\documentclass[10pt]{article}
\usepackage{latexsym,amsfonts,amssymb,amsthm,amsmath,mathrsfs,color,amscd,graphicx,fullpage,fancyhdr,parskip,hyperref}
\usepackage[francais,english]{babel}

\newcommand{\s}[1]{{\mathcal #1}}

\newcommand{\bb}[1]{{\mathbb #1}}

\newtheorem{theorem}{Theorem}[section]
\newtheorem{corollary}[theorem]{Corollary}

\newtheorem{lemma}[theorem]{Lemma}
\newtheorem{proposition}[theorem]{Proposition}
\newtheorem{problem}[theorem]{Problem}
\newtheorem{definition}[theorem]{Definition}

\newtheorem{remark}[theorem]{Remark}

\numberwithin{equation}{section}
\numberwithin{theorem}{section}

\definecolor{Red}{cmyk}{0,1,1,0.2}
\def\dys{\displaystyle}
\def\ep{\epsilon}

\def\rg{\rangle} 
\def\lg{\langle} 
\def\ds{\displaystyle}
\def\T{\bb{T}}
\newcommand{\be}{\begin{equation}}
\newcommand{\ee}{\end{equation}}
\newcommand{\R}{\mathbb R}

\setlength{\textwidth}{16cm}
\setlength{\textheight}{23cm}
\setlength{\topmargin}{-1cm}
\setlength{\oddsidemargin}{-1mm}
\setlength{\evensidemargin}{-1mm}
\raggedbottom

\title{Mean field games systems of first order}
\author{Pierre Cardaliaguet\thanks{Ceremade, Universit\'e Paris-Dauphine,
Place du Maréchal de Lattre de Tassigny, 75775 Paris cedex 16 (France)} \and
P. Jameson Graber\thanks{Commands team (ENSTA ParisTech, INRIA Saclay), 828, Boulevard des Mar\'echaux, 91762 Palaiseau Cedex, Email: philip.graber@inria.fr}}

\begin{document}

\maketitle

\begin{abstract}
We consider a system of mean field games with local coupling in the deterministic limit.
Under general structure conditions on the Hamiltonian and coupling, we prove existence and uniqueness of the weak solution, characterizing this solution as  the minimizer of some optimal control of Hamilton-Jacobi and continuity equations.
We also prove that this solution converges in the long time average to the solution of the associated ergodic problem.

Keywords: mean field games, Hamilton-Jacobi equations, optimal control, nonlinear PDE, transport theory, long time average. 
\end{abstract}


\section{Introduction} \label{sec:introduction}

Our purpose is to study the system
\begin{equation} \label{eq:mfg}\left\{
\begin{array}{cl}
(i) & -\partial_t\phi + H(x,D\phi) = f(x,m)  \\
(ii) & \partial_t m - \mathrm{div} \left(mD_p H(x,D\phi)\right) = 0 \\
(iii) & \phi(T,x) = \phi_T(x), m(0,x) = m_0(x).
\end{array}\right.
\end{equation}
System (\ref{eq:mfg}) is a model for first-order mean field games with local coupling.
Mean field games (MFG) were introduced simultaneously by Lasry and Lions \cite{lasry06,lasry06a,lasry07} and by Huang, Malham\'e, and Caines \cite{huang2006large,huang2007large} in order to study large population differential games.
The function $\phi$ in system (\ref{eq:mfg}) can be thought of as the value function for an average player seeking to optimize an objective functional,
while $m$ represents the time-evolving probability distribution of the state of the players.
The coupling between the two is represented here by the function $f(x,m)$.

The purpose of this article is to study the existence and uniqueness of weak solutions of the model (\ref{eq:mfg}) as well as their long time average behavior.
Structure conditions guaranteeing existence and uniqueness of solutions are already well-investigated in two general cases: the second order case with diffusion, and the first order case where the coupling is nonlocal and smoothing; see the discussion in Lasry and Lions \cite{lasry06a,lasry07} as well as the more recent contributions  by Gomes, Pimentel and S\'{a}nchez-Morgado \cite{GPSM1, GPSM2} and by Porretta \cite{Por13}.
Here the situation is one of first order equations with local coupling, about which much less is understood.
One approach, given in the lectures in \cite{lions07}, is to transform the system into a quasilinear elliptic equation in space time, thereby yielding smooth solutions.
However, this approach requires certain structure conditions (in particular to ensure that $m$ does not vanish) which we wish to abandon entirely.

The existence and uniqueness of weak solutions for this first order system under general structure conditions was studied in Cardaliaguet \cite{cardaliaguet2013weak} and in Graber  \cite{graber2013optimal}.
The approach, introduced by Benamou  and Brenier  \cite{bb} and carried on in Cardaliaguet, Carlier and Nazaret \cite{cardaliaguet2012geodesics} for optimal transport problems, was to characterize weak solutions in terms of minimizers for optimal control problems for some PDEs (Hamilton-Jacobi equations and transport equations).
We use a similar ideas in the present article, but we remove certain assumptions from \cite{cardaliaguet2013weak}.
In particular, the following two generalizations deserve emphasis:
\begin{itemize}
\item we completely dispense with hypothesis (H3) in \cite{cardaliaguet2013weak}, a strong restriction on the dependence of the Hamiltonian on space which would explicitly forbid, say, $H(x,D\phi) = c(x)|D\phi|^r$ for a positive continuous function $c$; for this we follow 
\cite{graber2013optimal}, where the analysis of mean field game system with local coupling and Hamiltonians positively homogeneous with respect to the gradient variable was performed;
\item we dispense with the growth assumption on $f(x,m)$ for $m$ near the origin; unlike in \cite{cardaliaguet2013weak, graber2013optimal}, we do not make the assumption $f(x,0) = 0$, and indeed we do not assume that $\lim_{m\to 0^+} f(x,m)$ is finite for all $x$.
\end{itemize}
Thus we allow for fairly general structure conditions, with the only major restriction being the relationship between the growth rate of the Hamiltonian and the coupling (Equation (\ref{eq:conqr})). Note that conditions linking the growth on the Hamiltonian to the growth of the coupling are fairly standard in mean field game theory (see, e.g., \cite{lasry07, GPSM1, GPSM2, Por13}). 
We prove the existence of solutions in an appropriately defined weak sense, characterizing the minimizers of two optimal control problems which are in duality (see Section \ref{sec:optimal_control}). 

Our second main result concerns the long time average of the solution of the mean field game system. Following standard arguments in control theory, one expects that, as horizon $T$ tends to infinity, the value function $\phi$ converges to the value of an ergodic control problem, while the measure $m$ stabilizes to an invariant measure. The resulting system should be therefore an ergodic MFG system, as introduced by Lasry and Lions in \cite{lasry06a}: 
$$
\left\{\begin{array}{cl}
(i)& \overline \lambda +H(x,D\overline \phi) =f(x,\overline m(x))\\
(ii) & -{\rm div} (\overline mD_pH(x,\overline D\phi))=0\\
(iii)& \overline m\geq 0, \; \int_{\T^d} \overline m= 1
\end{array}\right.
$$
This intuition turns out to be essentially correct, at least under suitable conditions. The first results in this direction were established by Lasry and Lions in \cite{lions07} and then  extended and sharpened by Cardaliaguet, Lasry, Lions, Porretta \cite{cardaliaguet2012long, cardaliaguet2013long2}: in these references, the authors are concerned with  second order mean field game systems, namely systems involving stochastic control problems with a nondegenerate diffusion. The main conclusion is that the rescaled map $(s,x)\to \phi(sT,x)/T$ converges to the (constant in space) map $s\to \bar \lambda(1-s)$ in $(0,1)\times \T^d$ while the (rescaled) map $(s,x)\to m(sT,x)$ converges to the (constant in time) map $x\to \bar m(x)$. Moreover  this last convergence holds at an exponential rate. Similar results in the discrete setting were also obtained by Gomes, Mohr and Souza in \cite{GMS10}. This long time average behavior is similar for first order MFG systems with nonlocal coupling  \cite{cardaliaguet2013long}, which correspond to systems of the form \eqref{MFG} in which the map $f$ is a nonlocal (and smoothing) function of the measure $m(t)$. The difficulty is then to define a notion of solution for the ergodic problem, since in this setting the measure $\bar m$ can have (and usually has) a singular part: this problem is overcome by using ideas from weak KAM theory.

In the present paper we address the same issue for the first order MFG systems with local coupling. The construction of solutions can be obtained as for the time-dependent problem. This construction was actually carried out by Evans in \cite{Ev} for the coupling $f(m)=\ln(m)$, where smoothness of the solution is established as well. Under our general assumptions, we cannot hope to obtain smooth solutions, but we show the existence and uniqueness of weak solutions. 

To prove the convergence of the time dependent system \eqref{eq:mfg} to the ergodic one, we  extend a classical energy inequality introduced by Lasry and Lions \cite{lasry07}  to our weak solutions (see Proposition \ref{prop:energie}): this provides the convergence of $m$. The convergence of $\phi/T$ is more subtle. Indeed, in \cite{cardaliaguet2012long, cardaliaguet2013long2}, it relies on the fact that $m$ does not vanish for second order MFG systems; in \cite{cardaliaguet2013long}, we used the comparison  principle for Hamilton-Jacobi equations (HJ equations). In the present context, the measure $m$ is expected to vanish and the notion of solution is too weak to allow for a comparison argument.
To overcome this issue, we  pass to the limit in the optimization problem that characterizes weak solutions of \eqref{eq:mfg}.


The outline of the paper is as follows.
In the following subsection, we list the basic assumptions which hold for all of our main results.
Then in Section \ref{sec:optimal_control} we present two optimal control problems, one for the Hamilton Jacobi equation and the other for the continuity equation, whose minimizers are characterized by weak solutions to the mean field games system (\ref{eq:mfg}).
Section \ref{sec:weak_solutions} is devoted to the study of weak solutions, in particular their existence and uniqueness.
In Section \ref{sec:2opti}, we study weak solutions for the corresponding {\em ergodic problem.}
Finally, in Section \ref{sec:asymp}, we study the link between the time-dependent and ergodic problems.

\bigskip
{\bf Acknowledgement: }  
This work has been partially supported by the Commission of the
European Communities under the 7-th Framework Programme Marie
Curie Initial Training Networks   Project SADCO,
FP7-PEOPLE-2010-ITN, No 264735, and by the French National Research Agency
 ANR-10-BLAN 0112 and ANR-12-BS01-0008-01.

\subsection{Notation and assumptions} 
\label{sec:assumptions}

{\bf Notation:} We denote by 
 $\lg x, y\rg$ the Euclidean scalar product  of two vectors $x,y\in\R^d$ and by
$|x|$ the Euclidean norm of $x$.  We work in the flat $d-$dimensional torus $\bb{T}^d=\R^d\backslash \mathbb Z^d$. For $k,n\in \mathbb N$ and $T>0$, we denote by ${C}^k([0,T]\times \bb{T}^d, \R^n)$ the space of maps $\phi=\phi(t,x)$ of class ${C}^k$ in time and space with values in $\R^n$. For $p\in [1,\infty]$ and $T>0$, we denote by $L^p(\bb{T}^d)$ and $L^p((0,T)\times \bb{T}^d)$ the set of $p-$integrable maps over $\bb{T}^d$ and $[0,T]\times \bb{T}^d$ respectively. We often abbreviate $L^p(\bb{T}^d)$ and $L^p((0,T)\times \bb{T}^d)$ into  $L^p$. We denote by $\|f\|_p$ the $L^p-$norm of a map $f\in L^p$. For $f\in L^1((0,1)\times \bb{T}^d)$, we define $\lg f(t)\rg$ to be the (a.e. defined) quantity $\int_{\bb{T}^d}f(t,x)dx$.

If $\mu$ is a vector measure over $\bb{T}^d$ or $[0,T]\times \bb{T}^d$, we denote by $\mu^{ac}$ and $\mu^s$ the decomposition of $\mu$ in absolutely continuous part and singular part with respect to the Lebesgue measure. Recall that $\mu=\mu^{ac}+\mu^s$. For simplicity, if $\phi\in BV$ over $[0,T]\times \bb{T}^d$,  we abbreviate the notation $(\partial_t\phi)^{ac}$  into  $\partial_t\phi^{ac}$. \\

{\bf Assumption: } We now list the various conditions needed on the data of the problem. These assumptions are in force throughout the paper. 

\begin{enumerate}
\item (Conditions on the initial and final conditions) $m_0$ is a probability measure on $\bb{T}^d$ which is absolutely continuous with respect to Lebesgue measure, having density which we also call $m_0$ in $C(\bb{T}^d)$. We suppose moreover that $m_0>0$ on $\bb{T}^d$. 
We  assume that $\phi_T:\T^d\to \R$ is a Lipschitz continuous function on $\bb{T}^d$.
\item (Conditions on the Hamiltonian) $H:\bb{T}^d \times \bb{R}^d \to \bb{R}$ is continuous in both variables, convex and differentiable in the second variable, with $D_pH$ continuous in both variables. Moreover, $H$ has superlinear growth in the gradient variable: there exist $r >1 $ and $C >0$ such that
\begin{equation}
\label{eq:hamiltonian_bounds}
\frac{1}{rC}|p|^r-C \leq H(x,p) \leq \frac{C}{r}|p|^r + C,
\end{equation}
We denote by $H^*(x,\cdot)$ the Fenchel conjugate of $H(x,\cdot)$, which, due to the above assumptions, satisfies
\begin{equation}
\label{eq:hamiltonian_conjugate_bounds}
\frac{1}{r'C}|q|^{r'}-C \leq H^*(x,q) \leq \frac{C}{r'}|q|^{r'} + C,
\end{equation}
where $r'$ is the conjugate of $r$. 
We will also denote by $L$ the Lagrangian given by $L(x,q) = H^*(x,-q)$, which thus satisfies the same bounds as $H^*$.
\item (Conditions on the coupling) Let $f$ be continuous on $\bb{T}^d \times (0,\infty)$, strictly increasing in the second variable, satisfying
\begin{equation} \label{eq:coupling_growth}
\frac{1}{C}|m|^{q-1} - C \leq f(x,m) \leq C|m|^{q-1} + C ~~ \forall ~ m \geq 1.
\end{equation}
\item The  relation holds between the growth rates of $H$ and of $F$: 
\be\label{eq:conqr}
r > \max\{d(q-1),1\}.
\ee 
\end{enumerate}
Note that condition \eqref{eq:conqr} implies that the growth of $f$ (of order $q-1$) has to be much smaller than the growth of $H$ (of order $r$). 

We define $F$ so that $F(x,\cdot)$ is a primitive of $f(x,\cdot)$ on $(0,\infty)$, that is,
\begin{equation}
F(x,m) = \int_1^m f(x,s)ds, ~~ \forall ~ m > 0.
\end{equation}
It follows that $F$ is continuous on $\bb{T}^d \times (0,\infty)$, is strictly convex and differentiable in the second variable, and satisfies the growth condition
\begin{equation} \label{eq:cost_growth}
\frac{1}{qC}|m|^q - C \leq F(x,m) \leq \frac{C}{q}|m|^q + C~~~ \forall ~ m \geq 1.
\end{equation}
For $m < 0$ we set $F(x,m) = +\infty$. 
We denote by $F(x,0)$ the limit $\lim_{m \to 0^+} F(x,m)$, which may be finite or $+\infty$ (see Remark \ref{rem:F} below).

We will denote throughout the conjugate exponent of $q$ by $p = q^*$. Define $F^*(x,\cdot)$ to be the Fenchel conjugate of $F(x,\cdot)$ for each $x$. Note that
\begin{equation} \label{eq:cost_growth_star}
\frac{1}{pC}|a|^p - C \leq F^*(x,a) \leq \frac{C}{p}|a|^p + C, ~~ \forall a \geq 0.
\end{equation}

\begin{remark}\label{rem:F}{\rm Note that our assumptions imply that $F$ is bounded below on $\bb{T}^d\times (0,+\infty)$: indeed, by \eqref{eq:cost_growth},  $F$ is uniformly coercive at infinity, so that we only have to worry about $m\in (0,1)$. For such $m$'s, we have by convexity of $F$: 
$$
F(x,m)\geq F(x,1)+f(x,1)(m-1)\geq \min_{\bb{T}^d}F(\cdot,1) -\max_{\bb{T}^d}f(\cdot,1).
$$
Finally, note that, since $F(x,m)=+\infty$ for $m<0$, $F^*(x,a)= \sup_{m\geq 0} ma -F(x,m)$ is nondecreasing. 
}\end{remark} \medskip

\begin{remark}{\rm 
Our assumptions do not prohibit the possibility $\lim_{m \to 0^+} f(x,m) = -\infty$,
which creates special difficulties in proving the existence of solutions, as we will see in the proof of Theorem \ref{thm:existence}.
By way of comparison, let us review what happens if we posit that $\lim_{m \to 0^+} f(x,m)$ is finite.
Then without loss of generality we may suppose it is zero (otherwise change $f$ by a continuous function depending only on $x$, which does not change the assumptions on its growth in $m$).
In this case, as we analyze the optimal control of the Hamilton-Jacobi equation (see Section \ref{sec:optimal_control} below), the right-hand side can be assumed positive, which turns out to be a boon for regularity of the solution.
That is, the solution turns out to be H\"{o}lder continuous thanks to nothing more than the $L^q$ regularity of the right-hand side (the ``control").
Such a result has been proved in \cite{cardaliaguet2012holder} and used in \cite{cardaliaguet2013weak} to construct solutions of systems of the form (\ref{eq:mfg}) for which $\phi$ is a continuous viscosity solution to the Hamilton-Jacobi equation.
Since we no longer have this assumption, we will be forced to look for solutions with lower regularity.
}\end{remark}

\section{Optimal control problems}
\label{sec:optimal_control}

Throughout this section we study several optimal control problems: we will see in the next section that the MFG system \eqref{eq:mfg} is the system of optimality conditions for these problems. 

\subsection{Optimal control of HJ equations}

Denote by $\s{K}_0$ the set of maps $\phi \in C^1([0,T] \times \bb{T}^d)$ such that $\phi(T,x) = \phi_T(x)$. Then define on $\s{K}_0$ the functional
\begin{equation}
\s{A}(\phi) = \int_0^T \int_{\bb{T}^d} F^*(x,-\partial_t \phi + H(x,D\phi)) dx dt - \int_{\bb{T}^d} \phi(0,x) dm_0(x) .
\end{equation}
Then we have our first optimal control problem. \medskip
\begin{problem}[Optimal control of HJ] \label{pr:smooth}
Find $\inf_{\phi \in \s{K}_0} \s{A}(\phi)$.
\end{problem} \medskip

We now look at the dual problem. Define $\s{K}_1$ to be the set of all pairs $(m,{\bf w}) \in L^1((0,T) \times \bb{T}^d) \times L^1((0,T) \times \bb{T}^d;\bb{R}^d)$ such that $m \geq 0$ almost everywhere, $\int_{\bb{T}^d}m(t,x)dx = 1$ for a.e. $t \in (0,T)$, and
\begin{align*}
\partial_t m + \mathrm{div}~(w) &= 0\\
m(0,\cdot) &= m_0(\cdot)
\end{align*}
in the sense of distributions. Because of the integrability assumption on $\bf w$, it follows that $t \mapsto m(t)$ has a unique representative such that $\int m(t)\phi$ is continuous on $[0,T]$ for all $\phi \in C^\infty(\bb{T}^d)$ (cf. \cite{ambrosio08}). It is to this representative that we refer when we write $m(t)$, and thus $m(t)$ is well-defined for all $t \in [0,T]$.

Define a functional
\begin{equation}\label{eq:dual}
\s{B}(m,w) = \int_{\bb{T}^d} \phi_T(x)m(T,x)dx + \int_0^T \int_{\bb{T}^d} m(t,x)L\left(x,\frac{w(t,x)}{m(t,x)}\right) + F(x,m(t,x)) dx dt
\end{equation}
on $\s{K}_1$. Recall that $L$ is defined juste after \eqref{eq:hamiltonian_conjugate_bounds}.  We follow the convention that
\begin{equation}\label{conventionH*}
mH^*\left(x,-\frac{w}{m}\right) = \left\{ \begin{array}{ll} \infty & \mbox{\rm if $m=0$ and $w\neq 0$} \\ 0 & \mbox{\rm if $m=0$ and $w = 0$} \end{array} \right.
\end{equation}
Since $m \geq 0$, the second integral in (\ref{eq:dual}) is well-defined in $(-\infty,\infty]$ by the assumptions on $F$ and $L$ (see also Remark \ref{rem:F}). The first integral is well-defined and necessarily finite by the continuity of $\phi_T$ and the fact that $m(T,x)dx$ is a probability measure.

We next state the ``dual problem" succinctly as \medskip
\begin{problem}[Dual Problem] \label{pr:dual}
Find $\inf_{(m,w) \in \s{K}_1} \s{B}(m,w)$.
\end{problem} \medskip

The main result of this section is \medskip
\begin{theorem}
\label{thm:dual} Problems \ref{pr:smooth} and \ref{pr:dual} are in duality, i.e.
\begin{equation} \label{eq:duality}
\inf_{\phi \in \s{K}_0} \s{A}(\phi) = -\min_{(m,w) \in \s{K}_1} \s{B}(m,w)
\end{equation}
Moreover, the minimum on the right-hand side is achieved by a unique pair $(m,w) \in \s{K}_1$ which must satisfy $m \in L^q((0,T) \times \bb{T}^d)$ and $w \in L^{\frac{r'q}{r'+q-1}}((0,T) \times \bb{T}^d)$.
\end{theorem}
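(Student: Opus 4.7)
My plan is to identify Problems~\ref{pr:smooth} and~\ref{pr:dual} as a Fenchel--Rockafellar dual pair, then to use the direct method on the dual side to produce a minimizer with the stated integrability. Concretely, I set $E_1=C^1([0,T]\times\bb{T}^d)$ and $E_2=C^0([0,T]\times\bb{T}^d)\times C^0([0,T]\times\bb{T}^d;\R^d)$, let $\Lambda\phi=(\partial_t\phi,D\phi)$, and define the convex functionals
$$\s{F}(a,b)=\int_0^T\!\!\int_{\bb{T}^d} F^*\bigl(x,-a+H(x,b)\bigr)\,dx\,dt,\qquad \s{G}(\phi)=-\int_{\bb{T}^d}\phi(0,x)\,dm_0(x)+\iota_{\{\phi(T,\cdot)=\phi_T\}}(\phi),$$
where $\iota$ is the convex indicator, so that $\s{A}(\phi)=\s{F}(\Lambda\phi)+\s{G}(\phi)$. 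Polynomial growth of $F^*$ together with continuity of $H$ makes $\s{F}$ continuous on $E_2$, so at any $C^1$ extension $\phi_0$ of $\phi_T$ one has $\s{G}(\phi_0)<\infty$ and $\s{F}$ continuous at $\Lambda\phi_0$; this is the qualification hypothesis for Fenchel--Rockafellar.

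A direct computation of $\s{F}^*$---substituting $c=-a+H(x,b)$ to separate the $c$-- and $b$--suprema---shows that $\s{F}^*(\xi)=+\infty$ unless $\xi\in E_2^*$ is absolutely continuous of the form $\xi=(-m,-w)$ with $m\ge 0$, in which case
$$\s{F}^*(-m,-w)=\int_0^T\!\!\int_{\bb{T}^d}\bigl(m\,L(x,w/m)+F(x,m)\bigr)\,dx\,dt,$$
with convention~\eqref{conventionH*}. An integration by parts in $\langle\Lambda\phi,\xi\rangle$ identifies $\s{G}^*(-\Lambda^*\xi)$ as $\int_{\bb{T}^d}\phi_T\,m(T)$ when $(m,w)\in\s{K}_1$ and $+\infty$ otherwise. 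Assembling these pieces, Fenchel--Rockafellar yields \eqref{eq:duality}.

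To produce the minimizer of $\s{B}$, I apply the direct method on $\s{K}_1$. For a minimizing sequence $(m_n,w_n)$, the lower bounds in~\eqref{eq:hamiltonian_conjugate_bounds} and~\eqref{eq:cost_growth} control
$$\|m_n\|_{L^q}^q\le C\bigl(1+\s{B}(m_n,w_n)\bigr),\qquad \int_0^T\!\!\int_{\bb{T}^d}\frac{|w_n|^{r'}}{m_n^{r'-1}}\,dx\,dt\le C\bigl(1+\s{B}(m_n,w_n)\bigr),$$
and a H\"older inequality combining these two bounds, with exponents chosen so the $m$-power matches $q$, yields $w_n$ bounded in $L^{r'q/(r'+q-1)}$. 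Extract weak limits $(m,w)$ in these spaces; the linear constraint and the initial condition pass to the limit. The joint weak lower semicontinuity of $(m,w)\mapsto \int m\,L(x,w/m)$ under this convergence is Ioffe's theorem for the convex normal integrand with superlinear growth in $w$; the term $\int F(x,m)$ is $L^q$-lsc by convexity. Thus $(m,w)$ is a minimizer with the claimed regularity. Uniqueness then follows from strict convexity: strict monotonicity of $f(x,\cdot)$ forces $F(x,\cdot)$ strictly convex and so $m$ is unique; where $m>0$, strict convexity of the perspective $m\,L(x,w/m)$ in $w$ (inherited from strict convexity of $L=H^*$, which follows from differentiability of $H$) fixes $w$; and on $\{m=0\}$ convention~\eqref{conventionH*} forces $w=0$.

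The main technical hurdle I anticipate is the lower semicontinuity of the perspective functional under weak convergence, since $m$ may vanish on a set of positive measure and the integrand takes the value $+\infty$ on $\{m=0,\,w\ne 0\}$---so one cannot simply pass pointwise to the limit. Ioffe's integrand theorem handles this cleanly, but its hypotheses must be reconciled with the available compactness; condition~\eqref{eq:conqr} is precisely what makes the extracted $L^{r'q/(r'+q-1)}$-bound on $w$ operative and hence what makes the weak limit genuinely a function rather than merely a measure.
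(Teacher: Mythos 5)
Your proof is correct and its core is the same as the paper's: both deduce the duality identity \eqref{eq:duality} from the Fenchel--Rockafellar theorem (your explicit setup of $\Lambda$, $\s{F}$, $\s{G}$ and the computation of their conjugates is exactly the argument the paper delegates to \cite{ekeland1976convex,cardaliaguet2012geodesics,cardaliaguet2013weak}), and both extract $m\in L^q$ from the finiteness of $\s{B}$ and \eqref{eq:cost_growth}, $\frac{|w|^{r'}}{m^{r'-1}}\in L^1$ from \eqref{eq:hamiltonian_conjugate_bounds}, then H\"older to get $w\in L^{r'q/(r'+q-1)}$, with uniqueness from strict convexity of $F(x,\cdot)$ and $L(x,\cdot)$.

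The one genuine divergence is how you obtain attainment. The Fenchel--Rockafellar theorem, under the qualification hypothesis you verify, already yields existence of a minimizer of the dual problem as part of its conclusion; that is what the paper uses (implicitly) and why its proof goes straight to deriving the integrability of the minimizer. You instead rerun a direct method on $\s{K}_1$: a priori bounds along a minimizing sequence, weak compactness in $L^q\times L^{r'q/(r'+q-1)}$, and Ioffe-type lower semicontinuity of the perspective functional. This is a valid alternative and has the pedagogical virtue of showing the compactness explicitly, but it is strictly more work than needed given \eqref{eq:conqr}; also note that passing the boundary term $\int\phi_T\,m_n(T)$ to the limit (and preserving $m(0)=m_0$, $\int m(t)=1$) deserves a sentence, using the fact that $w_n$ bounded in $L^s$, $s>1$, gives weak continuity of $t\mapsto m_n(t)$ with a modulus uniform in $n$.

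Minor remark: when computing $\s{F}^*$ you correctly observe that the conjugate is $+\infty$ off the absolutely continuous dual pairs with $m\ge 0$; the sign constraint relies on $F^*(x,\cdot)$ being nondecreasing, which is Remark~\ref{rem:F} in the text and worth citing rather than taking for granted.
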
 \medskip


\begin{proof}[Proof of Theorem \ref{thm:dual}]
Proving Equation (\ref{eq:duality}) is simply an application of the Fenchel-Rockafellar theorem (see \cite{ekeland1976convex}) as in \cite{cardaliaguet2012geodesics} and \cite{cardaliaguet2013weak}. To see the rest, we suppose that $(m,w) \in \s{K}_1$ minimizes $\s{B}$. Since $L$ satisfies the growth conditions (\ref{eq:hamiltonian_conjugate_bounds}) we have
\begin{equation}
\int_0^T \int_{\bb{T}^d} \frac{1}{C}\frac{|w|^{r'}}{m^{r'-1}} - C ds dt \leq \int_0^T \int_{\bb{T}^d} mL\left(x,\frac{w}{m}\right) ds dx.
\end{equation}
On the other hand, since $F(x,\cdot)$ is bounded below (see Remark \ref{rem:F}),
\begin{equation}
\s{B}(m,w) \geq -C + \int_0^T \int_{\bb{T}^d} mL\left(x,\frac{w}{m}\right) ds dx
\end{equation}
for some $C > 0$. Since $\s{B}(m,w)$ must be finite, it follows that $\displaystyle \frac{|w|^{r'}}{m^{r'-1}} \in L^1$. Now we use the growth conditions (\ref{eq:cost_growth}) on $F$ to deduce
\begin{equation}
\int_0^T \int_{\bb{T}^d} |m|^q dx dt \leq C(T+1) + C\int_0^T \int_{\bb{T}^d} F(x,m) dx dt.
\end{equation}
Again appealing to (\ref{eq:cost_growth_star}) we have that
\begin{equation}
\s{B}(m,w) \geq - C + \int_0^T \int_{\bb{T}^d} F(x,m) ds dx \geq - C + \frac{1}{C}\int_0^T \int_{\bb{T}^d} |m|^q dx dt
\end{equation}
for large enough $C$. Hence $m \in L^q$. Using H\"{o}lder's inequality we get that $w \in L^{\frac{r'q}{r'+q-1}}((0,T) \times \bb{T}^d)$. Uniqueness comes from the fact that $F(x,\cdot)$ and $L(x,\cdot)$ are strictly convex.

\end{proof}

\subsection{Relaxation of Problem \ref{pr:smooth}}
\label{sec:relaxed}

We do not expect Problem \ref{pr:smooth} to have a solution. For this reason we introduce a new problem on a wider class of functions $\s{K}$ and show in Proposition \ref{prop:relaxed} that it is the relaxation of Problem \ref{pr:smooth}.  \medskip

\begin{definition}[Relaxed set] \label{def:relaxed_space}
The set $\s{K}$ will be defined as the set of all pairs $(\phi,\alpha) \in BV \times L^1$ such that $D\phi \in L^r((0,T) \times \bb{T}^d)$,
$\phi(T,\cdot) \leq  \phi_T$ in the sense of traces, $\alpha_+ \in L^p((0,T) \times \bb{T}^d)$, $\phi \in L^\infty((t,T)\times\bb{T}^d)$ for every $t \in (0,T)$, and
\begin{equation} \label{eq:inequality_in_distribution}
-\partial_t \phi + H(x,D\phi) \leq \alpha.
\end{equation}
in the sense of distribution.
\end{definition} \medskip
We remark that $\s{K}$ is a convex subset of the space $BV \times L^1$, and that for $(\phi,\alpha) \in \s{K}$, $H(x,D\phi) \in L^1$ by the bounds in (\ref{eq:hamiltonian_bounds}). \medskip  
\begin{problem}[Relaxed Problem] \label{pr:relaxed}
Find 
$$\inf_{(\phi,\alpha) \in \s{K}} \s{A}(\phi,\alpha) = \inf_{(\phi,\alpha) \in \s{K}} \int_0^T \int_{\bb{T}^d} F^*(x,\alpha(t,x))dx dt - \int_{\bb{T}^d} \phi(0,x)m_0(x)dx.$$
\end{problem} \medskip

Note that $\s{A}(\phi,\alpha)$ is well-defined in $(-\infty,+\infty]$ for any $(\phi,\alpha) \in \s{K}$: indeed, $\phi(0, \cdot)$ is well defined in $L^1$ in the sense of trace and, if we set $K_1:= \max_{\bb{T}^d} F(\cdot,1)$, then we have $F^*(x,a)\geq a-K_1$, so that, as $\alpha\in L^1$,  
$$
 \int_0^T \int_{\bb{T}^d} F^*(x,\alpha(t,x))dx dt \geq  \int_0^T \int_{\bb{T}^d} \alpha(t,x)dx dt- TK_1\ >\ -\infty. 
$$
The main result of this section is the following: \medskip

\begin{proposition} \label{prop:relaxed}
The relaxed problem \ref{pr:relaxed} is in duality with \ref{pr:dual}, i.e. \begin{equation}
\inf_{(\phi,\alpha) \in \s{K}} \s{A}(\phi,\alpha) = - \min_{(m,w) \in \s{K}_1} \s{B}(m,w).
\end{equation}
Equivalently, the infimum appearing in Problem \ref{pr:relaxed} is the same as that appearing in Problem \ref{pr:smooth}.
\end{proposition}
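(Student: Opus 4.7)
The displayed duality identity follows from Theorem \ref{thm:dual} once the secondary claim $\inf_{\s{K}}\s{A} = \inf_{\s{K}_0}\s{A}$ is established, so I focus on this equality of infima.

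The inequality $\inf_{\s{K}}\s{A} \leq \inf_{\s{K}_0}\s{A}$ is immediate: given $\phi \in \s{K}_0$, setting $\alpha := -\partial_t\phi + H(x,D\phi)$ yields $(\phi,\alpha) \in \s{K}$ with the trace condition and (\ref{eq:inequality_in_distribution}) holding as equalities and with matching values of $\s{A}$.

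The reverse inequality is the heart of the matter and requires an approximation/relaxation argument. Given $(\phi,\alpha) \in \s{K}$ with $\s{A}(\phi,\alpha) < +\infty$, the plan is to build a sequence $\phi_n \in \s{K}_0$ with $\limsup_n \s{A}(\phi_n) \leq \s{A}(\phi,\alpha)$. The construction proceeds in four stages.
\emph{Stage one (lower truncation):} replace $\phi$ by $\phi^N := \max(\phi,-N)$ and $\alpha$ by $\alpha^N := \alpha\,\mathbf{1}_{\{\phi > -N\}} + C\,\mathbf{1}_{\{\phi \le -N\}}$, with $C$ the upper bound from (\ref{eq:hamiltonian_bounds}); convexity of $H$ shows $(\phi^N,\alpha^N) \in \s{K}$, and since $F^*(x,\alpha^N) \leq F^*(x,\max(\alpha,C))$ is integrable by (\ref{eq:cost_growth_star}) and $\alpha_+ \in L^p$, dominated convergence together with the BV-trace of $\phi(0,\cdot)$ in $L^1$ and the continuity of $m_0$ give $\s{A}(\phi^N,\alpha^N) \to \s{A}(\phi,\alpha)$.
\emph{Stage two (terminal patching):} on the slab $[T-\eta, T]$, replace $\phi^N$ by a linear-in-time interpolation between $\phi^N(T-\eta,\cdot)$ and $\phi_T$, updating $\alpha^N$ so as to preserve the distributional inequality; since $\phi^N$ and $\phi_T$ are now both bounded and Lipschitz in $x$, the resulting contribution to $\int F^*(\cdot,\alpha)$ is $O(\eta)$.
\emph{Stage three (mollification):} set $\phi_\epsilon := \tilde\phi \star \rho_\epsilon$ for a smooth symmetric mollifier $\rho_\epsilon$ in space and time. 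Convexity of $H(x,\cdot)$, joint continuity of $H$ in $(x,p)$, and $D\tilde\phi \in L^r$ imply through Jensen's inequality
\begin{equation*}
-\partial_t \phi_\epsilon + H(x, D\phi_\epsilon) \leq \tilde\alpha \star \rho_\epsilon + \omega(\epsilon),
\end{equation*}
for some modulus $\omega(\epsilon) \to 0$.
\emph{Stage four (final correction):} add a small affine-in-$t$ function vanishing at $t=0$ so the corrected function belongs to $\s{K}_0$; this perturbs $\int F^*$ by an amount vanishing with $\epsilon, \eta$. Passing to the limit $\epsilon \to 0$, then $\eta \to 0$ and $N \to \infty$ along a diagonal sequence, using continuity of $F^*$ and the integrability $\alpha_+ \in L^p$ inherited from $\s{A}(\phi,\alpha) < +\infty$, yields the desired inequality.

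The main obstacle is stage two, the terminal patching: the condition $\phi(T,\cdot) \leq \phi_T$ is only a BV-trace statement, so engineering an extension/interpolation for which $H(x, D\tilde\phi)$ is integrable on the transition slab while the added cost vanishes as $\eta \to 0$ requires care. A secondary difficulty is that $\phi$ need not be bounded near $t=0$, which is exactly why the lower truncation of stage one must precede the mollification of stage three; it is also what forces the order of limits, since the mollifier kernel size $\epsilon$ must be sent to zero before the truncation parameter $N$ is sent to infinity, so that the modulus $\omega(\epsilon)$ can be taken on sets where $\phi^N$ has been stabilized.
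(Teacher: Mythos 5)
Your easy direction matches the paper exactly, but the hard direction is attacked by a route that the paper deliberately avoids, and your route has genuine gaps.

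The paper does not approximate an arbitrary $(\phi,\alpha) \in \s{K}$ by elements of $\s{K}_0$. Instead it sidesteps all regularization by exploiting the already-proved strong duality of Theorem \ref{thm:dual}: taking $(m,w)$ a minimizer of $\s{B}$, it shows directly that $\s{A}(\phi,\alpha) \geq -\s{B}(m,w)$ for every $(\phi,\alpha) \in \s{K}$, using Lemma \ref{lem:integration_by_parts} (the integration-by-parts inequality for $\s{K} \times \s{K}_1$) together with the Fenchel inequality $F^* (x,\alpha) + F(x,m) \geq \alpha m$. This gives $\inf_{\s{K}} \s{A} \geq -\min \s{B} = \inf_{\s{K}_0} \s{A}$ in three lines, with no mollification and no terminal-patching to justify.

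Your approximation argument, by contrast, has at least two concrete gaps. First, in Stage two you assert that after the lower truncation, ``$\phi^N$ and $\phi_T$ are now both bounded and Lipschitz in $x$''; this is false. Truncating the \emph{values} of $\phi$ at $-N$ bounds $\phi^N$ but leaves $D\phi^N = D\phi \,\mathbf{1}_{\{\phi > -N\}}$, which is only in $L^r$, not $L^\infty$. Consequently the linear-in-time interpolant on the slab $[T-\eta,T]$ has a gradient that is merely $L^r$ in space, so $H(x,D\tilde\phi)$ is only $L^1$, and the positive part of the updated $\tilde\alpha$ need not be in $L^p$ as membership in $\s{K}$ requires; nor is the contribution of the slab to $\int F^*$ controllably $O(\eta)$. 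Second, the modulus $\omega(\epsilon)$ claimed in Stage three is not available under the paper's hypotheses: the only regularity assumed for $H$ in $x$ is joint continuity, and the discrepancy $\int \left[H(x,D\phi(\tau,y)) - H(y,D\phi(\tau,y))\right]\rho_\epsilon(t-\tau,x-y)\,d\tau dy$ is controlled by a modulus of $H$ in $x$ \emph{uniform over the gradient variable}, which cannot be extracted when $D\phi$ is only $L^r$. (The paper faces an analogous commutator term $R_\epsilon$ in the proof of Lemma \ref{lem:integration_by_parts}, but there it is applied to $L(\cdot,w/m)$ with $m,w$ fixed and is handled by dominated convergence after establishing $|w|^{r'}/m^{r'-1} \in L^1$; the same trick is unavailable to you because $H(x,D\phi_\epsilon)$ is evaluated at the mollified gradient, inside a nonlinearity, not as a mollified scalar.) Because of these issues your diagonal argument does not close, and it is precisely to avoid them that the paper switches to a purely duality-based proof.
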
 \medskip

Before proving the proposition, let us start with a key consequence of Equation \eqref{eq:inequality_in_distribution}: \medskip

\begin{lemma} \label{lem:integration_by_parts}
Suppose $(\phi,\alpha) \in \s{K}$. If $(m,w) \in \s{K}_1$  is such that $m \in L^q$ and $mL(\cdot,\frac{w}{m}) \in L^1$, then $\alpha m$ is integrable and we have
\begin{equation} \label{eq:integration_by_parts}
\int_{\bb{T}^d} \phi(t,x)m(t,x)dx \leq \int_{\bb{T}^d} \phi(s,x)m(s,x)dx + \int_t^s \int_{\bb{T}^d} m(\tau,x)L\left(x,\frac{w(\tau,x)}{m(\tau,x)}\right) + \alpha(\tau,x)m(\tau,x)dx d\tau,
\end{equation} 
for almost every $0 < t < s < T$. Moreover, 
\begin{align}\label{eq:integration_by_parts2}
\int_{\bb{T}^d} \phi(0,x)m_0(x)dx &\leq \int_{\bb{T}^d} \phi(t,x)m(t,x)dx + \int_0^t \int_{\bb{T}^d} m(\tau,x)L\left(x,\frac{w(\tau,x)}{m(\tau,x)}\right) + \alpha(\tau,x)m(\tau,x)dx d\tau,\\ \label{eq:integration_by_parts3}
\int_{\bb{T}^d} \phi(t,x)m(t,x)dx &\leq \int_{\bb{T}^d} \phi_T(x)m(T,x)dx + \int_t^T \int_{\bb{T}^d} m(\tau,x)L\left(x,\frac{w(\tau,x)}{m(\tau,x)}\right) + \alpha(\tau,x)m(\tau,x)dx d\tau
\end{align}
for almost every $t \in (0,T)$.
\end{lemma}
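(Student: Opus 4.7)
The plan is to use $\phi$ as a test function in the weak continuity equation for $(m,w)$; since $\phi$ is only $BV$ and satisfies the Hamilton--Jacobi relation only in the sense of distributions, this has to be done via mollification. Let $\xi_\epsilon$ be a standard space--time mollifier, and extend $\phi$ smoothly past the endpoints (for instance by the $BV$ traces $\phi(0^+,\cdot)$ at $t\le 0$ and $\phi_T$ at $t\ge T$). Setting $\phi_\epsilon := \phi \ast \xi_\epsilon$, one has $\phi_\epsilon \in C^\infty$, $\phi_\epsilon \to \phi$ in $L^1$, $D\phi_\epsilon \to D\phi$ in $L^r$, and the pointwise inequality
\[
-\partial_\tau \phi_\epsilon + [H(\cdot,D\phi)]_\epsilon \leq \alpha_\epsilon, \qquad \alpha_\epsilon := \alpha \ast \xi_\epsilon.
\]

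Because $\phi_\epsilon$ is smooth and uniformly bounded on every compact sub-interval of $(0,T)$ (using $\phi \in L^\infty((t,T)\times\bb{T}^d)$ for every $t > 0$), we may use $\phi_\epsilon$ with a time cutoff as test function against $\partial_\tau m + \mathrm{div}(w) = 0$. For $0 < t < s < T$ this yields the equality
\[
\int_{\bb{T}^d}[\phi_\epsilon(s)m(s) - \phi_\epsilon(t)m(t)]\,dx \;=\; \int_t^s\!\!\int_{\bb{T}^d} \bigl[m\,\partial_\tau \phi_\epsilon + D\phi_\epsilon\cdot w\bigr]\,dx\,d\tau.
\]
Substituting the mollified HJ lower bound for $\partial_\tau \phi_\epsilon$ and invoking the Fenchel--Young inequality $m H(x,D\phi_\epsilon) + D\phi_\epsilon\cdot w \geq -m L(x, w/m)$ (valid everywhere via convention \eqref{conventionH*}) gives
\[
\int_{\bb{T}^d}[\phi_\epsilon(s)m(s) - \phi_\epsilon(t)m(t)]\,dx \;\geq\; -\int_t^s\!\!\int m L\bigl(x,\tfrac{w}{m}\bigr)\,dx\,d\tau - \int_t^s\!\!\int m\,\alpha_\epsilon\,dx\,d\tau - R_\epsilon,
\]
where the error $R_\epsilon$ arises from the discrepancy between $H(x,D\phi_\epsilon)$ and $[H(\cdot,D\phi)]_\epsilon$; Jensen's inequality for $H(x,\cdot)$ together with continuity of $H$ in $x$ and $L^r$-integrability of $D\phi$ force $R_\epsilon \to 0$.

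The limit $\epsilon\to 0$ is the core of the argument. The LHS converges to $\int [\phi(s)m(s) - \phi(t)m(t)]\,dx$ for a.e.\ $t,s \in (0,T)$ thanks to $\phi \in L^\infty_{loc}$ on $(0,T]$ and $m(\tau) \in L^q$. To treat the $\alpha_\epsilon m$ term, write $\alpha = \alpha_+ - \alpha_-$: H\"older's inequality with $\alpha_+ \in L^p$ and $m \in L^q$ ($1/p+1/q=1$) gives $\int m(\alpha_+)_\epsilon\,dx\,d\tau \to \int m\alpha_+\,dx\,d\tau < \infty$, while Fatou's lemma along a subsequence of a.e.\ convergence gives $\liminf \int m(\alpha_-)_\epsilon\,dx\,d\tau \geq \int m\alpha_-\,dx\,d\tau$. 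Rearranging the resulting inequality simultaneously establishes \eqref{eq:integration_by_parts} \emph{and} forces $\int_t^s\!\!\int m\alpha_-\,dx\,d\tau < \infty$, so that $\alpha m \in L^1$.

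Finally, \eqref{eq:integration_by_parts2} and \eqref{eq:integration_by_parts3} follow from \eqref{eq:integration_by_parts} by sending $t \to 0^+$ and $s \to T^-$, using the $BV$ traces $\phi(0,\cdot)$ and $\phi(T,\cdot)$, the initial condition $m(0) = m_0$, weak continuity of $m$ up to $t = T$, and the trace inequality $\phi(T,\cdot) \leq \phi_T$. The main obstacle is the limit in the $\alpha_- m$ term, since a priori there is no $L^1$ control on this product; its integrability must be extracted \emph{a posteriori} from the limiting inequality itself through the Fatou step.
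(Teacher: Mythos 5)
Your plan genuinely diverges from the paper's: you mollify $\phi$ and keep $(m,w)$, whereas the paper mollifies $(m,w)$ and works with $\phi$ directly via $BV$ trace theory, testing the distributional HJ inequality against the smooth positive function $m_\epsilon$. Your interior argument is workable modulo a minor imprecision: the error $R_\epsilon$ you introduce splits as a Jensen term $[H(x,D\phi)]_\epsilon - H(x,D\phi_\epsilon)\geq 0$ (which has favorable sign but need not vanish) plus the $x$-translation term $[H(\cdot,D\phi)]_\epsilon - [H(x,D\phi)]_\epsilon$ (which is the one that tends to $0$); so saying ``$R_\epsilon\to 0$'' is not quite right, though the conclusion still holds. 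Also note that your route to $\alpha m\in L^1$ is different from the paper's (the paper first assumes $\alpha\in L^p$ and later truncates $\alpha_M=\alpha\vee(-M)$), but your Fatou argument on $(\alpha_-)_\epsilon$ does give $\limsup\int m\alpha_\epsilon\leq\int m\alpha$ and forces integrability on each interior slab.

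The genuine gap is in the endpoint inequalities \eqref{eq:integration_by_parts2}--\eqref{eq:integration_by_parts3}, which you claim follow by ``sending $t\to 0^+$ and $s\to T^-$''. This step fails for three compounding reasons. First, $\phi(t)\to\phi(0)$ only in $L^1(\bb{T}^d)$ (BV trace) while $m(t)\to m_0$ only in the narrow/weak sense; a product of an $L^1$-convergent sequence against a weakly convergent one need not converge, so $\int\phi(t)m(t)\,dx\to\int\phi(0)m_0\,dx$ is unjustified. Second, the definition of $\s{K}$ only gives $\phi\in L^\infty((t,T)\times\bb{T}^d)$ for $t>0$; $\phi$ may blow up as $t\to 0^+$, so $\phi(0,\cdot)$ is merely $L^1$ and there is no $L^p$--$L^q$ duality to invoke. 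Third, the mollified HJ inequality $-\partial_\tau\phi_\epsilon+[H(\cdot,D\phi)]_\epsilon\leq\alpha_\epsilon$ only holds on the $\epsilon$-interior of $(0,T)\times\bb{T}^d$; your ``extend $\phi$ smoothly past the endpoints'' does not preserve the subsolution property, so the estimate does not extend to $t\in\{0\}$ or $s\in\{T\}$. The paper overcomes this with two dedicated devices: for $s=T$, a uniform-in-$\epsilon$ modulus of continuity for $t\mapsto\int\phi_T m_\epsilon(t)$ combined with the weak continuity of $m$; for $t=0$, an anisotropic mollifier $\eta_\epsilon(t)\psi_\delta(x)$ with $\epsilon=\epsilon(\delta)$ chosen small relative to $\delta$, exploiting the assumed uniform continuity of $m_0$ to get $m_{\epsilon(\delta),\delta}(0)\to m_0$ uniformly. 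Without some version of these arguments, both \eqref{eq:integration_by_parts2}, \eqref{eq:integration_by_parts3} and the global integrability of $\alpha m$ on $(0,T)\times\bb{T}^d$ (which in the paper is extracted precisely from \eqref{eq:integration_by_parts2}) remain unproved.
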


\begin{proof} We first assume that $\alpha\in L^p$ and remove this assumption at the end of the proof. 
We first extend $(m,w)$ to $\bb{R}\times \bb{T}^d$ by setting $(m,w)=(m_0,0)$ on $(-\infty,0)\times \bb{T}^d$ and $(m,w)= (m(T),0)$ on $(T, +\infty)\times \bb{T}^d$. Note that we still have $\partial_t m+ {\mathrm{div}}~w = 0$ on $\bb{R}\times \bb{T}^d$.
Let $\xi$ be a standard convolution kernel on $\R\times \R^d$ such that $\xi > 0$ and let $\xi_\epsilon(t,x) := \xi((t,x)/\epsilon)/\epsilon^{d+1}$. Define $m_\epsilon := \xi_\epsilon \ast m$ and $w_\epsilon := \xi_\epsilon \ast w$. Then $m_\epsilon$ and $w_\epsilon$ are $C^\infty$ smooth, $m_\epsilon > 0$, and
\begin{equation}
\partial_t m_\epsilon + {\mathrm{div}}~w_\epsilon = 0.
\end{equation}
Fix $t, s \in (0,T)$ with $t<s$. Integrating against $\phi$ over $[t,s] \times \bb{T}^d$ we have
\begin{equation}
\int_{\bb{T}^d} \phi(s)m_\epsilon(s) - \phi(t)m_\epsilon(t) - \int_t^s \int_{\bb{T}^d} \partial_t \phi m_\epsilon + (D\phi,w_\epsilon) = 0.
\end{equation}
Here $\phi(s)$ and $\phi(t)$ are functions in $L^1(\bb{T}^d)$ based on trace theory, $\partial_t \phi$ is a signed Radon measure, and we recall that $D\phi$ is integrable by virtue of $(\phi,\alpha)$ belonging to $\s{K}$.
Note that
\begin{equation}
- \int_t^s \int_{\bb{T}^d} (D\phi,w_\epsilon) \leq  \int_t^s \int_{\bb{T}^d} H(x,D\phi)m_\epsilon + m_\epsilon L\left(x,\frac{w_\epsilon}{m_\epsilon}\right).
\end{equation}
Recalling that $-\partial_t \phi + H(x,D\phi) \leq \alpha$ in distribution, we deduce
\begin{equation}
\int_{\bb{T}^d} \phi(t)m_\epsilon(t) \leq \int_{\bb{T}^d} \phi(s)m_\epsilon(s) + \int_t^s \int_{\bb{T}^d} m_\epsilon L\left(x,\frac{w_\epsilon}{m_\epsilon}\right) + m_\epsilon \alpha.
\end{equation}
As $\epsilon \to 0$, we have that $m_\epsilon \to m$ in $L^q((0,T) \times \bb{T}^d)$, and in particular $m_\epsilon(\tau) \to m(\tau)$ in $L^q(\bb{T}^d)$ for almost every $\tau \in (0,T)$, while $m_\epsilon \alpha \to m \alpha$ in $L^1$ since $\alpha \in L^p$. Thus as $\phi(\tau) \in L^p(\bb{T}^d)$ for almost every $\tau \in (0,T)$, we get $\int\phi(\tau)m_\epsilon(\tau) \to \int \phi(\tau)m(\tau)$ for almost every $\tau \in (0,T)$. We assume in particular that this holds for $\tau = t,s$. Then
\begin{equation}
\int_{\bb{T}^d} \phi(t)m(t) \leq \int_{\bb{T}^d} \phi(s)m(s) + \limsup_{\epsilon \to 0} \int_t^s \int_{\bb{T}^d} m_\epsilon L\left(x,\frac{w_\epsilon}{m_\epsilon}\right) + m \alpha.
\end{equation}

To conclude, we just need to show  that
\begin{equation} \label{eq:limit_L}
\lim_{\epsilon \to 0}  \int_t^s \int_{\bb{T}^d} m_\epsilon L\left(x,\frac{w_\epsilon}{m_\epsilon}\right) =  \int_t^s \int_{\bb{T}^d} m L\left(x,\frac{w}{m}\right).
\end{equation}
Since the map $(m,w)\to mL(x, w/m)$ is lower semicontinuous and bounded below, Fatou Lemma implies 
$$
\liminf_{\epsilon \to 0}  \int_t^s \int_{\bb{T}^d} m_\epsilon L\left(x,\frac{w_\epsilon}{m_\epsilon}\right) \geq  \int_t^s \int_{\bb{T}^d} m L\left(x,\frac{w}{m}\right).
$$
 The difficult (and useful) inequality is the opposite one. We first note that 
\be\label{hjbkgdv}
 \int_t^s \int_{\bb{T}^d} \xi_\ep \ast \left(m L\left(\cdot,\frac{w}{m}\right)\right)(x)dxdr
 = 
 \int_t^s \int_{\bb{T}^d} \xi_\ep \ast \left(m L\left(x,\frac{w}{m}\right)\right)(x)dxdr
 +R_\ep
 \ee
 where $\ds  R_\ep=  \int_t^s \int_{\bb{T}^d} \zeta_\ep(r,x)dxdr, $ with
$$
\zeta_\ep(r,x):= \int_\R\int_{\bb{\R^d}} \xi_\ep(r-\tau, x-y)\left(m(\tau,y)L\left(y,\frac{ w(\tau,y)}{m(\tau,y)}\right)
 -m(\tau,y)L\left(x,\frac{w(\tau,y)}{m(\tau,y)}\right) \right)dyd\tau
 $$
As $\ep\to 0$, the left-hand side of \eqref{hjbkgdv} converges to $\ds  \int_t^s \int_{\bb{T}^d} m L\left(\cdot,\frac{w}{m}\right)$. On another hand, by the convexity of the map $(m,w)\to mL(x, w/m)$, we have 
 $$
 \int_t^s \int_{\bb{T}^d} \xi_\ep\ast\left(m L\left(x,\frac{w}{m}\right)\right)(x)dxdr
\geq 
 \int_t^s \int_{\bb{T}^d} m_\ep L\left(x,\frac{w_\ep}{m_\ep}\right).
 $$
 So we are left to show that $R_\ep\to 0$. Note that $\zeta_\ep\to 0$ a.e.  The bounds on $L$ indicate
$$
|\zeta_\ep(r,x)| \leq 2C \int_\R\int_{\bb{\R^d}} \xi_\ep(r-\tau, x-y)\left(\frac{|w(\tau,y)|^{r'}}{m^{r'-1}(\tau,y)}+1\right)dy d\tau
 $$
where the right-hand side converges in $L^1$ to $2C \frac{|w|^{r'}}{m^{r'-1}}+1$ since this map is in $L^1$  by the proof of Theorem \ref{thm:dual}. 
 By the Dominate Convergence Theorem we conclude that $R_\ep\to 0$ and \eqref{eq:limit_L} holds. 
 
We now want to extend this argument to the case where $t = 0$ or $s = T$. For the case, $s = T$, note that
$$
\int_{\bb{T}^d} \phi(t)m_\epsilon(t) \leq \int_{\bb{T}^d} \phi_Tm_\epsilon(T) + \int_t^T \int_{\bb{T}^d} m_\epsilon L\left(x,\frac{w_\epsilon}{m_\epsilon}\right) + m_\epsilon \alpha.
$$
It suffices to show that
\begin{equation}
 \int_{\bb{T}^d} \phi_T m_{\epsilon}(T) \to \int_{\bb{T}^d} \phi_T m(T), ~~ \epsilon \to 0,
\end{equation}
and the rest of the argument follows as before.
We can choose a sequence $s_n \to T$ such that for a fixed $n$, $m_\epsilon(s_n) \to m(s_n)$ in $L^q(\bb{T}^d)$ as $\epsilon \to 0$. Then recall that $m_\epsilon$ and $m$ are both weakly continuous in time, and in particular $t \mapsto \int \phi_T m(t)$ is continuous. Since $m_\epsilon$ is a regularization of $m$ by convolution, it follows that $t \mapsto \int \phi_T m_\epsilon(t)$ is continuous with modulus of continuity independent of $\epsilon$. Let us examine the three terms on the right-hand side of the following estimate:
\begin{multline*}
\left|\int_{\bb{T}^d} \phi_T m_{\epsilon}(T) - \int_{\bb{T}^d} \phi_T m(T) \right| \leq \left|\int_{\bb{T}^d} \phi_T m_{\epsilon}(T) - \int_{\bb{T}^d} \phi_T m_\epsilon(s_n) \right| \\
+ \left|\int_{\bb{T}^d} \phi_T m_{\epsilon}(s_n) - \int_{\bb{T}^d} \phi_T m(s_n) \right| + \left|\int_{\bb{T}^d} \phi_T m(s_n) - \int_{\bb{T}^d} \phi_T m(T) \right|.
\end{multline*}
The first and third terms can be made small by choosing $n$ large enough, independent of $\epsilon$. The second term can then be made small be choosing $\epsilon$ small, which yields the desired result.

Now we consider the case $t = 0$.
To do this we are going to choose a different smooth approximation of $m$ and $w$. Let $\eta : \bb{R} \to (0,\infty)$ and $\psi : \bb{R}^d \to (0,\infty)$ be convolution kernels, with $\eta_\epsilon(t) = \epsilon^{-1}\eta(t/\epsilon)$ and $\psi_\delta(x) = \delta^{-d}\psi(x/\delta)$. Let $\xi_{\epsilon,\delta}(t,x) = \eta_\epsilon(t)\psi_\delta(x)$ and define $m_{\epsilon,\delta} := \xi_{\epsilon,\delta} \ast m$ and $w_{\epsilon,\delta} := \xi_{\epsilon,\delta} \ast w$. We have that
$$
\partial_t m_{\epsilon,\delta} + \mathrm{div}~w_{\epsilon,\delta} = 0,
$$
and so we obtain, as before,
$$
\int_{\bb{T}^d} \phi(0)m_{\epsilon,\delta}(0) \leq \int_{\bb{T}^d} \phi(t)m_{\epsilon,\delta}(t) + \int_0^t \int_{\bb{T}^d} m_{\epsilon,\delta} L\left(x,\frac{w_{\epsilon,\delta}}{m_{\epsilon,\delta}}\right) + m_{\epsilon,\delta} \alpha.
$$
As $\epsilon,\delta$ both tend to zero, we have that $m_{\epsilon,\delta} \to m, w_{\epsilon,\delta} \to w$ in $L^q$. So the same arguments as above hold, if we can just show that
\begin{equation}
\int_{\bb{T}^d} \phi(0)m_{\epsilon,\delta}(0) \to \int_{\bb{T}^d} \phi(0)m_0.
\end{equation}
on some subsequence $\epsilon,\delta \to 0$. Our first observation is that
\begin{align*}
|m_{\epsilon,\delta}(0,x) - m_0(x)| &\leq \left|\iint \eta_\epsilon(-s)\psi_\delta(x-y)[m(s,y)-m_0(y)]dy ds \right|\\
&~~~~~~~~~ + \left|\iint \eta_\epsilon(-s)\psi_\delta(x-y)[m_0(y)-m_0(x)]dy ds \right|\\
&\leq \left|\iint \eta_\epsilon(-s)\psi_\delta(x-y)[m(s,y)-m_0(y)]dy ds \right| + \omega(\delta),
\end{align*}
where $\omega$ is the modulus of continuity of $m_0$. Next, we observe that
\begin{align*}
\left|\iint \eta_\epsilon(-s)\psi_\delta(x-y)[m(s,y)-m_0(y)]dy ds \right| &= \left|\iint \eta_\epsilon(-s)\psi_\delta(x-y)\int_0^s \partial_t m(\tau,y) d\tau dy ds \right|\\
&= \left|\int_{-\epsilon}^\epsilon \int_{\bb{T}^d} \int_0^s \eta_\epsilon(-s)\psi_\delta(x-y) \mathrm{div}~ w(\tau,y) d\tau dy ds \right|\\
&\leq \left|\int_{-\epsilon}^\epsilon \int_{\bb{T}^d} \int_0^s \eta_\epsilon(-s) D \psi_\delta(x-y) w(\tau,y) d\tau dy ds \right|\\
&\leq \frac{1}{\delta^{2d}}\int_{-\epsilon}^\epsilon \int_{\bb{T}^d} \int_0^\epsilon \eta_\epsilon(-s) \left|D \psi\left(\frac{x-y}{\delta}\right)\right| |w(\tau,y)| d\tau dy ds \\
&\leq \frac{C}{\delta^{2d}} \int_0^\epsilon \int_{\bb{T}^d} |w(\tau,y)| d\tau dy.
\end{align*}
Since $w$ is integrable, we can take $\epsilon = \epsilon(\delta)$ small enough relative to $\delta$ to get $m_{\epsilon(\delta),\delta}(0) \to m_0$ uniformly as $\delta \to 0$. It follows that $\int \phi(0)m_{\epsilon,\delta}(0) \to \int \phi(0)m_0$, and the rest of the argument proceeds as above.

It now remains to remove the assumption $\alpha \in L^p$. For $M>0$ large, let us set $\alpha_M= \alpha\vee (-M)$. Then the pair $(\phi,\alpha_M)$ belongs to $\s{K}$ and $\alpha_M\in L^p$ since $\alpha_+\in L^p$. So we can apply the above results to this pair. In particular,  
$$
\int_{\bb{T}^d} \phi(0,x)m_0(x)dx \leq \int_{\bb{T}^d} \phi_T(x)m(T,x)dx + \int_0^T \int_{\bb{T}^d} m(\tau,x)L\left(x,\frac{w(\tau,x)}{m(\tau,x)}\right) + \alpha_M(\tau,x)m(\tau,x)dx d\tau.
$$
Hence $\displaystyle \int_0^T \int_{\bb{T}^d} (\alpha_-\wedge M)m\leq C$ for any $M$, which shows that $(\alpha_-)m$ is integrable. Then $\alpha m$ is also integrable and we can easily complete the proof of \eqref{eq:integration_by_parts}, \eqref{eq:integration_by_parts2} and \eqref{eq:integration_by_parts3} by approximation. 
\end{proof}

\begin{proof}[Proof of Proposition \ref{prop:relaxed}.]
Since for $\phi \in \s{K}_0$ we have that $(\phi,-\partial_t \phi + H(x,D\phi)) \in \s{K}$, it follows that\\ $\inf_{(\phi,\alpha) \in \s{K}} \s{A}(\phi,\alpha) \leq \inf_{\phi \in \s{K}_0} \s{A}(\phi)$. It therefore suffices to prove the opposite inequality. Equivalently, let $(m,w)$ be a minimizer of the dual problem \ref{pr:dual}; it suffices to show by Theorem \ref{pr:dual} that for $(\phi,\alpha) \in \s{K}$, we have $\s{A}(\phi,\alpha) \geq -\s{B}(m,w)$. As we will see below, this essentially follows from Lemma \ref{lem:integration_by_parts}.

From Lemma \ref{lem:integration_by_parts}, $\alpha m$ is integrable and we have
\begin{equation*}
\int_0^T \int_{\bb{T}^d} F^*(x,\alpha(t,x)) + F(x,m(t,x))dx dt \geq \iint \alpha m
\end{equation*}
and 
\begin{equation*}
\iint \alpha m \geq -\iint mL\left(x,\frac{w}{m}\right) + \int_{\bb{T}^d} \phi(0)m_0 - \phi_T m(T).
\end{equation*}
Putting the above estimates together,
\begin{align*}
\s{A}(\phi,\alpha) &= \int_0^T \int_{\bb{T}^d} F^*(x,\alpha(t,x))dx dt - \int_{\bb{T}^d} \phi(0,x)m_0(x)dx\\
&\geq -\int_0^T \int_{\bb{T}^d} F(x,m(t,x)) + m(t,x)L\left(x,\frac{w(t,x)}{m(t,x)}\right)dx dt - \int_{\bb{T}^d} \phi_T(x)m(T,x)dx\\
&=  - \s{B}(m,w).
\end{align*}
\end{proof}

We complete this part by a simple remark concerning the case of equality in \eqref{eq:integration_by_parts}: \medskip

\begin{corollary}\label{cor:egalite} Assume that $(\phi,\alpha)$ and $(m, w)$ are as in Lemma \ref{lem:integration_by_parts} and that 
\begin{equation}\label{lkqhsdn}
\int_{\bb{T}^d} \phi(0,x)m_0(x)dx = \int_{\bb{T}^d} \phi_T(x)m(T,x)dx + \int_0^T \int_{\bb{T}^d} m(\tau,x)L\left(x,\frac{w(\tau,x)}{m(\tau,x)}\right) + \alpha(\tau,x)m(\tau,x)dx d\tau.
\end{equation}
Then $-\partial_t\phi^{ac}(t,x)+H(x,D\phi(t,x))= \alpha(t,x)$ for $m-$a.e. $(t,x)\in (0,T)\times \bb{T}^d$, where $\partial_t\phi^{ac}$ is the absolutely continuous part of the measure $\partial_t\phi$. 
\end{corollary}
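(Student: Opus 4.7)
The plan is to revisit the proof of Lemma \ref{lem:integration_by_parts} and bookkeep the slack in each inequality that was used along the way. Writing $\mu := \partial_t\phi$ (a signed Radon measure since $\phi\in BV$) and noting that $\alpha-H(\cdot,D\phi)\in L^1$ combined with $-\partial_t\phi + H(\cdot,D\phi)\leq\alpha$ as distributions yields $\mu + (\alpha-H(\cdot,D\phi))\,dx\,dt\geq 0$ as a measure, the singular and absolutely continuous parts must separately be non-negative. This gives, as a preliminary step, that $\mu^s\geq 0$ as a measure and $-\mu^{ac}+H(\cdot,D\phi)\leq\alpha$ almost everywhere. The aim is to upgrade this a.e. inequality to equality on the set $\{m>0\}$.

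For the smooth approximations $m_\epsilon,w_\epsilon$ from the proof of Lemma \ref{lem:integration_by_parts} (with a strictly positive mollifier so that $m_\epsilon>0$), the plan is to rewrite the integration-by-parts identity, the Fenchel--Young inequality, and the distribution inequality tested against $m_\epsilon$ in the equivalent bookkeeping form
\begin{equation*}
\int_{\bb{T}^d}\phi(0)m_\epsilon(0)\,dx + \int m_\epsilon\,d\mu^s + \int E_{Y,\epsilon} + \int E_{D,\epsilon} = \int_{\bb{T}^d}\phi_T m_\epsilon(T)\,dx + \int_0^T\!\!\int_{\bb{T}^d}\Bigl[m_\epsilon L\Bigl(x,\tfrac{w_\epsilon}{m_\epsilon}\Bigr) + m_\epsilon\alpha\Bigr]\,dx\,dt,
\end{equation*}
where $E_{Y,\epsilon}:=m_\epsilon L(\cdot,w_\epsilon/m_\epsilon)+m_\epsilon H(\cdot,D\phi)+(D\phi,w_\epsilon)\geq 0$ is the Fenchel--Young slack and $E_{D,\epsilon}:=m_\epsilon[\alpha-H(\cdot,D\phi)+\mu^{ac}]\geq 0$ is the distribution slack; all three quantities $\int m_\epsilon\,d\mu^s$, $\int E_{Y,\epsilon}$, $\int E_{D,\epsilon}$ are non-negative.

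Next I would pass to the limit $\epsilon\to 0$: the endpoint convergences $\int\phi(0)m_\epsilon(0)\to\int\phi(0)m_0$ and $\int\phi_T m_\epsilon(T)\to\int\phi_T m(T)$ were established in the proof of Lemma \ref{lem:integration_by_parts}; the convergence of $\int m_\epsilon L(\cdot,w_\epsilon/m_\epsilon)$ is \eqref{eq:limit_L}; and for $\int m_\epsilon\alpha$ I would split $\alpha=\alpha_+-\alpha_-$ and combine $L^q$--$L^p$ duality for the $L^p$ summand $\alpha_+$ with Fatou's lemma for $\alpha_- m\in L^1$ (integrable by Lemma \ref{lem:integration_by_parts}) to obtain $\limsup_\epsilon\int m_\epsilon\alpha\leq\int m\alpha$. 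Together with hypothesis \eqref{lkqhsdn}, this forces
\begin{equation*}
\lim_{\epsilon\to 0}\left[\int m_\epsilon\,d\mu^s + \int E_{Y,\epsilon} + \int E_{D,\epsilon}\right] = 0,
\end{equation*}
so by non-negativity each of the three summands tends to zero along a subsequence. Applying Fatou's lemma to $E_{D,\epsilon}\geq 0$, using $m_\epsilon\to m$ a.e. on a subsequence and the fact that $\alpha-H(\cdot,D\phi)+\mu^{ac}$ is a fixed non-negative function, gives $\int m(\alpha-H(\cdot,D\phi)+\mu^{ac})\leq 0$; since the integrand is non-negative a.e., it must vanish $m$-a.e., which is exactly the claim.

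The main delicacy is the inequality $\limsup_\epsilon\int m_\epsilon\alpha\leq\int m\alpha$ when $\alpha$ is only assumed to satisfy $\alpha_+\in L^p$: handling the possibly unbounded $\alpha_-$ requires the splitting above together with the non-trivial fact (already supplied by Lemma \ref{lem:integration_by_parts}) that $\alpha m\in L^1$. Everything else is a routine slack-tracking exercise driven by the observation that the three non-negative slack terms in the chain of inequalities leading to \eqref{eq:integration_by_parts2}--\eqref{eq:integration_by_parts3} must all collapse to zero.
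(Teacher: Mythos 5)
Your proof is correct but follows a genuinely different route from the paper. You re-open the proof of Lemma \ref{lem:integration_by_parts} and track the nonnegative slack in each of the three inequalities used there: the singular-part contribution $\int m_\epsilon\,d(\partial_t\phi)^s$, the Fenchel--Young slack $E_{Y,\epsilon}$, and the distributional slack $E_{D,\epsilon}$ (plus, implicitly, the slack $\int(\phi_T-\phi(T))m_\epsilon(T)$ coming from the trace inequality $\phi(T,\cdot)\leq\phi_T$, which you fold silently into the identity when writing $\phi_T$ on the right; it is also nonnegative, so your argument is unaffected). Passing to the limit and using the equality hypothesis forces the total slack to vanish, and then Fatou on $E_{D,\epsilon}\geq 0$ yields the claim. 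This is sound, though it requires redoing the mollifier limits (including the modified kernel $m_{\epsilon,\delta}$ at $t=0$) with the extra bookkeeping, and tacitly uses the integrability $\alpha m\in L^1$ already established in Lemma \ref{lem:integration_by_parts} to make sense of the right-hand side before passing to the limit.

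The paper instead argues by contradiction, using Lemma \ref{lem:integration_by_parts} strictly as a black box: it first observes, exactly as you do, that $(\partial_t\phi)^s\geq 0$; then, assuming the equality fails on a set $E$ of positive measure where $m\geq\delta$, it perturbs to $\alpha_\delta:=\alpha-\delta\mathbf{1}_E$, checks (using $(\partial_t\phi)^s\geq 0$) that $(\phi,\alpha_\delta)\in\s{K}$, and applies the lemma to $(\phi,\alpha_\delta)$ to deduce
\begin{equation*}
\int_{\bb{T}^d}\phi(0)m_0 \leq \int_{\bb{T}^d}\phi_T m(T) + \int_0^T\!\!\int_{\bb{T}^d} mL\Bigl(x,\tfrac{w}{m}\Bigr) + \alpha m - \delta^3,
\end{equation*}
which contradicts \eqref{lkqhsdn}. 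The paper's route is shorter, avoids any further analysis of the mollified sequence, and exploits the structural flexibility of $\s{K}$ (that one may lower $\alpha$ and remain admissible). Your route is more direct and conceptually transparent --- one sees explicitly which inequalities saturate under \eqref{lkqhsdn}, and in fact your argument yields as a byproduct that $\int m\,d(\partial_t\phi)^s=0$ and that the Fenchel--Young inequality saturates $m$-a.e., information that Theorem \ref{thm:minimizers_weak} later needs and establishes by a separate computation --- at the cost of carrying more technical baggage from the proof of the lemma.
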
 

\begin{proof} Let $\partial_t\phi^{s}$ be the singular part of the measure $\partial_t\phi$. Since inequality $-\partial_t\phi\leq -H(x,Du)+ \alpha$ holds in the sense of distribution and the right-hand side of the inequality belongs to $L^1$, $\partial_t\phi^{s}\geq 0$. Assume that the result claimed in the corollary is false. Then there exists $\delta>0$ such that $-\partial_t\phi^{ac}(t,x)+H(x,D\phi(t,x))\leq \alpha(t,x)-\delta$ in a set $E$ in which $m\geq \delta$ a.e. and such that $|E|\geq \delta$. If we set $\alpha_\delta:= \alpha-\delta {\bf 1}_E$,  then, as  $\partial_t\phi^{s}\geq 0$, one has $-\partial_t\phi+H(x,Du)\leq \alpha_\delta$, so that the pair $(\phi,\alpha_\delta)$ still belongs to $\s{K}$. Lemma \ref{lem:integration_by_parts}  implies that 
$$
\int_{\bb{T}^d} \phi(0,x)m_0(x)dx \leq \int_{\bb{T}^d} \phi_T(x)m(T,x)dx + \int_0^T \int_{\bb{T}^d} m(\tau,x)L\left(x,\frac{w(\tau,x)}{m(\tau,x)}\right) + \alpha_\delta(\tau,x)m(\tau,x)dx d\tau, 
$$
where 
$$
\int_0^T \int_{\bb{T}^d} \alpha_\delta(\tau,x)m(\tau,x)dx d\tau\leq \int_0^T \int_{\bb{T}^d} \alpha(\tau,x)m(\tau,x)dx d\tau- \delta^3.
$$
This contradicts equality \eqref{lkqhsdn}.

\end{proof}

\subsection{Existence of minimizers of the relaxed problem}
\label{subsec:existence_relaxed}

We now explain that the relaxed problem has a solution: \medskip
\begin{theorem} \label{thm:existence} Under our structure conditions, 
Problem \ref{pr:relaxed} has a minimizer $(\phi,\alpha) \in \s{K}$.
\end{theorem}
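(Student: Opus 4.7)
I would use the direct method of the calculus of variations. Take a minimizing sequence $(\phi_n,\alpha_n) \in \s{K}$; by Proposition \ref{prop:relaxed} the infimum equals $-\min_{\s{K}_1}\s{B}$, hence is finite. The first step is uniform a priori estimates. Testing the distributional inequality \eqref{eq:inequality_in_distribution} against the dual minimizer $(\bar m,\bar w)$ of Problem \ref{pr:dual}, as is permitted by Lemma \ref{lem:integration_by_parts}, and using Fenchel's inequality $\alpha_n\bar m \le F^*(x,\alpha_n) + F(x,\bar m)$, one obtains $\int \phi_n(0)m_0 \le C + \int F^*(x,\alpha_n)$. Combined with $\s{A}(\phi_n,\alpha_n)\le C$, this converts the boundedness of the difference $\int F^*(x,\alpha_n) - \int\phi_n(0)m_0$ into separate uniform bounds on each term; \eqref{eq:cost_growth_star} then yields $\|\alpha_n^+\|_p\le C$, while the elementary inequality $F^*(x,a) \ge a - K_1$ (used just after Problem \ref{pr:relaxed}) gives a one-sided $L^1$ bound on $\int \alpha_n$.

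The HJ inequality $-\partial_t \phi_n + H(x,D\phi_n)\le \alpha_n$ combined with \eqref{eq:hamiltonian_bounds} then gives $\|D\phi_n\|_{L^r}\le C$ and uniform control on $(\partial_t\phi_n)^-$ as a measure. Integrating $\partial_t\phi_n$ over $[0,T]\times\bb{T}^d$ and using $\phi_n(T,\cdot)\le \phi_T$ together with the control on $\int \phi_n(0)m_0$ also bounds $(\partial_t\phi_n)^+$, so $\phi_n$ is uniformly bounded in BV. Via BV compactness one extracts a subsequence with $\phi_n \to \phi$ in $L^1$ and almost everywhere, $D\phi_n \rightharpoonup D\phi$ weakly in $L^r$, $\alpha_n^+ \rightharpoonup \alpha^+$ weakly in $L^p$, and $\alpha_n^-$ converging weakly-$*$ to a nonnegative measure. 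Convexity of $H(x,\cdot)$ and of $F^*(x,\cdot)$ provides the lower semicontinuity needed to pass to the limit both in \eqref{eq:inequality_in_distribution} and in $\s{A}$, giving $(\phi,\alpha)\in\s{K}$ with $\s{A}(\phi,\alpha) \le \liminf \s{A}(\phi_n,\alpha_n)$, i.e.\ $(\phi,\alpha)$ is a minimizer.

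The main obstacle is the one flagged in the remark following Theorem \ref{thm:existence}: when $\lim_{m\to 0^+} f(x,m) = -\infty$, the conjugate $F^*(x,\cdot)$ has no useful lower bound as its argument tends to $-\infty$ and $\alpha_n^-$ is only controlled in $L^1$, so its weak-$*$ limit could carry a singular part incompatible both with writing $\alpha\in L^1$ and with the lower semicontinuity of $\int F^*(x,\alpha_n)$. The cleanest way to bypass this is truncation: replace $f$ by $f_N := f \vee (-N)$, for which the easier condition $f_N(x,0^+) > -\infty$ holds so that existence of a minimizer $(\phi_N,\alpha_N)$ is standard. One then lets $N\to \infty$ using monotone convergence $F_N \uparrow F$ (hence $F^*_N \downarrow F^*$) together with the a priori estimates of the first paragraph, which remain uniform in $N$; the rigidity furnished by Corollary \ref{cor:egalite}, forcing pointwise saturation of Fenchel's inequality at the minimum, is what ultimately rules out concentration of mass in the limit.
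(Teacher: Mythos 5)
Your first a priori estimate does not actually produce the separate bounds you claim. Testing against the dual minimizer $(\bar m,\bar w)$ via Lemma \ref{lem:integration_by_parts} and Fenchel gives $\int\phi_n(0)m_0 \le C + \int F^*(x,\alpha_n)$, which is just the statement $\s{A}(\phi_n,\alpha_n)\ge -C$, i.e.\ that the infimum is finite. Combined with $\s{A}(\phi_n,\alpha_n)\le C$ this bounds the \emph{difference}, not each term. The paper's proof instead needs the genuinely stronger pointwise estimate of Lemma \ref{lem:upper_bound} (a H\"older-type bound on smooth subsolutions), which it applies to a minimizing sequence $\phi_n\in\s{K}_0$ of \emph{smooth} maps (legitimate since Proposition \ref{prop:relaxed} makes the two infima coincide) and which yields $\int\phi_n(0)m_0\le C\|(\alpha_n)_+\|_p+C$ \emph{with the $L^p$-norm of $\alpha_n^+$ in the bound}. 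Only with that improved estimate, and after a delicate $\epsilon$-bootstrapping involving the split $F^*(x,a)\ge \epsilon a-K_\epsilon$ and the identity $(\alpha_n)_-=(\alpha_n)_++\partial_t\phi_n-H(x,D\phi_n)$, does one obtain $\|(\alpha_n)_+\|_p\le C$ and $\|(\alpha_n)_-\|_1\le C$. Your proposal does not recover these bounds.

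You correctly identify the central obstruction: $(\alpha_n)_-$ is only $L^1$-bounded, so its weak-$*$ limit can concentrate, and since $F^*(x,\cdot)$ has at most linear decay as $a\to-\infty$ (precisely when $f(x,0^+)=-\infty$), $\int F^*(x,\cdot)$ is \emph{not} lower semicontinuous under weak-$*$ convergence of measures for the negative part. But your proposed remedy is both different from the paper's and incompletely justified. The paper deals with this by a \emph{biting lemma} argument (citing \cite{kosygina08}): it decomposes $(\alpha_n)_-=\beta_n+r_n$ with $\beta_n$ uniformly integrable and the support of $r_n$ shrinking, then \emph{modifies the minimizing sequence} $\alpha_n\rightsquigarrow\tilde\alpha_n$ by setting $\tilde\alpha_n=1$ on $\{r_n>0\}$, and verifies by a direct computation that $(\phi_n,\tilde\alpha_n)$ is still minimizing and $\tilde\alpha_n$ is uniformly integrable, so that Dunford--Pettis applies. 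Your alternative --- truncating $f$ to $f_N=f\vee(-N)$, solving the truncated problem, and letting $N\to\infty$ by monotone convergence --- might be made to work, but as stated it is a gap: you would need to prove that the truncated minimizers $(\phi_N,\alpha_N)$ have bounds uniform in $N$ (your first-paragraph estimate doesn't deliver this), and you would need to justify passing to the limit in the constraint and in the functional \emph{without} re-encountering exactly the same $L^1$ concentration issue you started with, now for the sequence $(\alpha_N)_-$. Invoking Corollary \ref{cor:egalite} ``to rule out concentration'' does not address this: that corollary is a rigidity statement about the pointwise equality $-\partial_t\phi^{ac}+H(x,D\phi)=\alpha$ on $\{m>0\}$ \emph{once} the equality case in Lemma \ref{lem:integration_by_parts} is known, and it gives no uniform integrability information on a minimizing sequence. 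So the scheme, as written, does not close.
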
 \medskip

Before actually proving the result, we need pointwise estimates on solutions of the Hamilton-Jacobi equation. \medskip

\begin{lemma} \label{lem:upper_bound}
For any smooth subsolution of $-\partial_t \phi + H(x,D\phi) \leq \alpha$, for $0 \leq t_1 < t_2 \leq T$ and $x,y \in \bb{T}^d$ we have
\begin{equation} \label{eq:upper_bound}
\phi(t_1,x) \leq \phi(t_2,y) + C\left[|x-y|^{r'}(t_2-t_1)^{1-r'} + \left[(t_2-t_1)^{\nu}\wedge 1+ T^{1/q}\right] (\|\alpha_+\|_p+1)\right]
\end{equation}
where $C$ does not depend on $T$ and 
\begin{equation} \label{eq:nu}
\nu := \frac{r-d(q-1)}{d(q-1)(r-1)+rq}.
\end{equation}
\end{lemma}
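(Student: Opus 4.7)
The plan is to integrate the subsolution inequality along a \emph{tube} of curves $\gamma_z$ joining $(t_1,x)$ to $(t_2,y)$, and to use a measure-preserving change of variables combined with H\"older's inequality to trade the curve-integral of $\alpha_+$ for the $L^p$-norm of $\alpha_+$.

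First, I perform a shift. Replacing $H$ by $\tilde H := H - C$ and $\alpha$ by $\tilde\alpha := \alpha - C$ preserves the subsolution inequality, and the associated Lagrangian $\tilde L(x,q) := \tilde H^*(x,-q) = L(x,q) - C$ now satisfies $\tilde L(x,q) \leq \tfrac{C}{r'}|q|^{r'}$ with no additive constant. Since $\tilde\alpha \leq \alpha_+$, this shift only helps. Then for any $C^1$ curve $\gamma:[t_1,t_2]\to\bb{T}^d$, the Fenchel inequality $\tilde H(x,p) \geq -p\cdot v - \tilde L(x,v)$ applied with $v=-\gamma'(s)$ yields $\tfrac{d}{ds}\phi(s,\gamma(s)) \geq -\tilde L(\gamma,\gamma') - \tilde\alpha$, hence
$$\phi(t_1,\gamma(t_1)) \leq \phi(t_2,\gamma(t_2)) + \int_{t_1}^{t_2}\bigl[\tilde L(\gamma,\gamma') + \alpha_+(s,\gamma(s))\bigr]\,ds.$$

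Next, I build a tube of curves all joining $(t_1,x)$ to $(t_2,y)$. With $\lambda(s) := (s-t_1)/(t_2-t_1)$, a profile $\sigma_0(\lambda) := [\lambda(1-\lambda)]^a$ (exponent $a$ chosen below), $\rho > 0$, and $z \in B_R \subset \R^d$, set
$$\gamma_z(s) := x + \lambda(s)(y-x) + \rho\,\sigma_0(\lambda(s))\,z,$$
so $\gamma_z(t_1)=x$ and $\gamma_z(t_2)=y$ for all $z$. Averaging the single-curve inequality over $z\in B_R$ and using the upper bound on $\tilde L$, the Lagrangian piece is bounded by
$$\frac{1}{|B_R|}\int_{B_R}\int_{t_1}^{t_2}\tilde L(\gamma_z,\gamma_z')\,ds\,dz \;\leq\; C\left[\frac{|y-x|^{r'}}{(t_2-t_1)^{r'-1}} + \frac{(R\rho)^{r'}}{(t_2-t_1)^{r'-1}}\right],$$
the constant swallowing $\int_0^1|\sigma_0'|^{r'}\,d\lambda$, finite iff $a > 1/r$. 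For the $\alpha_+$ piece, the map $z\mapsto\gamma_z(s)$ has constant Jacobian $(\rho\sigma_0(\lambda(s)))^d$; assuming $R\rho$ stays below the injectivity radius of $\bb{T}^d$, a change of variables plus H\"older in $x$ and then in $s$ with exponents $(p,q)$ produces
$$\frac{1}{|B_R|}\int_{B_R}\int_{t_1}^{t_2}\alpha_+(s,\gamma_z(s))\,ds\,dz \;\leq\; C\,(R\rho)^{-d/p}(t_2-t_1)^{1/q}\|\alpha_+\|_p,$$
the constant swallowing $\bigl(\int_0^1\sigma_0(\lambda)^{-d(q-1)}d\lambda\bigr)^{1/q}$, finite iff $a < 1/(d(q-1))$. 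The key role of \eqref{eq:conqr} appears here: $r > d(q-1)$ is exactly what makes the interval $(1/r,\,1/(d(q-1)))$ for $a$ nonempty, so a single $\sigma_0$ meets both integrability constraints.

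Setting $u := R\rho$, this gives the one-parameter bound
$$\phi(t_1,x) - \phi(t_2,y) \leq C\left[\frac{|y-x|^{r'}}{(t_2-t_1)^{r'-1}} + \frac{u^{r'}}{(t_2-t_1)^{r'-1}} + u^{-d/p}(t_2-t_1)^{1/q}\|\alpha_+\|_p\right].$$
For $t_2-t_1 \leq 1$, balancing the two $u$-dependent terms yields $u^{r'+d/p} \sim (t_2-t_1)^{r'-1/p}\|\alpha_+\|_p$; an elementary algebraic simplification shows the resulting exponent of $t_2-t_1$ reduces to exactly the $\nu$ of \eqref{eq:nu}, while the exponent of $\|\alpha_+\|_p$ is $\beta := r'/(r'+d/p) \in (0,1)$, so the inequality $x^\beta \leq 1+x$ bounds the $u$-terms by $C(t_2-t_1)^{\nu}(1+\|\alpha_+\|_p)$. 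For $t_2-t_1 > 1$, I fix $u$ equal to a small constant; the $u$-terms are then at most $C(1 + T^{1/q}\|\alpha_+\|_p)$. Merging the two regimes in the form $(t_2-t_1)^{\nu}\wedge 1 + T^{1/q}$ recovers the claimed estimate. The principal technical obstacles I expect are the change-of-variables on the torus, where one must carefully restrict the tube amplitude to remain below the injectivity radius (or account for the covering multiplicity), and verifying by direct algebra that the balanced exponent in the optimization coincides with the $\nu$ of \eqref{eq:nu}.
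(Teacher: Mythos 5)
Your proposal is correct and follows essentially the same strategy as the paper: integrate the subsolution inequality along a tube of curves that collapses at both endpoints, then use H\"older to trade the tube-averaged integral of $\alpha_+$ for its $L^p$-norm, with \eqref{eq:conqr} being exactly what makes the admissible interval $(1/r,\,1/(d(q-1)))$ for the profile exponent nonempty. The only variation is bookkeeping: the paper fixes the tube radius to $(s-t_1)^\beta$ (resp.\ $(t_2-s)^\beta$) and chooses $\beta$ explicitly so that the Lagrangian and $\alpha_+$ exponents both equal $\nu$, while you take an arbitrary admissible profile exponent $a$ plus a free transverse scale $u$ and recover $\nu$ by optimizing over $u$; likewise, for $t_2-t_1>1$ the paper flattens the tube on $[t_1+1/2,\,t_2-1/2]$ where you instead cap $u$.
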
 \medskip

Note that, by assumption \eqref{eq:conqr}, $\nu$ is positive. 

\begin{proof}
Fix $\beta \in (1/r,\frac{1}{d(q-1)})$ to be specified later. Let $x,y \in \bb{T}^d$ and $0 \leq t_1 < t_2 \leq T$ such that $t_2-t_1\leq 1$ (this assumption is removed at the end of the proof), and let $\gamma$ be an absolutely continuous path such that 
\begin{itemize}
\item $\gamma(t_1) = x$,
\item $\gamma(t_2) = y$, and
\item $C_\gamma := \int_{t_1}^{t_2} |\dot{\gamma}(s)|^{r'} ds < \infty$.
\end{itemize} 
For instance, we can choose $\gamma(s) = x + \theta(s-t)$, where $\theta = \displaystyle \frac{x-y}{t_2-t_1}$; this example gives the minimial value of $C_\gamma$ at $|x-y|^{r'}(t_2-t_1)^{1-r'}$.

For any $\sigma \in \bb{R}^d$ with $|\sigma| \leq 1$ define the arc
\begin{equation}
x_\sigma(s) =  \left\{\begin{array}{ccc}
\gamma(s) + \sigma(s-t_1)^\beta & \text{if} & s \in [t_1,\frac{t_1+t_2}{2}]\\
\gamma(s) + \sigma(t_2-s)^\beta & \text{if} & s \in [\frac{t_1+t_2}{2},t_2]
\end{array} \right.
\end{equation}
Then
\begin{align*}
& \hspace{-1cm}\frac{d}{ds}\left[\phi(s,x_\sigma(s)) - \int_s^{t_2} L(x_\sigma(\tau),\dot{x}_\sigma(\tau))d\tau \right]\\
&= \partial_t \phi(s,x_\sigma(s)) + D\phi(s,x_\sigma(s)) \cdot \dot{x}_\sigma(s) + L(x_\sigma(s),\dot{x}_\sigma)\\
&\geq \partial_t \phi(s,x_\sigma(s)) - H(x_\sigma(s),D\phi(s,x_\sigma(s))) \geq - \alpha_+(s,x_\sigma(s)).
\end{align*}
Integrating over $[t_1,t_2] \times B_1$ we get
\begin{equation}
\phi(t_1,x) \leq \phi(t_2,y) + \frac{1}{|B_1|}\int_{B_1} \int_{t_1}^{t_2} [L(x_\sigma(s),\dot{x}_\sigma(s)) + \alpha_+(s,x_\sigma))]ds d\sigma.
\end{equation}
By Assumption (\ref{eq:hamiltonian_conjugate_bounds}) we have
\begin{equation*}
\frac{1}{|B_1|}\int_{B_1} \int_{t_1}^{t_2} L(x_\sigma(s),\dot{x}_\sigma(s))ds d\sigma \leq C\left[\int_{B_1} \int_{t_1}^{t_2} |\dot{x}_\sigma(s)|^{r'}ds d\sigma + (t_2-t_1)\right].
\end{equation*}
We compute
\begin{align*}
\int_{B_1} \int_{t_1}^{\frac{t_1 + t_2}{2}} |\dot{x}_\sigma(s)|^{r'}ds d\sigma &= \int_{B_1} \int_{t_1}^{\frac{t_1 + t_2}{2}} |\dot{\gamma}(s) + \beta^{r'}\sigma(s-t_1)^{\beta - 1}|^{r'}ds d\sigma\\
&\leq C(C_\gamma + \beta^{r'}(t_2-t_1)^{1-r'(1-\beta)})
\end{align*}
where $1-r'(1-\beta) > 0$. 
In like manner we obtain
\begin{equation}
\int_{B_1} \int_{\frac{t_1 + t_2}{2}}^{t_2} |\dot{x}_\sigma(s)|^{r'}ds d\sigma \leq C(C_\gamma + \beta^{r'}(t_2-t_1)^{1-r'(1-\beta)}),
\end{equation}
from which we deduce
\begin{equation}
\frac{1}{|B_1|}\int_{B_1} \int_{t_1}^{t_2} L(x_\sigma(s),\dot{x}_\sigma(s))ds d\sigma \leq C(C_\gamma + \beta^{r'}(t_2-t_1)^{1-r'(1-\beta)}).
\end{equation}
On the other hand, using H\"{o}lder, we get
\begin{align*}
\int_{B_1} \int_{t_1}^{\frac{t_1 + t_2}{2}} \alpha_+(s,x_\sigma(s))ds d\sigma &\leq \int_{t_1}^{\frac{t_1 + t_2}{2}}\int_{B_1} \alpha_+(s,x + \theta(\tau - t_1) + \sigma(s-t_1)^\beta) d\sigma ds\\
&\leq \int_{t_1}^{\frac{t_1 + t_2}{2}}\int_{B(x+\theta(\tau-t_1);(s-t_1)^\beta)} (s-t_1)^{-d\beta}\alpha_+(s,\rho) d\rho ds\\
&\leq \left[\int_{t_1}^{\frac{t_1 + t_2}{2}}C(s-t_1)^{-d\beta(q-1)}ds\right]^{\frac{1}{q}} \|\alpha_+\|_p\\
&\leq C(t_2-t_1)^{(1-d\beta(q-1))/q}\|\alpha_+\|_p.
\end{align*}
Applying the same argument on the time interval $[\frac{t_1+t_2}{2},t_2]$ and summing we can conclude
\begin{equation}
\frac{1}{|B_1|}\int_{B_1} \int_{t_1}^{t_2} \alpha(s,x_\sigma(s))ds d\sigma \leq C(t_2-t_1)^{(1-d\beta(q-1))/q}\|\alpha_+\|_p.
\end{equation}
We now specify that
\begin{equation}
\beta := \frac{q(r'-1)+1}{d(q-1)+r'q} = \frac{q+r-1}{d(q-1)(r-1)+rq}.
\end{equation}
Note that $\beta \in (1/r,\frac{1}{d(q-1)})$ and
\begin{equation}
\frac{1-d\beta(q-1)}{q} = 1-r'(1-\beta) = \frac{r-d(q-1)}{d(q-1)(r-1)+rq} = \nu.
\end{equation} It follows from the above argument that
\begin{equation}
\phi(t_1,x) \leq \phi(t_2,y) + C[C_\gamma + (t_2-t_1)^{\nu}(\|\alpha_+\|_p+1)].
\end{equation}
Taking the particular example of $\gamma(s) = x + \theta(s-t)$, where $\theta = \displaystyle \frac{x-y}{t_2-t_1}$, we conclude with Equation (\ref{eq:upper_bound}).

It remains to dispose of the assumption $t_2-t_1\leq 1$: if $t_2-t_1\geq 1$, we can argue in the same way, just changing the family of paths into 
$$
x_\sigma(s) =  \left\{\begin{array}{ccc}
\gamma(s) + \sigma(s-t_1)^\beta & \text{if} & s \in [t_1,t_1+1/2]\\
\gamma(s) + \sigma(1/2)^\beta & \text{if} & s \in [t_1+1/2, t_2-1/2]\\
\gamma(s) + \sigma(t_2-s)^\beta & \text{if} & s \in [t_2-1/2,t_2]
\end{array} \right.
$$
\end{proof}

\begin{proof}[Proof of Theorem \ref{thm:existence}.]
Let $\phi_n \in \s{K}_0$ be minimizing sequence, that is, suppose
\begin{equation}\label{eq:minseq}
\int_0^T \int_{\bb{T}^d} F^*(x,\alpha_n) dx dt - \int_{\bb{T}^d} \phi_n(0,x)m_0(x)dx \to \inf_{\phi \in \s{K}_0} \s{A}(\phi)
\end{equation}
where $\alpha_n := -\partial_t \phi_n + H(x,D\phi_n)$.

{\em Step 1.} We need to prove that $(\alpha_n)$ is bounded in an $L^1$ sense. First note that
\begin{equation} \label{eq:jnejlnvr}
C \geq \int_0^T \int_{\bb{T}^d} F^*(x,\alpha_n) dx dt - \int_{\bb{T}^d} \phi_n(0,x)m_0(x)dx.
\end{equation}
By the bounds on $F^*$ in Equation (\ref{eq:cost_growth_star}) we have
\begin{equation} \label{eq:alpha_bound0}
\int_0^T \int_{\bb{T}^d} F^*(x,\alpha_n) dx dt \geq \frac{1}{pC}\|(\alpha_n)_+\|_p^p + \iint_{\alpha_n < 0} F^*(x,-(\alpha_n)_-) dx dt -C,
\end{equation}
while by Lemma \ref{lem:upper_bound}, there is a constant $C > 0$ such that
\begin{equation} \label{eq:alpha_bound1}
- \int_{\bb{T}^d} \phi_n(0,x)m_0(x)dx \geq  - C\|(\alpha_n)_+\|_p  - C.
\end{equation}
Let $\epsilon > 0$ to be chosen later. Since $F(\cdot,\epsilon)$ is continuous on $\bb{T}^d$, we let $K_\epsilon$ be its supremum. We have $F^*(x,a) \geq \epsilon a - K_\epsilon$ by definition of Fenchel conjugate. So from Equation (\ref{eq:alpha_bound0}) we get
\begin{equation} \label{eq:alpha_bound2}
\int_0^T \int_{\bb{T}^d} F^*(x,\alpha_n) dx dt \geq \frac{1}{pC}\|(\alpha_n)_+\|_p^p - \epsilon \int_0^T \int_{\bb{T}^d} (\alpha_n)_- dx dt - K_\epsilon T - C.
\end{equation}
Now taking into account (\ref{eq:hamiltonian_bounds}) we have
\begin{equation} \label{eq:alpha_minus}
(\alpha_n)_- = (\alpha_n)_+ + \partial_t \phi_n - H(x,\phi_n) \leq (\alpha_n)_+ + \partial_t \phi_n + C,
\end{equation}
and upon integration we get
\begin{align*}
\frac{1}{C}\int_0^T \int_{\bb{T}^d} (\alpha_n)_- dx dt &\leq \int_0^T \int_{\bb{T}^d} (\alpha_n)_- m_0 dx dt\\
&\leq \int_0^T \int_{\bb{T}^d} (\alpha_n)_+ m_0 dx dt + \int_{\bb{T}^d} \phi_T m_0 - \phi_n(0)m_0 dx + CT\\
&\leq C\|(\alpha_n)_+\|_p + C(T+1) - \int_0^T \int_{\bb{T}^d} F^*(x,\alpha_n) dx dt,
\end{align*}
where we have used \eqref{eq:jnejlnvr} for the last inequality. 
Substituting into (\ref{eq:alpha_bound2}) we get
\begin{equation} \label{eq:alpha_bound3}
(1-\epsilon C)\int_0^T \int_{\bb{T}^d} F^*(x,\alpha_n) dx dt \geq \frac{1}{pC}\|(\alpha_n)_+\|_p^p - \epsilon C^2 \|(\alpha_n)_+\|_p - K_\epsilon T - \epsilon C^2(T+1).
\end{equation}
Take $\epsilon >0$ sufficiently small. Combining (\ref{eq:alpha_bound3}) and (\ref{eq:alpha_bound1}) and substituting into (\ref{eq:jnejlnvr}) we get
\begin{equation} \label{eq:alpha_bound4}
C \geq \frac{1}{pC}\|(\alpha_n)_+\|_p^p - C\|(\alpha_n)_+\|_p - C K_\epsilon T
\end{equation}
where $C$ is some large constant. It follows that $(\alpha_n)_+$ is uniformly bounded in $L^p$.

To show that $(\alpha_n)_-$ is uniformly bounded in $L^1$, we will use (\ref{eq:alpha_minus}). First, for some constant $C$ large enough and any $\epsilon > 0$ we have
\begin{align*}
C &\geq \int_0^T \int_{\bb{T}^d} F^*(x,\alpha_n) dx dt - \int_{\bb{T}^d} \phi_n(0,x)m_0(x)dx\\
&\geq -\epsilon \int_0^T \int_{\bb{T}^d} (\alpha_n)_- dx dt - K_\epsilon T  - \int_{\bb{T}^d}\phi_n(0,x)m_0(x)dx\\
&\geq -\epsilon C\|(\alpha_n)_+\|_p - \epsilon C(T+1) - K_\epsilon T  - (1-\epsilon C) \int_{\bb{T}^d}\phi_n(0,x)m_0(x)dx.
\end{align*}
It follows that $\dys - \int_{\bb{T}^d} \phi_n(0)m_0$ is uniformly bounded above. We make the stronger claim that $\|\phi_n(0)\|_{L^1(\bb{T}^d)}$ is uniformly bounded. By Lemma \ref{lem:upper_bound}, 
$$
\phi_n(t,x)\leq \phi_T(x)+ CT^\nu (\|(\alpha_n)_+\|_p+1)\leq C,
$$
so $\phi_n$ is bounded above. Then using $1/C\leq m_0\leq C$ we find
$$
 \int_{\bb{T}^d} |\phi_n(0)| \leq  \int_{\bb{T}^d} (\phi_n(0))_-+C \leq C  \int_{\bb{T}^d} (\phi_n(0))_-m_0 +C \leq -C\int_{\bb{T}^d} \phi_n(0)m_0+ C \leq C,
$$
as desired.
Now Equation (\ref{eq:alpha_minus}) implies
\begin{equation}
\int_0^T \int_{\bb{T}^d} (\alpha_n)_- \leq C + C\|(\alpha_n)_+\|_p + \int_{\bb{T}^d} (\phi_T - \phi_n(0))
\end{equation}
which proves that $(\alpha_n)_-$ is uniformly bounded in $L^1$. We conclude that $(\alpha_n)$ is bounded in $L^1$.

{\em Step 2.} 
We would like to show next that $\phi_n$ is uniformly bounded in $L^1$. We already have from (\ref{eq:upper_bound}) that $\phi_n$ has a uniform upper-bound. On the other hand, one has from the same equation that
\begin{align*}
\phi_n(t,x) &\geq  \phi_n(0,x) - Ct^\nu(\|(\alpha_n)_+\|_p + 1) \; \geq \; \phi_n(0,x) - C.
\end{align*}
As $\|\phi_n(0)\|_1$ is bounded, this implies that $\dys \sup_{t\in [0,T]}\|\phi_n(t)\|_1\leq C$.

{\em Step 3.}
We want to take a modified version of $\alpha_n$ which has a weak limit in $L^1$ but also still minimizes the target functional. Since $(\alpha_n)_-$ is bounded in $L^1$, by \cite{kosygina08} we can pass to a subsequence on which $(\alpha_n)_- = \beta_n + r_n,$ where 
\begin{itemize}
\item $\beta_n = (\alpha_n)_- {\bf 1}_{\{(\alpha_n)_- \leq l_n\}}$ where $l_n \to \infty$,
\item $(\beta_n)$ is uniformly integrable,
\item $\beta_n r_n = 0$,  $r_n \geq 0$ and the measure of $\{r_n > 0\}$ goes to zero.
\end{itemize} 
Observe that $[0,T] \times \bb{T}^d$ can be decomposed as $\{\alpha_n \geq 0\} \cup \{\beta_n > 0\} \cup \{r_n > 0\}$, where the union is disjoint and the set $\{r_n > 0\}$ coincides with $\{(\alpha_n)_- > l_n\}$. Define $\tilde{\alpha}_n := \alpha_n $ on $\{r_n=0\}$ and $\tilde{\alpha}_n := 1$ otherwise. We have
\begin{multline}
\int_0^T \int_{\bb{T}^d} F^*(x,\alpha_n) = \iint_{\{\alpha_n \geq 0\}} F^*(x,(\alpha_n)_+) + \iint_{\{\beta_n > 0\}} F^*(x,-\beta_n) + \iint_{\{r_n > 0\}} F^*(x,-r_n)\\ = \int_0^T \int_{\bb{T}^d} F^*(x,\tilde{\alpha}_n) + \iint_{\{r_n > 0\}} (F^*(x,-r_n)-F^*(x,1)).
\end{multline}
To see that $(\phi_n,\tilde{\alpha}_n)$ is also a minimizing sequence in $\s{K}$, it suffices to show that 
$$
\liminf_{n \to \infty} \iint_{\{r_n > 0\}} F^*(x,-r_n)-F^*(x,1) = 0.$$
For $\epsilon > 0$, let us set as usual $K_\ep=\max_{\bb{T}^d}F(\cdot,\ep)$. Then
\begin{align*}
\iint_{\{r_n > 0\}} F^*(x,-r_n) -F^*(x,1) &\geq \iint_{\{(\alpha_n)_- > l_n\}} (-\epsilon r_n - K_\epsilon-\max_{\bb{T}^d}F^*(\cdot,1))\\
&\geq - \epsilon \|\alpha_n\|_{L^1} - (K_\epsilon+C) \frac{1}{l_n} \iint_{\{(\alpha_n)_- > l_n\}} (\alpha_n)_-\\
&\geq -(\epsilon + \frac{K_\epsilon+C}{l_n})\|\alpha_n\|_{L^1}
\end{align*}
Since $(\alpha_n)$ is bounded in $L^1$, $l_n \to \infty$, and $\epsilon > 0$ is arbitrary, the claim follows. We have shown that $(\phi_n,\tilde{\alpha}_n)$ is a minimizing sequence of $\s{A}$ in $\s{K}$ with $(\tilde{\alpha}_n)$ uniformly integrable.

{\em Step 4.} We want to show that $(\phi_n)$ is bounded in $BV$, and that its weak$^*$ limit has the desired regularity. First of all,
\begin{align*}
\int_0^T \int_{\bb{T}^N} H(x,D\phi_n(t,x)) dx dt &= \int_0^T\int_{\bb{T}^N} \partial_t \phi_n(t,x) + \alpha_n(t,x) dxdt \\
&\leq \int_{\bb{T}^N}\phi_T(x) - \phi_n(0,x)dx + \|\alpha_n\|_1
\end{align*}
which proves by (\ref{eq:hamiltonian_bounds}) that $D\phi_n$ is uniformly bounded in $L^{r'}$ and $H(x,D\phi_n)$ is uniformly bounded in $L^1$.
We then have that $\partial_t \phi_n = H(x,D\phi_n) - \alpha_n$ is bounded in $L^1$ as well. Thus $\phi_n$ is bounded in $BV$ and we have some $\phi \in BV$ for which $\phi_n \to \phi$ in $L^1$, and $(\partial_t \phi_n,D\phi_n) \rightharpoonup (\partial_t \phi,D\phi)$ weakly in the sense of measures. Finally, we prove that $\phi \in L^\infty(t,T)\times \T^d)$ for almost every $t \in [0,T]$. Indeed, we already proved uniform upper bounds on $\phi_n$, which thus apply to $\phi$. On the other hand,
\begin{align*}
-\phi_n(t,x) &= - \int_{\bb{T}^d} \phi_n(t,x) m_0(y)dy\\
&\leq - \int_{\bb{T}^d} \phi_n(0,y) m_0(y)dy + Ct^{1-r'}\int_{\bb{T}^d} |y-x|^{r'} m_0(y)dy + Ct^\nu(\|(\alpha_n)_+\|_p + 1)
\end{align*}
which shows that $\phi_n$ is uniformly bounded below on $[t,T] \times \bb{T}^d$ for all $t > 0$.

{\em Step 5.} Finally, we pass to the limit and verify the existence of a minimizer. Since $\tilde{\alpha}_n$ is uniformly integrable, by the Dunford-Pettis theorem there exists $\alpha \in L^1$ such that (up to a subsequence) $\tilde{\alpha}_n \rightharpoonup \alpha$ weakly in $L^1$. Since
\begin{equation}
-\partial_t \phi_n + H(x,D\phi_n) \leq \tilde{\alpha}_n
\end{equation}
for each $n$, we pass to the limit and get
\begin{equation}
-\partial_t \phi + H(x,D\phi) \leq \alpha
\end{equation}
in the sense of distributions. Finally the terminal condition holds, thanks to Lemma \ref{lem:upper_bound}. 

It remains to check that $(\phi, \alpha)$ minimizes \eqref{pr:relaxed}. We just proved that $(\phi, \alpha)\in {\mathcal K}$. By convexity of $F^*$ and weak convergence of $(\tilde{\alpha}_n)$, 
$$
\int_0^T \int_{\bb{T}^d} F^*(x,\alpha) dx dt \leq \liminf_n \int_0^T \int_{\bb{T}^d} F^*(x,\tilde{\alpha}_n) dx dt.
$$
Fix $\ep>0$ small. By (\ref{eq:upper_bound}),  
$$
-\int_{\bb{T}^d} \phi_n(0)m_0\geq - \frac{1}{\ep}\int_0^\ep \int_{\bb{T}^d} \phi_n(s,x)m_0(x)dxds -C \ep^{\nu}.
$$
So, by $L^1$ convergence of $(\phi_n)$,  
$$
\liminf_n -\int_{\bb{T}^d} \phi_n(0)m_0\geq - \frac{1}{\ep}\int_0^\ep \int_{\bb{T}^d} \phi(x,s)m_0(x)dxds -C \ep^{\nu}.
$$
Recalling \eqref{eq:minseq} we finally have 
$$
\int_0^T \int_{\bb{T}^d} F^*(x,\alpha) dx dt- \frac{1}{\ep}\int_0^\ep \int_{\bb{T}^d} \phi(x,s)m_0(x)dxds -C \ep^{\nu}
\leq \inf_{\phi \in \s{K}_0} \s{A}(\phi), 
$$
which, after letting $\ep\to0$, implies the desired result. 

\end{proof}

A straightforward consequence of the proof is the following approximation of the optimal pair $(\phi,\alpha)$ of Problem \ref{pr:relaxed}. \medskip

\begin{corollary}\label{cor:approx} There exists  $(\phi,\alpha)\in \s{K}$ which is optimal for Problem \ref{pr:relaxed} and for which there exists a sequence of  maps $(\phi^n, \alpha^n)\in \s{K}$, such that, for each $n$, $\phi^n$ is  $C^1$,  $(\phi^n)$  is bounded in $BV$ and in $L^\infty([t,T]\times \bb{T}^d)$ for any $t\in (0,T)$, $(\phi^n)$ converges to $\phi$ in $L^1$ while $(D\phi^n)$ converge to $(D\phi)$ weakly in $L^r$,  $(\alpha^n_+)$ is bounded in $L^p$,  $(\alpha^n)$ converges weakly in $L^1$ to $\alpha$ while 
$$
\int_0^T\int_{\bb{T}^d} F^*(x,\alpha^n)dxdt \to \int_0^T\int_{\bb{T}^d} F^*(x,\alpha)dxdt.
$$
\end{corollary}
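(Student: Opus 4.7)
The plan is to extract the approximating sequence directly from the proof of Theorem \ref{thm:existence}: take $\phi^n \in \s{K}_0$ to be the $C^1$ minimizing sequence already used there, and set $\alpha^n := \tilde\alpha_n$, where $\tilde\alpha_n$ is the truncation defined in Step 3 (so that $(\tilde\alpha_n)$ is uniformly integrable). Note that $(\phi^n,\alpha^n)\in\s{K}$ because $\tilde\alpha_n$ pointwise dominates $-\partial_t\phi^n+H(x,D\phi^n)$ by construction. All of the stated uniform bounds and convergence properties—$\phi^n$ bounded in $BV$ and in $L^\infty([t,T]\times\bb{T}^d)$ for $t>0$; $\phi^n\to\phi$ in $L^1$; $D\phi^n\rightharpoonup D\phi$ weakly in $L^r$; $(\alpha^n_+)$ bounded in $L^p$ (use $(\tilde\alpha_n)_+\le (\alpha_n)_+ + 1$); and $\alpha^n\rightharpoonup\alpha$ weakly in $L^1$—are already verified in Steps 1--5 of the proof of Theorem \ref{thm:existence}, so for those items it suffices to cite that argument.

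The genuinely new ingredient is the convergence $\int F^*(x,\alpha^n)\to\int F^*(x,\alpha)$. Convexity of $F^*(x,\cdot)$ together with the weak $L^1$ convergence of $\alpha^n$ immediately gives
$$
\int_0^T\!\!\int_{\bb{T}^d} F^*(x,\alpha) \le \liminf_n \int_0^T\!\!\int_{\bb{T}^d} F^*(x,\alpha^n).
$$
For the matching upper bound, I use that $(\phi^n,\alpha^n)$ is a minimizing sequence, so
$$
\int_0^T\!\!\int_{\bb{T}^d} F^*(x,\alpha^n)\,dxdt - \int_{\bb{T}^d}\phi^n(0)m_0 \;\longrightarrow\; \inf_{\s{K}}\s{A} \;=\; \int_0^T\!\!\int_{\bb{T}^d} F^*(x,\alpha)\,dxdt - \int_{\bb{T}^d}\phi(0)m_0,
$$
which reduces the desired bound to establishing $\limsup_n\int\phi^n(0)m_0 \le \int\phi(0)m_0$.

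For this last inequality I invoke Lemma \ref{lem:upper_bound} with $t_1=0$, $t_2=s$, $y=x$, yielding $\phi^n(0,x)\le \phi^n(s,x)+Cs^\nu(\|\alpha^n_+\|_p+1)$. Integrating against $m_0$, averaging over $s\in(0,\epsilon)$, and using the uniform $L^p$ bound on $(\alpha^n)_+$, I get
$$
\int_{\bb{T}^d}\phi^n(0)m_0 \;\le\; \frac{1}{\epsilon}\int_0^\epsilon\!\!\int_{\bb{T}^d}\phi^n(s,x)m_0(x)\,dx\,ds + C\epsilon^\nu.
$$
The $L^1$ convergence $\phi^n\to\phi$ lets me pass to the limsup in $n$, and then, since $\phi\in BV$ admits a trace $\phi(0)\in L^1(\bb{T}^d)$ with $\frac{1}{\epsilon}\int_0^\epsilon\phi(s,\cdot)\,ds\to\phi(0,\cdot)$ in $L^1(\bb{T}^d)$, sending $\epsilon\to 0$ produces the required inequality.

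The main obstacle in this plan is not really a conceptual one—most assertions are already proved inside Theorem \ref{thm:existence}—but the careful bookkeeping at $t=0$: one has to control $\int\phi^n(0)m_0$ by quantities that do pass to the limit, which is exactly what the Lemma \ref{lem:upper_bound}+averaging+BV-trace combination accomplishes. Everything else is a direct consequence of the work already done.
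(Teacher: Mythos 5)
Your proposal is correct and is essentially the argument the paper has in mind: the corollary is stated as a "straightforward consequence of the proof" of Theorem \ref{thm:existence}, and your extraction of $(\phi_n,\tilde\alpha_n)$ from Steps 1--5 together with the averaging-in-time argument at $t=0$ (via Lemma \ref{lem:upper_bound} and the BV trace) reproduces precisely Step 5 of that proof, which already established the $F^*$-convergence for the minimizing sequence.
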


\section{Weak solutions} \label{sec:weak_solutions}

The purpose of this section is to define and characterize the solutions of System (\ref{eq:mfg}).
It is divided into three subsections.
We define weak solutions in Section \ref{sec:definition_weak}. Then we prove their existence and (partial) uniqueness in Section \ref{sec:existence_weak}.
Finally, we show that the solution  satisfies a classical energy estimate (Section \ref{sec:energy_inequality}).
The energy estimate will be applied later in Section \ref{sec:asymp} to study long time average behavior.

\subsection{Definition of weak solutions}
\label{sec:definition_weak}

\begin{definition} \label{def:weak}
A pair $(\phi,m) \in BV((0,T) \times \bb{T}^d) \times L^q((0,T) \times \bb{T}^d)$ is called a weak solution to the system (\ref{eq:mfg}) if it satisfies the following conditions.
\begin{enumerate}
\item $D\phi\in L^r$ and  the maps $mf(x, m)$, $mH^*\left(x,-D_pH(x,D\phi)\right) $ and   $mD_p H(x,D\phi)$ are integrable, 
\item $\phi$ satisfies  a first-order Hamilton-Jacobi inequality
\begin{equation} \label{eq:hjb_weak}
-\partial_t \phi + H(x,D\phi) \leq  f(x,m) 
\end{equation}
in the sense of distributions, the boundary condition $\phi(T,\cdot)\leq \phi_T$ in the sense of trace and the following equality
\begin{multline}
\int_0^T \int_{\bb{T}^d}m(t,x) \left( H(x,D\phi(t,x))-\lg D\phi(t,x), D_pH(x, D\phi(t,x))\rg -f(x,m(t,x))\right)dxdt \\
= \int_{\bb{T}^d} (\phi_T(x)m(T,x))-\phi(0,x)m_0(x))dx \label{eq:ibp_weak}
\end{multline}

\item $m$ satisfies the continuity equation
\begin{equation} \label{eq:continuity_weak}
\partial_t m - \mathrm{div}~(mD_p H(x,D\phi)) = 0 ~~~\text{in}~~(0,T) \times \bb{T}^d, ~~~ m(0) = m_0
\end{equation}
in the sense of distributions.
\end{enumerate}
\end{definition} \medskip

\begin{remark}{\rm 
From Corollary \ref{cor:egalite}, we have by \eqref{eq:ibp_weak} that 
\begin{equation} \label{eq:almost_everywhere}
-\partial_t\phi^{ac}(t,x)+H(x,D\phi(t,x))= f(x,m(t,x)) ~~ m-\text{a.e. in}~~(0,T)\times \bb{T}^d,
\end{equation}
where $\partial_t\phi^{ac}$ is the absolutely continuous part of the measure $\partial_t\phi$. 
}\end{remark} \medskip

\begin{remark}\label{remdef}{\rm  Again in view of \eqref{eq:ibp_weak} and Lemma \ref{lem:integration_by_parts}, inequalities \eqref{eq:integration_by_parts} and \eqref{eq:integration_by_parts2} are actually equalities for the solution $(\phi, \alpha:= f(\cdot, m))$. 
}\end{remark}

Let us make a few comments on the definition of weak solution, which draws its inspiration from \cite{cardaliaguet2012geodesics} and is in the same vein as \cite{cardaliaguet2013weak,graber2013optimal}.
First, we mention that the integrals in (\ref{eq:ibp_weak}) are well-defined: because $mf(\cdot,m)$ and $mH^*(\cdot,-D_pH(x,D\phi))$ are integrable, so the fact that $m \geq 0$ and the definition of Fenchel conjugate prove that the left-hand side is integrable, while the right-hand side is integrable by assumptions on $\phi_T$ and $m_0$ (namely that they are bounded).
We note in particular that $f(\cdot,m)$ is in $L^p$ be the growth assumptions on $f$ (\ref{eq:cost_growth}) and that $H(\cdot,D\phi)$ is integrable by the growth assumptions on the Hamiltonian (\ref{eq:hamiltonian_bounds}) and the fact that $D\phi \in L^r$.
This gives meaning to Equation (\ref{eq:hjb_weak}).

Next, we comment on the above definition in relation to System (\ref{eq:mfg}).
The continuity equation is naturally dealt with in part 3 of Definition \ref{def:weak}.
On the other hand, the Hamilton-Jacobi equation is more difficult to interpret, as pointed out in the references \cite{cardaliaguet2012geodesics,cardaliaguet2013weak,graber2013optimal}.
In particular, we cannot expect the Hamilton-Jacobi equation to hold in the viscosity sense, since neither $\phi$ nor $m$ is continuous.
Equation (\ref{eq:almost_everywhere}) is close to giving an ``almost everywhere" sense to the Hamilton-Jacobi equation, but only on $\{m > 0\}$ and for the absolutely continuous part of $\partial_t \phi$.

In practice it is often useful to know that a solution can be approximated by smoother maps. This is the aim of the next definition: 

\begin{definition}\label{def:good} We say that a weak solution $(\phi, m)$ to the MFG system is ``good" if there exists a sequence $(\phi^n, \alpha^n)$ such that 
for each $n$, $\phi^n$ is  $C^1$,  $(\phi^n)$  is bounded in $BV$ and in $L^\infty([t,T]\times \bb{T}^d)$ for any $t\in (0,T)$, $(\phi^n)$ converges to $\phi$ in $L^1$ while $(D\phi^n)$ converge to $(D\phi)$ weakly in $L^r$, $(\alpha^n_+)$ is bounded in $L^p$, $(\alpha^n)$ converges weakly in $L^1$ to $\alpha$ while 
$$
\int_0^T\int_{\bb{T}^d} F^*(x,\alpha^n)dxdt \to \int_0^T\int_{\bb{T}^d} F^*(x,\alpha)dxdt.
$$
\end{definition}

\subsection{Existence and uniqueness of weak solutions}
\label{sec:existence_weak}

The main result of this section is the following. \medskip
\begin{theorem}[Existence and (partial) uniqueness of weak solutions] \label{thm:minimizers_weak}
(i) If $(m,w) \in \s{K}_1$ is a minimizer of Problem \ref{eq:dual} and $(\phi,\alpha) \in \tilde{\s{K}}$ is a minimizer of  Problem  \ref{pr:relaxed}, then $(\phi,m)$ is a weak solution of (\ref{eq:mfg}) and $\alpha(t,x) = f(x,m(t,x))$ almost everywhere. 


(ii) Conversely, if $(\phi,m)$ is a weak solution of (\ref{eq:mfg}), then there exist functions $w, \alpha$ such that $(\phi,\alpha) \in \s{K}$ is a minimizer of Problem  \ref{pr:relaxed} and $(m,w) \in \s{K}_1$ is a minimizer of Problem  \ref{pr:dual}.

(iii) If $(\phi,m)$ and $(\phi',m')$ are both weak solutions to (\ref{eq:mfg}), then $m = m'$ almost everywhere while $\phi = \phi'$ almost everywhere in the set $\{m > 0\}$.
\end{theorem}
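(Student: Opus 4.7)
\noindent\textbf{Proof plan for Theorem \ref{thm:minimizers_weak}.}

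For \textbf{(i)} I would start from the duality identity $\s{A}(\phi,\alpha) + \s{B}(m,w) = 0$ given by Theorem \ref{thm:dual} and Proposition \ref{prop:relaxed}, and track the equality cases in the three Young-type inequalities appearing in the proof of Proposition \ref{prop:relaxed}: (a) the Fenchel inequality $F^*(x,\alpha) + F(x,m) \geq \alpha m$; (b) the pointwise Fenchel estimate $H(x,D\phi) + L(x,w/m) \geq -\langle D\phi, w/m\rangle$ hidden inside Lemma \ref{lem:integration_by_parts}; and (c) the integration-by-parts inequality of Lemma \ref{lem:integration_by_parts} itself, whose equality case is governed by Corollary \ref{cor:egalite}. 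Equality in (a) forces $\alpha = f(x,m)$ a.e.\ on $\{m>0\}$ (since $F(x,\cdot)$ is differentiable with derivative $f$ there); equality in (b) forces $w = -m\,D_pH(x,D\phi)$ a.e.\ on $\{m>0\}$; equality in (c) yields $-\partial_t\phi^{ac} + H(x,D\phi) = \alpha$ $m$-a.e. The HJ inequality \eqref{eq:hjb_weak} then follows from $(\phi,\alpha)\in\s{K}$; the continuity equation \eqref{eq:continuity_weak} is obtained by substituting the expression for $w$ into $\partial_t m + \mathrm{div}\,w = 0$; and \eqref{eq:ibp_weak} is exactly the equality form of Lemma \ref{lem:integration_by_parts}. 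The integrability conditions of Definition \ref{def:weak}(1) follow from $m \in L^q$, $D\phi \in L^r$ and the growth bounds on $f, L, D_pH$.

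For \textbf{(ii)}, given a weak solution $(\phi, m)$, I would set $\alpha := f(\cdot, m)$ on $\{m > 0\}$ and $w := -m\,D_pH(x, D\phi)$. Memberships $(\phi,\alpha) \in \s{K}$ and $(m,w) \in \s{K}_1$ follow directly from Definition \ref{def:weak} together with the growth bounds on $f$ and $D_pH$. With these choices the Fenchel inequalities (a) and (b) above become equalities, so that \eqref{eq:ibp_weak} translates exactly into $\s{A}(\phi,\alpha) + \s{B}(m,w) = 0$, and the duality of Proposition \ref{prop:relaxed} then forces both pairs to be minimizers.

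For \textbf{(iii)}, two weak solutions induce by (ii) minimizers of Problem \ref{pr:dual}, and the uniqueness part of Theorem \ref{thm:dual} gives $m = m'$ a.e.\ and $w = w'$. The identity $-m\,D_pH(x,D\phi) = w = -m\,D_pH(x,D\phi')$ on $\{m>0\}$ together with the convexity of $H(x,\cdot)$ forces $H(x,\cdot)$ to be affine on the segment $[D\phi, D\phi']$; combining with $-\partial_t\phi^{ac} + H(x,D\phi) = f(x,m) = -\partial_t\phi'^{ac} + H(x,D\phi')$ $m$-a.e.\ then shows that $\psi := \phi - \phi'$ satisfies the linear transport equation $\partial_t\psi^{ac} = D_pH(x,D\phi)\cdot D\psi$ on $\{m>0\}$. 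Coupling with the continuity equation for $m$ yields formally $\tfrac{d}{dt}\int_{\T^d} m\psi^2\,dx = 0$, so that $\int m(t)\psi(t)^2\,dx = \int m_0\psi(0)^2\,dx$ for every $t$.

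The main obstacle is therefore to prove $\psi(0,\cdot) = 0$ a.e., which combined with the conservation above would give $\psi = 0$ $m$-a.e. My plan is to test the candidate $(\max(\phi,\phi'),\alpha)$ in Problem \ref{pr:relaxed}: provided this pair belongs to $\s{K}$, the inequality $\s{A}(\max(\phi,\phi'),\alpha) \leq \s{A}(\phi,\alpha) = \inf \s{A}$ (since $\max(\phi,\phi') \geq \phi$) forces $\s{A}(\max(\phi,\phi'),\alpha) = \inf \s{A}$, whence $\int_{\T^d} \max(\phi(0),\phi'(0))\,m_0 = \int_{\T^d} \phi(0)\,m_0$; since $m_0 > 0$ everywhere this yields $\phi(0) \geq \phi'(0)$ a.e., and by symmetry $\phi(0) = \phi'(0)$ a.e. The delicate step is justifying that $\max(\phi,\phi')$ is a distributional subsolution of the HJ inequality against $\alpha$: the point is that the singular part of $\partial_t\max(\phi,\phi')$ carried by the contact set $\{\phi = \phi'\}$ has the correct (non-negative) sign, so that the inequality is preserved. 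I would make this rigorous in the BV setting by smoothing $\phi,\phi'$ via Corollary \ref{cor:approx}, taking the $\max$ of the smooth approximations (where the subsolution property is immediate from convexity of $H$) and passing to the limit; the same regularization would also legitimize the formal transport/conservation computation above.
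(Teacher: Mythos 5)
Your treatment of parts (i) and (ii) matches the paper's own proof: for (i), the paper indeed starts from $\s{A}(\phi,\alpha)+\s{B}(m,w)=0$, chases the pointwise Fenchel equality $F^*(x,\alpha)+F(x,m)=\alpha m$ to get $\alpha=f(x,m)$, obtains \eqref{eq:ibp_weak} from the equality case of Lemma~\ref{lem:integration_by_parts}, and then shows $w=-mD_pH(x,D\phi)$ by exactly the convolution/Fatou scheme you sketch; for (ii) the paper uses the same convexity-of-$F^*$/$F$ inequalities combined with Lemma~\ref{lem:integration_by_parts} to show both candidates are minimizers. So (i) and (ii) are essentially the same argument.

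For (iii), note that the paper does not give a self-contained proof; it refers to Theorem 3.2(iii) of \cite{graber2013optimal}. Your starting point (uniqueness of the dual minimizer gives $m=m'$, $w=w'$, hence $\alpha=\alpha'$) is correct and is the right first step. Your max-of-subsolutions idea is also the right second ingredient. However, the transport/conservation-law detour --- deriving $\partial_t\psi^{ac}=D_pH(x,D\phi)\cdot D\psi$ $m$-a.e.\ and then arguing $\tfrac{d}{dt}\int m\psi^2=0$ --- is both hard to justify and unnecessary. It is hard to justify because: (a) the approximants supplied by Corollary~\ref{cor:approx} satisfy only the HJ \emph{inequality} $-\partial_t\phi^n+H(x,D\phi^n)\le\alpha^n$, not an equality, so the regularized $\psi^n:=\phi^n-\phi'^n$ does not solve (even approximately) a transport equation and the product-rule computation does not pass to the limit; (b) the singular parts of $\partial_t\phi,\partial_t\phi'$ are only known to be nonnegative individually, so $\partial_t\psi^s$ has no sign, and the chain rule for $m\psi^2$ with $\psi\in BV$, $m\in L^q$ is not available at this regularity. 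It is also unnecessary: once you know $(\hat\phi,\alpha)\in\s{K}$ with $\hat\phi:=\max(\phi,\phi')$, you can apply the max-trick \emph{at every time}, not only at $t=0$. Since $\s{A}(\hat\phi,\alpha)\le\s{A}(\phi,\alpha)=\inf\s{A}$, $(\hat\phi,\alpha)$ is a minimizer, hence by part~(i) $(\hat\phi,m)$ is a weak solution, and by Remark~\ref{remdef} the inequalities \eqref{eq:integration_by_parts2}--\eqref{eq:integration_by_parts3} hold with equality for both $(\phi,\alpha)$ and $(\hat\phi,\alpha)$ (with the \emph{same} $m,w,\alpha$). Subtracting the two versions of \eqref{eq:integration_by_parts2} gives $\int_{\T^d}\hat\phi(t)m(t)=\int_{\T^d}\phi(t)m(t)$ for a.e.\ $t\in(0,T)$, which combined with $\hat\phi\ge\phi$ and $m\ge0$ yields $\phi'\le\phi$ $m$-a.e.\ in $(0,T)\times\T^d$; symmetry finishes the proof. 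What does still need care is the claim that $\hat\phi$ is a distributional subsolution in the $BV$ class $\s{K}$ --- your idea of smoothing via Corollary~\ref{cor:approx}, taking the max of the $C^1$ approximants (which is Lipschitz, so the a.e.\ subsolution inequality and the convexity of $H$ give the subsolution property), and passing to the limit is the correct way to do it, and should be carried out explicitly. The observation that equal $D_pH$ values force $H$ to be affine on $[D\phi,D\phi']$ is correct but not needed once the conservation-law detour is dropped.
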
\medskip

\begin{remark}{\rm The existence of a minimizer $(\phi,\alpha) \in \s{K}$ of Problem \ref{pr:relaxed} such that $\phi \in L^\infty([t,T]\times \bb{T}^d)$ for any $t \in (0,T)$ is guaranteed by Theorem \ref{thm:minimizers_weak}. Moreover, Corollary \ref{cor:approx} states that there exists ``good" solutions. 
}\end{remark} \medskip

\begin{remark}{\rm If we assume that $f$ is bounded below, then following \cite{cardaliaguet2013weak} one can show the existence of a solution $(\phi,m)$ such that $\phi$ is  continuous  and satisfies the terminal condition $\phi(T,\cdot)=\phi_T$. 
}\end{remark}

\begin{proof}
(i) {\em Step 1.} Suppose $(m,w) \in \s{K}_1$ is a minimizer of Problem  \ref{eq:dual} and $(\phi,\alpha) \in \s{K}$ is a minimizer of Problem  \ref{pr:relaxed}.
By Proposition \ref{prop:relaxed} we have
\begin{equation}
\int_0^T \int_{\bb{T}^N} F^*(x,\alpha) + F(x,m) + mL\left(x,\frac{w}{m}\right) dxdt + \int_{\bb{T}^N} \phi_T m(T) - \phi(0)m_0 dx = 0.
\end{equation}
We claim that $\alpha = f(x,m)$. In general
\begin{equation} \label{eq:F_Fstar_geq}
F^*(x,\alpha(t,x)) + F(x,m(t,x)) - \alpha(t,x)m(t,x) \geq 0, 
\end{equation}
so that 
$$
\int_0^T \int_{\bb{T}^N}  \alpha(t,x)m(t,x)+ mL\left(x,\frac{w}{m}\right) dxdt + \int_{\bb{T}^N} \phi_T m(T) - \phi(0)m_0 dx \leq 0.
$$
Using Lemma \ref{lem:integration_by_parts}, this inequality has to be an equality and therefore  the inequality in Equation (\ref{eq:F_Fstar_geq}) is in fact equality almost everywhere: hence, we have both
\begin{equation} \label{eq:alpha_equals_f}
\alpha(t,x) = f(x,m(t,x))
\end{equation}
almost everywhere and
\begin{equation} \label{eq:ibp_equality}
\int_0^T \int_{\bb{T}^N} \alpha m + mL\left(x,\frac{w}{m}\right) dxdt + \int_{\bb{T}^N} \phi_T m(T) - \phi(0)m_0 dx = 0.
\end{equation}
Applying Equations (\ref{eq:alpha_equals_f}) and (\ref{eq:ibp_equality}) to Equation (\ref{eq:integration_by_parts}) yields (\ref{eq:ibp_weak}). Moreover, considering that $(\phi,\alpha) \in \s{K}$ and Equation (\ref{eq:alpha_equals_f}), we have $-\partial_t \phi + H(x,D\phi) \leq f(x,m)$ in distribution and $\phi(T) \leq \phi_T$ in the sense of trace.

{\em Step 2.} We wish to show that (\ref{eq:continuity_weak}) holds. We know that
\begin{equation}
\partial_t m + \mathrm{div}~w = 0
\end{equation}
in distribution. It is enough to show therefore that
\begin{equation}
w = -mD_p H(x,D\phi),
\end{equation}
or equivalently that
\begin{equation}
- \langle w(t,x),D\phi(t,x) \rangle = m(t,x)H(x,D\phi(t,x)) + m(t,x)L\left(x,\frac{w(t,x)}{m(t,x)}\right)
\end{equation}
almost everywhere.

Let $m_\delta = m_{\epsilon(\delta),\delta}$ be the smooth approximation of $m$ obtained in the proof of Lemma \ref{lem:integration_by_parts} by convolution, and define $w_\delta$ to be the analogous smooth approximation of $w$. Since $-\partial_t \phi + H(x,D\phi) \leq \alpha$ in distribution, we have 
\begin{align*}
0 &\leq \iint mH(x,D\phi) + mL\left(x,\frac{w}{m}\right) + \langle w, D\phi \rangle \\
&\leq \liminf_{\delta \to 0} \iint m_\delta H(x,D\phi) + m L\left(x,\frac{w}{m}\right) + \langle w_\delta, D\phi \rangle\\
&\leq \liminf_{\delta \to 0} \iint (\partial_t \phi + \alpha)  m_\delta + m L\left(x,\frac{w}{m}\right) - (\mathrm{div}~ w_\delta) \phi
\end{align*}
using Fatou's Lemma in the second line. Since $m_\delta,w_\delta$ are obtained through convolution we have
\begin{equation}
\partial_t m_\delta + \mathrm{div}~w_\delta = 0,
\end{equation}
hence
\begin{align*}
0 &\leq \iint mH(x,D\phi) + mL\left(x,\frac{w}{m}\right) + \langle w, D\phi \rangle \\
&\leq \liminf_{\delta \to 0} \iint \partial_t \phi m_\delta + \phi \partial_t m_\delta + \alpha m_\delta + m L\left(x,\frac{w}{m}\right)\\
&= \liminf_{\delta \to 0} \int_{\bb{T}^d} \phi(T)m_\delta(T) - \phi(0)m_\delta(0) +  \iint \alpha m_\delta + m L\left(x,\frac{w}{m}\right)
\end{align*}
The proof of Lemma \ref{lem:integration_by_parts} shows that
\begin{equation}
\lim_{\delta \to 0} \int_{\bb{T}^d} \phi(T)m_\delta(T) - \phi(0)m_\delta(0) \leq \int_{\bb{T}^d} \phi_T m(T) - \phi(0)m_0.
\end{equation}
On the other hand, $m_\delta \to m$ in $L^q$, but $\alpha$ is not necessarily $L^p$. We can use $\alpha_M:= \alpha\vee (-M)$ (where $M$ is large) as an upper bound. Since $\alpha_M\in L^p$, we can let $\delta\to 0$ to get:
\begin{equation}
0 \leq \iint mH(x,D\phi) + mL\left(x,\frac{w}{m}\right) + \langle w, D\phi \rangle \leq \int_{\bb{T}^d} \phi_T m(T) - \phi(0)m_0 + \iint \alpha_M m + m L\left(x,\frac{w}{m}\right).
\end{equation}
The next step is to let $M \to \infty$. Note that $\alpha_M m \to \alpha m$ and $|\alpha_M m| \leq |\alpha m| = |\alpha| m$. Recall that $\alpha m$ is integrable from the proof of Lemma \ref{lem:integration_by_parts}.  
Hence by the dominated convergence theorem $\iint \alpha_M m \to \iint \alpha m$ and we have
\begin{equation}
0 \leq \iint mH(x,D\phi) + mL\left(x,\frac{w}{m}\right) + \langle w, D\phi \rangle \leq \int_{\bb{T}^d} \phi_T m(T) - \phi(0)m_0 + \iint \alpha m + m L\left(x,\frac{w}{m}\right) = 0
\end{equation}
by Equation (\ref{eq:ibp_equality}). This completes Part (i) of Theorem \ref{thm:minimizers_weak}.

(ii) Suppose now that $(\phi,m)$ is a weak solution of (\ref{eq:mfg}). Set $w = -mD_p H(x,D\phi)$ and $\alpha(t,x) = f(x,m(t,x))$. By definition of weak solution $\alpha \in L^1$.  Moreover, since $m \in L^q$ and $f$ is increasing in $m$, it follows by growth condition (\ref{eq:cost_growth}) that $\alpha_+ \in L^p$. We thus have that $(\phi,\alpha) \in \s{K}$, and $(m,w) \in \s{K}_1$. We want to show that $(\phi,\alpha)$ minimizes $\s{A}$ and $(m,w)$ minimizes $\s{B}$.

Begin with $(\phi,\alpha)$. Let $(\phi',\alpha') \in \s{K}$. By the convexity of $F$ in the second variable, we have
\begin{align*}
\s{A}(\phi',\alpha') &= \int_0^T \int_{\bb{T}^d} F^*(x,\alpha'(t,x))dx dt - \int_{\bb{T}^d} \phi'(0,x)m_0(x)dx\\
&\geq \int_0^T \int_{\bb{T}^d} F^*(x,\alpha(t,x)) + \partial_\alpha F^*(x,\alpha(t,x))(\alpha'(t,x) - \alpha(t,x)) dx dt - \int_{\bb{T}^d} \phi'(0,x)m_0(x)dx\\
&\geq \int_0^T \int_{\bb{T}^d} F^*(x,\alpha(t,x)) + m(t,x)(\alpha'(t,x) - \alpha(t,x)) dx dt - \int_{\bb{T}^d} \phi'(0,x)m_0(x)dx,
\end{align*}
where $\alpha' m$ and $\alpha m$ both belong to $L^1$ thanks to Lemma \ref{lem:integration_by_parts}. 
Using Equation (\ref{eq:ibp_weak}) we have
\begin{equation} \label{eq:ibp_2}
 \iint m \alpha =  - \iint mL\left(x,\frac{w}{m}\right) + \int_{\bb{T}^d} \phi(0)m_0 - \phi_T m(T).
\end{equation}
On the other hand, from Lemma \ref{lem:integration_by_parts} we have
\begin{equation}
\iint m\alpha' \geq - \iint mL\left(x,\frac{w}{m}\right) + \int_{\bb{T}^d} \phi'(0)m_0 - \phi_T m(T).
\end{equation}
Substituting into the previous estimate, we get
$$
\s{A}(\phi',\alpha') \geq \int_0^T \int_{\bb{T}^d} F^*(x,\alpha(t,x)) dx dt - \int_{\bb{T}^d} \phi(0,x)m_0(x)dx= \s{A}(\phi,\alpha),
$$
and $(\phi,\alpha)$ is a minimizer of $\s{A}$. 

The argument for $(m,w)$ is similar. Let $(m',w')$ minimize $\s{B}$. Then because $F$ is convex in the second variable, we have
\begin{align*}
\s{B}(m',w') &= \int_{\bb{T}^d} \phi_T m'(T) + \iint m'L\left(x,\frac{w'}{m'}\right) + F(x,m')\\
&\geq \int_{\bb{T}^d} \phi_T m'(T) + \iint m'L\left(x,\frac{w'}{m'}\right) + F(x,m) + f(x,m)(m'-m)\\
&= \int_{\bb{T}^d} \phi_T m'(T) + \iint m'L\left(x,\frac{w'}{m'}\right) + F(x,m) + \alpha(m'-m).
\end{align*}
Now we have just shown that $(\phi,\alpha)$ minimizes $\s{A}$. So by part (i), we have that $(\phi,m')$ is a weak solution of (\ref{eq:mfg}), and in particular
\begin{equation}
\iint \alpha m' + m'L\left(x,\frac{w'}{m'}\right) + \int_{\bb{T}^d} \phi_T m'(T) = \int_{\bb{T}^d} \phi(0)m_0.
\end{equation}
Combine this with (\ref{eq:ibp_2}) to get
\begin{equation}
\s{B}(m',w') \geq \int_{\bb{T}^d} \phi_T m(T) + \iint mL\left(x,\frac{w}{m}\right) + F(x,m) = \s{B}(m,w),
\end{equation}
as desired. This completes the proof of part (ii) of Theorem \ref{thm:minimizers_weak}.

(iii) The proof of uniqueness follows along the same lines Theorem 3.2. (iii) in \cite{graber2013optimal}, so we omit it.
\end{proof}

\subsection{An energy inequality} \label{sec:energy_inequality}

 The following energy inequality is standard for  solutions of MFG systems and goes back to the work of Lasry and Lions \cite{lasry07}. \medskip

\begin{proposition}\label{prop:energie}  Let $(\phi^1,m^1)$ and $(\phi^2,m^2)$ be two weak solutions of the MFG system with respective boundary conditions $(m^1_0,\phi^1_f)$ and $(m^2_0,\phi^2_f)$. We assume that  $(\phi^1,m^1)$ and $(\phi^2,m^2)$ are ``good" solutions in the sense of Definition \ref{def:good}. Then, for almost all $0\leq t_1\leq t_2\leq T$,  
$$
\begin{array}{l}
\ds \int_{t_1}^{t_2}\int_{\bb{T}^d} m^2\left( H(x,D \phi^1)-H(x,D \phi^2)-\lg D_pH(x,D\phi^2), D( \phi^1-\phi^2)\rg\right)\ dxdt \\
\ds \qquad +
\int_{t_1}^{t_2}\int_{\bb{T}^d} m^1\left( H(x, D\phi^2)-H(x, D \phi^1)-\lg D_pH(x,D  \phi^1), D(\phi^2-\phi^1)\rg\right)\ dxdt \\
\ds \qquad +
\int_{t_1}^{t_2}\int_{\bb{T}^d} (f(x,m^2)-f(x, m^1))(m^2-m^1) \ dxdt  \\
\qquad \qquad \ds \leq 
\left[\int_{\bb{T}^d} (m^2(t)- m^1(t)) (\phi^2(t)-\phi^1(t))\ dx\right]_{t_1}^{t_2}
\end{array}$$
\end{proposition}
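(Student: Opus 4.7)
The plan is to make rigorous the formal Lasry--Lions energy computation
$$\frac{d}{dt}\int_{\bb{T}^d}(\phi^2-\phi^1)(m^2-m^1)\,dx = \sum_{i,j\in\{1,2\}}(-1)^{i+j}\frac{d}{dt}\int_{\bb{T}^d}\phi^i m^j\,dx,$$
where each summand is evaluated via the HJ equation for $\phi^i$, the continuity equation for $m^j$, and an integration by parts on the divergence term. For smooth solutions, direct algebra reorganizes the resulting $H$-contributions into the two convexity defects $m^i[H(x,D\phi^j)-H(x,D\phi^i)-\lg D_pH(x,D\phi^i),D(\phi^j-\phi^i)\rg]$ that appear in the statement, the $f$-contributions collapse to $(f(x,m^2)-f(x,m^1))(m^2-m^1)$, and integrating over $[t_1,t_2]$ produces the energy inequality with the asserted signs (the direction being inherited from the subsolution inequality satisfied by our weak solutions).

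To carry this out rigorously, I combine the ``good'' approximations $(\phi^{i,n},\alpha^{i,n})$ of Definition~\ref{def:good} (with $\phi^{i,n}\in C^1$ and $-\partial_t\phi^{i,n}+H(x,D\phi^{i,n})\leq\alpha^{i,n}$) with the space--time mollifications $(m^{j,\delta},w^{j,\delta})$ of $(m^j,w^j)$ employed in the proof of Lemma~\ref{lem:integration_by_parts}. At fixed $n$ and $\delta$ the smoothness of both factors permits a classical integration by parts,
$$\Big[\int_{\bb{T}^d}\phi^{i,n}m^{j,\delta}\Big]_{t_1}^{t_2} = \int_{t_1}^{t_2}\int_{\bb{T}^d}\bigl((\partial_t\phi^{i,n})m^{j,\delta} - \lg D\phi^{i,n},w^{j,\delta}\rg\bigr)\,dxdt,$$
and the subsolution inequality yields a one-sided estimate with integrand $m^{j,\delta}[H(x,D\phi^{i,n})-\alpha^{i,n}] - \lg D\phi^{i,n},w^{j,\delta}\rg$. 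Crucially I would \emph{not} apply Fenchel--Young at this stage: doing so would collapse to the looser Lemma~\ref{lem:integration_by_parts} bound and lose the convexity defect. Instead the linear term $\lg D\phi^{i,n},w^{j,\delta}\rg$ is kept explicit so that, after $\delta\to 0$, it converts into $-m^j\lg D\phi^i,D_pH(x,D\phi^j)\rg$ via $w^j=-m^jD_pH(x,D\phi^j)$.

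Limits are then taken $n\to\infty$ first, $\delta\to 0$ second. At fixed $\delta$, the smooth and uniformly bounded factors $m^{j,\delta}$ and $w^{j,\delta}$ pair legitimately with the weakly convergent sequences $\alpha^{i,n}\rightharpoonup f(\cdot,m^i)$ in $L^1$ and $D\phi^{i,n}\rightharpoonup D\phi^i$ in $L^r$, while convexity of $H(x,\cdot)$ together with $m^{j,\delta}\geq 0$ provides the lower-semicontinuity $\liminf_n\int m^{j,\delta}H(x,D\phi^{i,n})\geq \int m^{j,\delta}H(x,D\phi^i)$. Sending $\delta\to 0$ then uses the $L^q$ convergence $m^{j,\delta}\to m^j$, the strong convergence of $w^{j,\delta}$, and the local $L^\infty$ bound on $\phi^i$ over $[t,T]\times\bb{T}^d$ for every $t>0$ obtained in Step~4 of the proof of Theorem~\ref{thm:existence}. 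The outcome is the mixed-index inequality
$$\Big[\int \phi^i m^j\Big]_{t_1}^{t_2}\ \geq\ \int_{t_1}^{t_2}\int m^j\bigl[H(x,D\phi^i)-\lg D\phi^i,D_pH(x,D\phi^j)\rg - f(x,m^i)\bigr]\,dxdt,$$
which for the diagonal case $i=j$ becomes an \emph{equality} by Remark~\ref{remdef} together with the tight Fenchel--Young identity $m^iL(x,w^i/m^i)=m^i[\lg D\phi^i,D_pH(x,D\phi^i)\rg-H(x,D\phi^i)]$. Assembling these four relations with signs $(-1)^{i+j}$ and regrouping by $m^1$ and $m^2$ then produces the energy inequality, the $H$-terms at $m^j$ with $j\neq i$ combining with those at $m^j$ with $j=i$ to form exactly the two convexity defects in the statement.

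The principal obstacle is precisely this two-stage limit passage in the cross-term pairings $\int m^{j,\delta}H(x,D\phi^{i,n})$ and $\int\lg D\phi^{i,n},w^{j,\delta}\rg$: weak convergences in $L^r$ and $L^1$ do not survive naive multiplication, and the mollification of $(m^j,w^j)$ is exactly what isolates bounded smooth factors against which the weakly convergent sequences $(D\phi^{i,n})$ and $(\alpha^{i,n})$ can be tested. A secondary delicate point is the choice of the boundary times $t_1,t_2$: since $m^j$ and $m^{j,\delta}$ are only $L^q$ in time, one must work on the full-measure set of Lebesgue points common to $m^j$, $m^{j,\delta}$, and the functionals $t\mapsto\int\phi^i(t,\cdot)m^j(t,\cdot)dx$, which is possible because the $m$-components of weak solutions admit a weakly time-continuous representative. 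Once these limits are under control, the final algebraic combination is purely formal.
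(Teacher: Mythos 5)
Your overall architecture — multiply the continuity equations by the $C^1$ approximations $\phi^{i,n}$ of the ``good'' solutions, use the subsolution inequality, and assemble the four pairings $\int\phi^i m^j$ with signs — is the same as the paper's, and the use of Remark~\ref{remdef} for the diagonal equalities is correct. However, there is a genuine gap in your two-stage limit procedure. After sending $n\to\infty$ at fixed $\delta$, you obtain an inequality whose right-hand side is $\int m^{j,\delta}H(x,D\phi^i)-\int m^{j,\delta}f(x,m^i)-\int\langle D\phi^i,w^{j,\delta}\rangle$, and you then want to send $\delta\to 0$ term by term. This does not work for the cross-term $i\neq j$: the hypotheses give only $D\phi^i\in L^r$, $m^j\in L^q$, and $|w^j|^{r'}/(m^j)^{r'-1}\in L^1$, which control neither $\int m^j H(x,D\phi^i)$ (that would need $D\phi^i\in L^{rp}$) nor $\int|\langle D\phi^i,w^j\rangle|$ (the exponents $r$ and $\frac{r'q}{r'+q-1}$ are not H\"older-conjugate). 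You explicitly decline to apply Fenchel--Young so as not to lose the convexity defect, but you also do not replace it with any other structure that bounds these terms; consequently the intermediate mixed-index quantities may be infinite and the final sign-weighted assembly is not justified.

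The paper sidesteps all of this. First, the $\delta$-mollification is unnecessary: $\phi^{2,n}\in C^1$ is already an admissible test function for the distributional continuity equation $\partial_t m^1+\mathrm{div}\,w^1=0$, using the weakly continuous-in-time representative of $m^1$. Second, and crucially, the paper combines the off-diagonal inequality \eqref{hbkljkzefhjA} with the diagonal equality \eqref{hbkljkzefhjB} \emph{at fixed $n$}, so that the integrand becomes the nonnegative convexity defect $m^1\bigl(H(x,D\phi^{2,n})-H(x,D\phi^1)-\langle D_pH(x,D\phi^1),D(\phi^{2,n}-\phi^1)\rangle\bigr)$. Because $D\phi^{2,n}$ is continuous (hence bounded) at each fixed $n$, every term in this combination is manifestly finite; one can then use that $\xi\mapsto m^1\bigl(H(x,\xi)-H(x,D\phi^1)-\langle D_pH(x,D\phi^1),\xi-D\phi^1\rangle\bigr)$ is convex and nonnegative to pass to the $n\to\infty$ weak $L^r$ limit by lower semicontinuity, without ever separating out $\int m^1 H(x,D\phi^{2,n})$ or $\int\langle D\phi^{2,n},w^1\rangle$. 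Finally, the $\alpha^{2,n}$ term is handled by writing $\int m^1(-\alpha^{2,n})=\int\bigl(F^*(x,\alpha^{2,n})+F(x,m^1)-m^1\alpha^{2,n}\bigr)-\int\bigl(F^*(x,\alpha^{2,n})+F(x,m^1)\bigr)$, where the first integrand is nonnegative and convex in $\alpha^{2,n}$ (so the $\liminf$ is one-sided) and the second converges precisely because ``good'' solutions satisfy $\int F^*(x,\alpha^{2,n})\to\int F^*(x,\alpha^2)$ — a subtlety that your proposal does not exploit but which is indispensable since $\alpha^{2,n}\rightharpoonup f(\cdot,m^2)$ only in $L^1$ while $m^1$ is merely $L^q$. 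If you drop the $\delta$-mollification, combine the two identities before passing $n\to\infty$, and adopt the $F^*+F$ trick for the coupling term, you recover the paper's argument.
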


Note that, as the right-hand side is well-defined, the left-hand side, which is nonnegative, converges: this is a part of the proposition. 
For smooth solutions, equality holds in the above inequality; we do not know if this is still the case for weak solutions. 

\begin{proof} Since   $(\phi^1,m^1)$ and $(\phi^2,m^2)$ are ``good" solutions, for $i=1,2$ we can approximate $(\phi^i,\alpha^i:= f(\cdot,m^i))$ by a sequence of maps  $(\phi^{i,n},\alpha^{i,n})$ such that $\phi^{i,n}$ is $C^1$ for each $n$, 
$(\phi^{i,n})$ is bounded in $BV$ and in $L^\infty([t,T]\times \bb{T}^d)$ for any $t\in (0,T)$, $(\phi^{i,n})$ converges to $\phi^i$ in $L^1$ while $(D\phi^{i,n})$ converge to $(D\phi^i)$ weakly in $L^r$, $(\alpha^{i,n}_+)$ is bounded in $L^p$, $(\alpha^{i,n})$ converges weakly in $L^1$ to $\alpha^i$ while 
$$
\int_0^T\int_{\bb{T}^d} F^*(x,\alpha^{i,n})dxdt \to \int_0^T\int_{\bb{T}^d} F^*(x,\alpha^i)dxdt
$$
and the following inequality holds a.e.:
\be\label{jhblhjhok}
-\partial_t \phi^{i,n}+H(x,D\phi^{i,n})\leq \alpha^{i,n}.
\ee

We multiply the equality $\partial_t m^1- {\rm div}(m^1 D_pH(x, D \phi^1))=0$ by $\phi^{2,n}$ and integrate on $(t_1,t_2)\times \T^d$:  
$$
\int_{t_1}^{t_2} \int_{\T^d} m^1 \partial_t \phi^{2,n}+\lg D\phi^{2,n}, m^1 D_pH(x, D \phi^1)\rg = \left[ \int_{\T^d} m^1(t)\phi^{2,n}(t)\right]_{t_1}^{t_2}. 
$$
By \eqref{jhblhjhok} for $i=2$: 
$$
\int_{t_1}^{t_2} \int_{\T^d} m^1 \partial_t \phi^{2,n}\geq \int_{t_1}^{t_2} \int_{\T^d} m^1( H(x,D\phi^{2,n})-\alpha^{2,n}), 
$$
(where, as $m^1\in L^q$ while $\alpha^{2,n}_+$ is in $L^p$, the left-hand side is well defined in $[-\infty,+\infty)$), so that 
\be\label{hbkljkzefhjA}
\int_{t_1}^{t_2} \int_{\T^d} m^1( H(x,D\phi^{2,n})-\alpha^{2,n}+\lg D\phi^{2,n}, D_pH(x, D \phi^1)\rg) \leq \left[ \int_{\T^d} m^1(t)\phi^{2,n}(t)\right]_{t_1}^{t_2}. 
\ee
Recalling Remark \ref{remdef}, we have, by \eqref{eq:ibp_weak} in the definition of weak solutions and the fact that inequalities \eqref{eq:integration_by_parts2} and \eqref{eq:integration_by_parts3} hold,  
\be \label{hbkljkzefhjB}
\int_{t_1}^{t_2} \int_{\bb{T}^d}m^1 \left( H(x,D\phi^1)-\lg D\phi^1, D_pH(x, D\phi^1)\rg -f(x,m^1)\right)
= \left[ \int_{\bb{T}^d} \phi^1 m^1 \right]_{t_1}^{t_2},
\ee
 for almost all $t_1<t_2$.
Combining \eqref{hbkljkzefhjA} with \eqref{hbkljkzefhjB} gives
\begin{multline}\label{hgbhjnnk}
\int_{t_1}^{t_2} \int_{\T^d}  m^1\left(H(x,D\phi^{2,n}) - H(x,D\phi^1)- \lg D_pH(x,D\phi^1) , D(\phi^{2,n}-\phi^1)\rg\right)\\
 +\int_{t_1}^{t_2} \int_{\T^d}  m^1\left( f(x,m^1)-\alpha^{2,n}\right)\leq   \left[\int_{\T^d} m^1(t)(\phi^{2,n}(t)-\phi^1(t))\right]_{t_1}^{t_2}.
\end{multline}
Our aim is now to let $n\to +\infty$ in \eqref{hgbhjnnk}: for the first integral, we use the fact that $D\phi^{2,n}$ weakly converges to $D\phi^2$ and that the map $\xi\to m^1\left(H(x,\xi) - H(x,D\phi^1)- \lg D_pH(x,D\phi^1) , \xi-D\phi^1)\right)$ is convex and nonnegative for any $x$ to get the inequality: 
\begin{multline*}
\int_{t_1}^{t_2} \int_{\T^d}  m^1\left(H(x,D\phi^{2}) - H(x,D\phi^1)- \lg D_pH(x,D\phi^1) , D(\phi^{2}-\phi^1)\rg\right) \\
\leq \liminf_n \int_{t_1}^{t_2} \int_{\T^d}  m^1\left(H(x,D\phi^{2,n}) - H(x,D\phi^1)- \lg D_pH(x,D\phi^1) , D(\phi^{2,n}-\phi^1)\rg\right)
\end{multline*}
For the second integral, we rewrite the last term as 
\be\label{jhebrehjj}
\int_{t_1}^{t_2} \int_{\T^d}  m^1\left(-\alpha^{2,n}\right) = 
\int_{t_1}^{t_2} \int_{\T^d}  \left(F^*(x,\alpha^{2,n})+F(x,m^1)-m^1\alpha^{2,n}\right) - \int_{t_1}^{t_2} \int_{\T^d} ( F^*(x,\alpha^{2,n})+F(x,m^1)).
\ee
The integrand in the first integral of the right-hand side is nonnegative and convex with respect to $\alpha^{2,n}$. As $(\alpha^{2,n})$ converges weakly in $L^1$ to $f(\cdot, m^2)$, we obtain the inequality 
$$
\int_{t_1}^{t_2} \int_{\T^d}  \left(F^*(x,f(x, m^2))+F(x,m^1)-m^1f(x, m^2)\right) 
\leq 
\liminf_n \int_{t_1}^{t_2} \int_{\T^d}  \left(F^*(x,\alpha^{2,n})+F(x,m^1)-m^1\alpha^{2,n}\right) 
$$
On another hand, we know by construction of $\alpha^{2,n}$ that the last integral in \eqref{jhebrehjj} converge as $n\to 0$ to $\ds \int_{t_1}^{t_2} \int_{\T^d} ( F^*(x,f(x, m^2))+F(x,m^1))$. Thus 
$$
\int_{t_1}^{t_2} \int_{\T^d}  m^1\left(-f(x, m^2)\right)\leq \liminf_n \int_{t_1}^{t_2} \int_{\T^d}  m^1\left(-\alpha^{2,n}\right). 
$$
We now consider the right-hand side of \eqref{hgbhjnnk}. Since the  $(\phi^{2,n})$ are uniformly bounded in $[t,T]$ for $t\in (0,T)$ and $(\phi^{2,n})$ converges a.e. to $\phi^{2}$, we have, for a.e. $t_1<t_2$,
$$
\lim_n  \left[\int_{\T^d} m^1(t)(\phi^{2,n}(t)-\phi^1(t))\right]_{t_1}^{t_2}
=  \left[\int_{\T^d} m^1(t)(\phi^{2}(t)-\phi^1(t))\right]_{t_1}^{t_2}.
$$
So, we can let $n\to \infty$ in \eqref{hgbhjnnk} to get, for a.e. $t_1<t_2$,
\begin{multline*}
\int_{t_1}^{t_2} \int_{\T^d}  m^1\left(H(x,D\phi^{2}) - H(x,D\phi^1)- \lg D_pH(x,D\phi^1) , D(\phi^{2}-\phi^1)\rg\right)\\
 +\int_{t_1}^{t_2} \int_{\T^d}  m^1\left( f(x,m^1)-f(x,m^2))\right)\leq   \left[\int_{\T^d} m^1(t)(\phi^{2}(t)-\phi^1(t))\right]_{t_1}^{t_2}.
\end{multline*}
Exchanging the roles of $(\phi^1,m^1)$ and $(\phi^2,m^2)$, we get 
\begin{multline*}
\int_{t_1}^{t_2} \int_{\T^d}  m^2\left(H(x,D\phi^{1}) - H(x,D\phi^2)- \lg D_pH(x,D\phi^2) , D(\phi^{1}-\phi^2)\rg\right)\\
 +\int_{t_1}^{t_2} \int_{\T^d}  m^2\left( f(x,m^2)-f(x,m^1))\right)\leq   \left[\int_{\T^d} m^2(t)(\phi^{1}(t)-\phi^2(t))\right]_{t_1}^{t_2}, 
\end{multline*}
which, added to the previous inequality gives the result. 
\end{proof}


\section{Analysis of the ergodic problem}\label{sec:2opti}

 The aim of this section is to define a notion of weak solution for the ergodic problem 
 \be\label{MFGergo}
\left\{\begin{array}{cl}
(i)& \overline \lambda +H(x,D\overline \phi) =f(x,\overline m(x))\\
(ii) & -{\rm div} (\overline mD_pH(x, D\overline\phi))=0\\
(iii)& \overline m\geq 0, \; \int_{\T^d} \overline m= 1
\end{array}\right.
\ee
and to show that this problem is well-posed. 
 
 \subsection{Well-posedness of the ergodic problem}
 
 Let us start with the definition of  solution of the ergodic problem \eqref{MFGergo}. \medskip
 
\begin{definition}\label{def:weaksolergoMFG} We say that a triple $(\lambda,  \phi,m )\in \R \times W^{1, pr}(\T^d)\times L^q(\T^d)$ is a weak solution of \eqref{MFGergo} if
\begin{itemize}
\item[(i)] $m\geq 0$, $\ds \int_{\T^d}m=1$ and $mD_pH(x,D\phi)\in L^{1}(\T^d)$, 
\item[(ii)] Equation \eqref{MFGergo}-(i) holds in the following sense:
\be\label{eq:aeergo}
\ds \quad \lambda +H(x,D\phi(x))= f(x,m(x)) \quad \; \mbox{\rm a.e. in $\{m>0\}$}
\ee
 and 
\be\label{eq:distribergo}
\quad \lambda +H(x,D\phi(x))\leq  f(x,m) \quad \mbox{\rm a.e. in}\; \T^d,  
\ee 

\item[(iii)] Equation \eqref{MFGergo}-(ii) holds:  
\be\label{eqcontdefergo}
\ds \quad {\rm div}( mD_pH(x,D\phi))= 0\quad {\rm in }\; \T^d, 
\ee
in the sense of distribution.
\end{itemize}
\end{definition}

Note that, if $(\lambda, m, \phi)$ is a solution, then  $\phi$ is H\"older continuous because, by \eqref{eq:conqr}, $pr>d$. \medskip

\begin{theorem}\label{theo:mainexergo} There exists at least one  solution $(\lambda,\phi,m)$ to the ergodic MFG system \eqref{MFGergo}. Moreover, the pair $(\lambda,m)$ is unique. 
\end{theorem}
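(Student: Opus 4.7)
The plan is to adapt the variational machinery of Sections 2 and 3 to the stationary setting. I would introduce a primal problem---minimize
\[
\bar{\s{A}}(\lambda,\phi,\alpha) := \int_{\bb{T}^d} F^*(x,\alpha(x))\,dx - \lambda
\]
over triples $(\lambda,\phi,\alpha) \in \R \times W^{1,r}(\bb{T}^d) \times L^1(\bb{T}^d)$ with $\alpha_+ \in L^p(\bb{T}^d)$ and $\lambda + H(x,D\phi) \leq \alpha$ in the sense of distributions---and a dual problem: minimize
\[
\bar{\s{B}}(m,w) := \int_{\bb{T}^d} \left(m\,L\!\left(x,\tfrac{w}{m}\right) + F(x,m)\right) dx
\]
over pairs $(m,w)$ with $m \geq 0$, $\int_{\bb{T}^d} m = 1$, and $-\mathrm{div}(w) = 0$ in the sense of distributions, under the convention \eqref{conventionH*}. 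A Fenchel--Rockafellar argument identical in structure to Theorem \ref{thm:dual} gives $\inf \bar{\s{A}} = -\min \bar{\s{B}}$; coercivity and strict convexity of $\bar{\s{B}}$ then yield by direct methods a unique minimizer $(\bar m,\bar w)$ with $\bar m \in L^q(\bb{T}^d)$ and $\bar w \in L^{r'q/(r'+q-1)}(\bb{T}^d;\R^d)$.

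The delicate point is existence for the primal. Given a minimizing sequence $(\lambda_n,\phi_n,\alpha_n)$, I would normalize $\int_{\bb{T}^d}\phi_n = 0$ and apply Lemma \ref{lem:upper_bound} to the time-independent extension $\tilde\phi_n(t,x) := \phi_n(x) - \lambda_n t$, which is a subsolution of $-\partial_t\tilde\phi_n + H(x,D\tilde\phi_n) \leq \alpha_n(x)$ on $(0,1)\times\bb{T}^d$. Specializing to $t_1=0$, $t_2=1$, $y=x$ gives the one-sided bound $\lambda_n \leq C(\|(\alpha_n)_+\|_p + 1)$; combining it with $\bar{\s{A}}(\lambda_n,\phi_n,\alpha_n) \leq C$ and the $L^p$-coercivity of $F^*$ from \eqref{eq:cost_growth_star} pins $\|(\alpha_n)_+\|_p$ down and bounds $\lambda_n$ on both sides. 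From there the chain of estimates in Steps 1--4 of the proof of Theorem \ref{thm:existence} transfers essentially verbatim: testing the HJ inequality against the constant function $1$ and invoking Kosygina's decomposition controls $\|(\alpha_n)_-\|_1$; \eqref{eq:hamiltonian_bounds} bounds $\|D\phi_n\|_r$; and the normalization $\int\phi_n=0$ plus Poincar\'e gives $W^{1,r}$-compactness of $\phi_n$. Passing to weak-$L^1$ limits as in Step 5 produces an admissible triple $(\bar\lambda,\bar\phi,\bar\alpha)$ realizing $\inf \bar{\s{A}}$.

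With both minimizers in hand, I follow Theorem \ref{thm:minimizers_weak}(i). The equality $\bar{\s{A}}(\bar\lambda,\bar\phi,\bar\alpha) + \bar{\s{B}}(\bar m,\bar w) = 0$, combined with the pointwise Fenchel inequalities $F^*(\cdot,\bar\alpha) + F(\cdot,\bar m) \geq \bar\alpha \bar m$ and $\bar m H(\cdot,D\bar\phi) + \bar m L(\cdot,\bar w/\bar m) \geq -\lg \bar w,D\bar\phi\rg$, together with a stationary analog of Lemma \ref{lem:integration_by_parts} (simpler here because there are no time-boundary terms), forces equality almost everywhere. This yields $\bar\alpha = f(\cdot,\bar m)$ a.e.\ and $\bar w = -\bar m\,D_pH(\cdot,D\bar\phi)$ on $\{\bar m > 0\}$, identifying $(\bar\lambda,\bar\phi,\bar m)$ as a weak solution in the sense of Definition \ref{def:weaksolergoMFG}.

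Uniqueness of $(\bar\lambda,\bar m)$ follows from a stationary analog of Proposition \ref{prop:energie}. Given two solutions $(\lambda_i,\phi_i,m_i)$, testing the difference of the HJ equations against $m_1-m_2$ (after the convolution regularization used in the proof of Proposition \ref{prop:energie}) and using the stationary continuity equations produces the familiar nonnegative energy identity: the $\lambda$-contributions cancel because $\int m_i = 1$, leaving the coupling term $\int(f(\cdot,m_1)-f(\cdot,m_2))(m_1-m_2)$ plus two $H$-convexity defects weighted by $m_1$ and $m_2$, all nonnegative, with sum $\leq 0$. Strict monotonicity of $f(x,\cdot)$ forces $m_1 = m_2 =: \bar m$. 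To conclude $\lambda_1 = \lambda_2$, I integrate \eqref{eq:aeergo} against $\bar m$ to obtain $\lambda_i = \int \bar m f(\cdot,\bar m) - \int \bar m H(\cdot,D\phi_i)$; the vanishing of the convexity defects together with testing the stationary continuity equation against $\phi_1-\phi_2$ (which yields $\int \lg D(\phi_1-\phi_2),\bar m\,D_pH(\cdot,D\phi_j)\rg = 0$) give $\int \bar m H(\cdot,D\phi_1) = \int \bar m H(\cdot,D\phi_2)$, hence $\lambda_1 = \lambda_2$. The principal obstacle in the whole scheme is the adaptation of Lemma \ref{lem:upper_bound} to the stationary setting to produce two-sided bounds on $\lambda_n$ and on the oscillation of $\phi_n$ along the minimizing sequence; the rest is a faithful translation of the arguments of Sections 2--3.
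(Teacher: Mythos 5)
Your proposal is correct and reaches the same conclusion as the paper, but it follows a noticeably different path at two places. On existence of the primal minimizer, the paper formulates the primal directly as minimizing $\int F^*(x,\lambda+H(x,D\phi))\,dx - \lambda$ over $\R\times W^{1,pr}(\T^d)$ and simply observes that existence is immediate from the coercivity of $F^*$ and $H$ (after normalizing $\int\phi=0$): indeed $\mathcal{A}(\lambda,\phi)\leq C$ forces $\lambda$ to lie in a compact interval and $D\phi$ to be bounded in $L^{pr}$, and lower semicontinuity is standard. You instead introduce a slack variable $\alpha\geq\lambda+H(x,D\phi)$ and rerun the full minimizing-sequence machinery of Theorem \ref{thm:existence}, invoking Lemma \ref{lem:upper_bound} through a time-independent extension $\phi(x)-\lambda t$ to bound $\lambda$ from above and the Kosygina decomposition to handle $(\alpha_n)_-$. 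That route works (the two formulations are equivalent because $F^*(x,\cdot)$ is nondecreasing), but it is considerably heavier than the stationary problem requires; the coercivity estimates give everything directly and there is no uniform-integrability issue since there is no time variable.

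On uniqueness of $(\lambda,m)$, the paper and your proposal genuinely diverge. The paper proves uniqueness of $(m,w)$ by strict convexity of the dual functional $\mathcal{B}$ (every weak solution corresponds, via Proposition \ref{theo:mainergo}, to the unique minimizer of the dual), and then recovers $\lambda$ from the a.e.\ identity $\lambda=\int_{\T^d}\bar m f(\cdot,\bar m)-\bar m H(\cdot,D\bar\phi)$. You instead run the Lasry--Lions energy argument directly on two candidate solutions, using $\int m_i\,a_i=0$, $a_i\leq 0$ where $a_i:=\lambda_i+H(\cdot,D\phi_i)-f(\cdot,m_i)$, the continuity equations tested against $\phi_1-\phi_2$ (justified because $D\phi_i\in L^{pr}$ and $m_i D_pH(\cdot,D\phi_i)\in L^{(pr)'}$), and strict monotonicity of $f$ to get $m_1=m_2$; the vanishing of the convexity defects then gives $\int\bar m H(\cdot,D\phi_1)=\int\bar m H(\cdot,D\phi_2)$ and hence $\lambda_1=\lambda_2$. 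This is a valid alternative and is closer in spirit to the uniqueness arguments used elsewhere in the paper for the time-dependent system. The paper's route is shorter because strict convexity of the dual handles both $m$ and $w$ at once and the converse direction of Proposition \ref{theo:mainergo} already packages the needed integration by parts; your route is more self-contained and does not require first establishing the full equivalence between weak solutions and dual minimizers, but you must still verify the admissibility of $\phi_1-\phi_2$ as a test function, which is automatic here only because $pr>d$ forces Hölder continuity of $\phi$.
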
 \medskip

In the following  example, given in \cite{lions07},  the solution can be computed explicitly: \\
\noindent {\bf An example:} Let us assume that $H(x,p)=\frac12|p|^2-V(x)$ and $f(x,0)=0$ for any $x\in \T^d$. Let 
$$
\xi(x,\lambda)= \left\{\begin{array}{ll}
f^{-1}(x,\lambda- V(x)) & {\rm if }\; \lambda\geq V(x)\\
0 & {\rm otherwise}
\end{array}\right.
$$
where $f^{-1}$ is the inverse of $f$ with respect to the second variable. As $f(x,0)=0$ and $f$ is strictly increasing and coercive with respect to the second variable, $\xi$ is nonnegative  and continuous in all variables, strictly increasing and coercive with respect to the second variable.  Moreover, $\xi(x,-s)=0$ for $s\geq \|V\|_\infty$. In particular, there is a unique real number $\rho$ such that $\int_{\T^d}  \xi(x,\rho)dx = 1$. As a consequence, if $( \bar \lambda, \bar \phi, \bar m)$ is a solution to \eqref{MFGergo}, then 
$$
\bar \lambda = \rho, \; 
\bar m(x)= \xi(x, \bar \lambda), \; D\bar \phi=0.
$$

\subsection{Proof of Theorem \ref{theo:mainexergo}}

The  basic strategy of proof is the same as (but much simpler than) for the time-dependent problem: we introduce two optimal control problems, which turn out to be in duality. The optimality conditions for these problems are precisely given by the ergodic MFG system. The main difference is the role played by the variable $\lambda$.

\subsubsection{Two optimization problems in duality}

We define, on $\overline{ \s{K}}_0:= \R\times W^{1, pr}(\T^d)$, the functional 
\be\label{DefmathcalA}
{\mathcal A}(\lambda,\phi)= \int_{\T^d}  F^*\left(x,\lambda+H(x,D\phi(x)) \right)\ dx - \lambda. 
\ee
Our first optimization problem is 
\be\label{PB:dual2ergo}
\inf_{(\lambda,\phi)\in \overline{ \s{K}}_0} \mathcal A(\lambda,\phi)
\ee

In view of the coercivity assumptions on $F^*$ and $H$ given in \eqref{eq:hamiltonian_bounds} and \eqref{eq:cost_growth_star}, it is clear that problem \eqref{PB:dual2ergo} has at least one solution $(\lambda,\phi)$.

To describe the second optimization problem, let us denote by $\overline{\s{K}}_1$ the set of pairs $(m, w)\in L^1(\T^d) \times L^1(\T^d;\R^d)$ such that $m(x)\geq 0$ a.e., $\ds \int_{\T^d}m(x)dx=1$, and which satisfy in the sense of distributions
\be\label{conteqergo}
{\rm div} (w)=0\; {\rm in}\;  \T^d. 
\ee
We define on $\overline{\s{K}}_1$ the functional
$$
{\mathcal B}(m,w)= \int_{\T^d} m(x) H^*\left(x, -\frac{w(x)}{m(x)}\right)+ F(x,m(x)) \ dx ,
$$
where we use the same convention as in \eqref{conventionH*}. 
Since $H^*$ and $F$ are bounded below and $m\geq 0$ a.e., the integral in ${\mathcal B}(m,w)$  is well defined in  $\R\cup\{+\infty\}$. 
The second optimal control problem is the following: 
\be\label{Pb:mw2ergo}
\inf_{(m,w)\in \overline{\s{K}}_1} \mathcal B(m,w)\;.
\ee

The following statement says that the two problems are in duality: \medskip

\begin{lemma}\label{Lem:dualiteergo} We have
\be\label{minmax}
\min_{(\lambda,\phi)\in \overline{ \s{K}}_0}{\mathcal A}(\lambda,\phi) = - \min_{(m,w)\in \overline{\s{K}}_1} {\mathcal B}(m,w), 
\ee
Moreover, the minimum in the right-hand side is achieved by a unique pair $(m,w)\in \overline{\s{K}}_1$ satisfying $(m,w)\in  L^q( \T^d)\times L^{\frac{r'q}{r'+q-1}}( \T^d)$. 
\end{lemma}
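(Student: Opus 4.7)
The plan is to proceed in direct analogy with the proof of Theorem \ref{thm:dual}, in three steps: Fenchel--Rockafellar duality, existence and integrability of a minimizer of $\mathcal{B}$, and uniqueness. Existence of a minimizer for $\mathcal{A}$ on $\overline{\s{K}}_0$ is already evident from the coercivity of $F^*$ and $H$, so the work focuses on $\mathcal{B}$.

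For the duality \eqref{minmax}, I would apply the Fenchel--Rockafellar theorem \cite{ekeland1976convex} on $\R \times C^1(\T^d)$, with the linear map $\Lambda(\lambda,\phi) := (\lambda, D\phi) \in C(\T^d) \times C(\T^d;\R^d)$, the linear functional $(\lambda,\phi) \mapsto -\lambda$, and the convex continuous functional
$$
(a,b) \longmapsto \int_{\T^d} F^*(x, a(x) + H(x, b(x))) \, dx.
$$
A direct computation of the Legendre transforms, using the substitution $c = a + H(x,b)$ to decouple $a$ and $b$ pointwise, shows that any admissible dual variable is a pair of absolutely continuous Radon measures $-\mu = m\,dx$ and $-\omega = w\,dx$ subject to $\mathrm{div}\,w = 0$ (emerging from the $D\phi$ component of $\Lambda$) and $\int_{\T^d} m = 1$ (emerging from the new scalar $\lambda$), and that the value on this constraint set is exactly $\mathcal{B}(m,w)$, with the convention \eqref{conventionH*} naturally handling $\{m = 0\}$. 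This yields \eqref{minmax}.

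For existence of a minimizer of $\mathcal{B}$ with the claimed integrability, take a minimizing sequence $(m_n, w_n) \in \overline{\s{K}}_1$. The growth bounds \eqref{eq:cost_growth} on $F$ and \eqref{eq:hamiltonian_conjugate_bounds} on $H^*$ give
$$
\mathcal{B}(m_n, w_n) \geq \frac{1}{qC}\|m_n\|_q^q + \frac{1}{r'C} \int_{\T^d} \frac{|w_n|^{r'}}{m_n^{r'-1}} \, dx - C,
$$
so $m_n$ is uniformly bounded in $L^q(\T^d)$ and $|w_n|^{r'}/m_n^{r'-1}$ in $L^1(\T^d)$. A H\"older inequality argument with exponents $r'/s$ and $r'/(r'-s)$ for $s := r'q/(r'+q-1)$, identical to the one carried out in the proof of Theorem \ref{thm:dual}, yields a uniform $L^s$ bound on $w_n$. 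Extracting a weakly converging subsequence in $L^q \times L^s$, the constraints $m \geq 0$, $\int m = 1$, $\mathrm{div}\,w = 0$ survive the weak limit, and since the integrands of $\mathcal{B}$ are convex, lsc and bounded below in $(m,w)$, the functional $\mathcal{B}$ is weakly lsc, so the weak limit is a minimizer with the asserted integrability. Uniqueness follows from the strict convexity of $F(x,\cdot)$ (since $f$ is strictly increasing) and of $w \mapsto m H^*(x, -w/m)$ for each fixed $m > 0$, applied to the midpoint of two alleged minimizers, with $w = 0$ on $\{m = 0\}$ enforced by \eqref{conventionH*}. I expect the main delicate point to be the sign bookkeeping in the Fenchel--Rockafellar step, where the scalar $\lambda$ must be correctly identified as the Lagrange multiplier enforcing $\int m = 1$; this is the principal structural difference from the time-dependent duality of Theorem \ref{thm:dual}, the rest being essentially a transcription.
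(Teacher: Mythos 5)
Your argument is essentially correct and lands on the same endpoints as the paper, but the route to the duality is genuinely different: you invoke the Fenchel--Rockafellar theorem directly on the primal functional $\mathcal{A}$ with $\Lambda(\lambda,\phi)=(\lambda,D\phi)$, exactly mirroring Theorem \ref{thm:dual}, whereas the paper goes the other way around, rewriting $\inf_{\overline{\s{K}}_1}\mathcal{B}$ as an unconstrained inf-sup (with $(\lambda,\phi)$ serving as Lagrange multipliers for $\int m=1$ and $\mathrm{div}\,w=0$), swapping inf and sup via the min-max theorem, and evaluating the inner infimum pointwise to produce $-F^*(x,\lambda+H(x,D\phi))$. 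Both rest on \cite{ekeland1976convex} and are in some sense the same theorem dressed differently, but your version has the advantage of delivering existence of the dual minimizer for free from the Fenchel--Rockafellar statement, while the paper's proof simply assumes a minimizer of $\mathcal{B}$ exists and proves regularity/uniqueness for it; your explicit direct-method argument (minimizing sequence, $L^q\times L^{r'q/(r'+q-1)}$ bounds, weak lower semicontinuity of the jointly convex lsc integrands) fills that gap cleanly and is a welcome addition. The $L^q$ and $L^{r'q/(r'+q-1)}$ estimates and the strict-convexity uniqueness step are identical to the paper's.

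One small point you elide: your Fenchel--Rockafellar computation is set up on $\R\times C^1(\T^d)$, so what you obtain a priori is $\inf_{\R\times C^1}\mathcal{A}=-\min_{\overline{\s{K}}_1}\mathcal{B}$, not the stated equality with $\min_{\overline{\s{K}}_0}$ where $\overline{\s{K}}_0=\R\times W^{1,pr}(\T^d)$. The paper closes this by observing that the minimization over $\R\times W^{1,pr}$ is the natural relaxation of that over $\R\times C^1$, with equality of the two infima (and attainment in $W^{1,pr}$) following from the coercivity of $F^*$ and the superlinear growth of $H$; you should record this last identification explicitly, otherwise the primal side of \eqref{minmax} is not quite the one the lemma asserts.
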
 \medskip

\begin{remark}{\rm Note that $\frac{r'q}{r'+q-1}>1$ because $r'>1$ and $q>1$. 
}\end{remark}

\begin{proof} Let us start with the regularity and the uniqueness of the solution of \eqref{Pb:mw2ergo}. Let $(m, w)\in \overline{\s{K}}_1$ be optimal in the above system. From the growth conditions \eqref{eq:hamiltonian_conjugate_bounds} and \eqref{eq:cost_growth}, we have 
$$
 C \; \geq \; \ds  \int_{\T^d}  F(x,m ) +m H^*( x, -\frac{w}{m})\ dx  \;   \geq \; \ds  \int_{\T^d}  \left(\frac{1}{ C}|m|^{q}+
 \frac{m}{ C } \left|\frac{w}{m}\right|^{r'} - C \right) dx.
 $$
In particular, $m\in L^q$.  By H\"older inequality, we also have 
$$
\int_{\T^d} |w|^{\frac{r'q}{r'+q-1}} = \int_{\{m>0\}} |w|^{\frac{r'q}{r'+q-1}} \leq \|m\|_q^{\frac{r'-1}{r'+q-1}} \left(\int_{\{m>0\}} \frac{|w|^{r'}}{m^{r'-1}} \right)^{\frac{q}{r'+q-1}} \leq C,
$$
so that $w\in L^{\frac{r'q}{r'+q-1}}$. Finally, we note that there is a unique minimizer to \eqref{Pb:mw2ergo}, because the set $\overline{\s{K}}_1$ is convex and the maps $F(x,\cdot)$ and $H^*(x,\cdot)$  are strictly convex: thus $m$ is unique and so is $\ds \frac{w}{m}$ in $\{m>0\}$. As $w=0$ in $\{m=0\}$, uniqueness of $w$ follows as well. 

Next we prove equality \eqref{minmax}. Let us rewrite problem \eqref{Pb:mw2ergo} as a min-max problem: 
$$
 \begin{array}{l}
 \ds \inf_{(m,w)\in \overline{\s{K}}_1} \mathcal B(m,w) \;  = \\ 
 \qquad \qquad \ds 
\inf_{(m,w)\in L^q\times L^{\frac{r'q}{r'+q-1}}}\sup_{(\lambda,\phi)\in \R\times C^1(\T^d)} \int_{\T^d}
\left(m H^*\left(x, -\frac{w}{m}\right)+ F(x,m)+\lg D\phi,w\rg-\lambda m \right) dx + \lambda.
\end{array}
$$
In view of the convexity and coercivity properties of $H^*$ and $F$, we can use the min-max theorem  (cf. e.g., \cite{ekeland1976convex}) to get:
$$
 \begin{array}{l}
 \ds \inf_{(m,w)\in \overline{\s{K}}_1} \mathcal B(m,w) \;  \\ 
\qquad =  \ds 
\sup_{(\lambda,\phi)\in \R\times C^1(\T^d)} \inf_{(m,w)\in L^q\times L^{\frac{r'q}{r'+q-1}}} \int_{\T^d}
\left(m H^*\left(x, -\frac{w}{m}\right)+ F(x,m)+\lg D\phi,w\rg-\lambda m \right) dx + \lambda \\
\qquad =  \ds 
\sup_{(\lambda,\phi)\in \R\times C^1}  \int_{\T^d} \inf_{(m,w)\in \R\times \R^d}
\left(m H^*\left(x, -\frac{w}{m}\right)+ F(x,m)+\lg D\phi,w\rg-\lambda m \right) dx + \lambda.
\end{array}
$$
An easy computation shows that 
$$
\inf_{(m,w)\in \R\times \R^d}
\left(m H^*\left(x, -\frac{w}{m}\right)+ F(x,m)+\lg D\phi,w\rg-\lambda m \right)
= - F^*(x, \lambda+H(x,D\phi))
$$
so that 
$$
 \begin{array}{l}
 \ds \inf_{(m,w)\in \overline{\s{K}}_1} \mathcal B(m,w) \;  = \;  \ds -
\inf_{(\lambda,\phi)\in \R\times C^1} \left(  \int_{\T^d}F^*(x, \lambda+H(x,D\phi)) dx - \lambda\right).
\end{array}
$$
This proves the claim since the natural relaxation of the minimization problem in the right-hand side is \eqref{PB:dual2ergo} by coercivity of $F^*$. 
\end{proof}

\subsubsection{Existence of solutions of the ergodic MFG problem}

We now show the existence part of Theorem \ref{theo:mainexergo}.
The proof is based on the one-to-one correspondence between solutions of the ergodic MFG system and the two optimization problems \eqref{PB:dual2ergo} and \eqref{Pb:mw2ergo}. \medskip

\begin{proposition}\label{theo:mainergo}
If $(m,w)\in \overline{\s{K}}_1$ is a minimizer of \eqref{Pb:mw2ergo} and $(\lambda, \phi)\in \overline{ \s{K}}_0$ is a minimizer of \eqref{PB:dual2ergo}, then $(m,\phi)$ is a solution of the mean field game system \eqref{MFGergo} and $w= -mD_pH(\cdot,D\phi)$ a.e.. 

Conversely, any  solution $(\lambda,\phi, m)$ of \eqref{MFGergo}  is such that the pair $(m,-mD_pH(\cdot,D\phi))$ is the minimizer of \eqref{Pb:mw2ergo} while $(\phi, f(\cdot,m))$ is a minimizer of \eqref{DefmathcalA}. 
\end{proposition}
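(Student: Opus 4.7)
My plan is to adapt the duality scheme used in Theorem \ref{thm:minimizers_weak} to the much simpler stationary setting. The starting point will be the duality identity from Lemma \ref{Lem:dualiteergo}: if $(\lambda,\phi)\in\overline{\s{K}}_0$ and $(m,w)\in\overline{\s{K}}_1$ are minimizers, then $\mathcal{A}(\lambda,\phi)+\mathcal{B}(m,w)=0$. Using $\int_{\T^d}m\,dx=1$ to absorb the $-\lambda$ into the integral, this rewrites as
\begin{equation*}
\int_{\T^d}\!\Bigl[F^*(x,\lambda+H(x,D\phi))+F(x,m)-m(\lambda+H(x,D\phi))\Bigr]dx+\int_{\T^d}\!\Bigl[mH(x,D\phi)+mH^*(x,-\tfrac{w}{m})+\lg D\phi,w\rg\Bigr]dx=0,
\end{equation*}
provided one knows the cross term $\int_{\T^d}\lg D\phi,w\rg\,dx$ vanishes.

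The first step will therefore be to justify this integration by parts. From the regularity in Lemma \ref{Lem:dualiteergo} we have $D\phi\in L^{pr}$ and $w\in L^{r'q/(r'+q-1)}$, and the conjugacy relations $1/p+1/q=1$, $1/r+1/r'=1$ combine to give $\tfrac{1}{pr}+\tfrac{r'+q-1}{r'q}=\tfrac{1}{q}+\tfrac{1}{p}\bigl(\tfrac{1}{r}+\tfrac{1}{r'}\bigr)=1$; so $\lg D\phi,w\rg\in L^1$ by H\"older, and since $\mathrm{div}\,w=0$ distributionally, a mollification of $w$ (together with continuity of the pairing in these norms) yields $\int_{\T^d}\lg D\phi,w\rg\,dx=0$. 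I expect this borderline H\"older check to be the only genuinely delicate point in the proof.

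With the identity in hand, both bracketed integrands are pointwise nonnegative: the first by the Fenchel--Young inequality for $(F,F^*)$ using $m\geq 0$, and the second by the Fenchel--Young inequality for $(H,H^*)$ after factoring out $m$ on $\{m>0\}$ (with convention \eqref{conventionH*} on $\{m=0\}$). Since their sum is zero, each must vanish a.e. The equality case for $H$ reads $w=-mD_pH(x,D\phi)$ a.e., which combined with \eqref{conteqergo} yields the continuity equation \eqref{eqcontdefergo}. The equality case for $F$ gives $\lambda+H(x,D\phi)=f(x,m)$ a.e. on $\{m>0\}$, which is \eqref{eq:aeergo}, and $\lambda+H(x,D\phi)\leq\lim_{s\to 0^+}f(x,s)$ a.e. on $\{m=0\}$, yielding \eqref{eq:distribergo} globally.

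The converse direction is the same argument run in reverse. Given a weak solution $(\lambda,\phi,m)$, set $w:=-mD_pH(\cdot,D\phi)$; then $(m,w)\in\overline{\s{K}}_1$ by \eqref{eqcontdefergo} and the integrability requirements in Definition \ref{def:weaksolergoMFG}, while \eqref{eq:aeergo}--\eqref{eq:distribergo} say precisely that the two Fenchel--Young inequalities above are saturated a.e. Re-running the computation with equalities in place of inequalities recovers $\mathcal{A}(\lambda,\phi)+\mathcal{B}(m,w)=0$, and Lemma \ref{Lem:dualiteergo} then forces each of $(\lambda,\phi)$ and $(m,w)$ to be a minimizer of its respective problem.
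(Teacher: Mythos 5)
Your forward direction is essentially the paper's argument: rearrange the duality identity $\mathcal{A}(\lambda,\phi)+\mathcal{B}(m,w)=0$ into two pointwise nonnegative Fenchel--Young gaps, pass the $-\lambda$ into the integral via $\int m=1$, kill the cross term $\int \lg D\phi,w\rg$ using $\mathrm{div}\,w=0$ and the pairing $L^{pr}$--$L^{r'q/(r'+q-1)}$, then read off \eqref{eq:aeergo}, \eqref{eq:distribergo}, and $w=-mD_pH(\cdot,D\phi)$ from the equality cases. Your exponent bookkeeping $\tfrac{1}{pr}+\tfrac{r'+q-1}{r'q}=1$ is correct, and you rightly flag the integration by parts (distributional $\mathrm{div}\,w=0$ tested against merely Sobolev $\phi$) as the step needing a mollification argument; the paper passes over this silently.

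Your converse is a genuinely different and cleaner route. The paper establishes optimality of $(m,w)$ and of $(\lambda,\phi)$ separately, by direct comparison with an arbitrary competitor $(m',w')$ (respectively $(\lambda',\phi')$), running the convexity/Fenchel inequalities, \eqref{eq:distribergo}, and the constraint $\mathrm{div}\,w'=0$ each time. You instead observe that the weak-solution relations are exactly the Fenchel--Young saturations, so that $\mathcal{A}(\lambda,\phi)+\mathcal{B}(m,w)=0$ holds directly, and then invoke Lemma~\ref{Lem:dualiteergo}: since $\mathcal{A}\geq\min\mathcal{A}$, $\mathcal{B}\geq\min\mathcal{B}$, and $\min\mathcal{A}+\min\mathcal{B}=0$, both must be minimizers. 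This avoids re-deriving the comparison inequalities and is tighter. One caveat applies equally to both proofs: to make $\mathcal{B}(m,w)$ finite and the pairing $\int\lg w,D\phi\rg=0$ legitimate in the converse direction, one needs the integrability of $\lg w,D\phi\rg$ where only $w\in L^1$ is guaranteed by Definition~\ref{def:weaksolergoMFG}; this is implicit in the paper's own computation (the line identifying $mH(x,D\phi)=\lg D_pH,D\phi\rg m-mH^*(\cdot,-w/m)$ and integrating) and is not a defect specific to your argument, but it would be worth a sentence acknowledging it as part of the definition of a weak solution.
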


\begin{proof} Let $(m,w)\in \overline{\s{K}}_1$ be a solution of \eqref{Pb:mw2ergo} and  let $(\lambda, \phi)\in {\mathcal K}_0$ be a solution of the  problem \eqref{Pb:mw2ergo}. Recall that $\ds mH^*(\cdot, -\frac{w}{m}) \in L^1$. From Lemma \ref{Lem:dualiteergo}, we have 
$$
0=  \int_{\T^d} m H^*(x, -\frac{w}{m})+ F(x,m)+ F^*(x,\lambda+H(x,D\phi))-\lambda\;.
$$
Since $m\in L^q$ while $H(\cdot,Du)\in L^p$,  we have by convexity of $F$, 
$$
\begin{array}{l}
\ds\int_{\T^d} m H^*(x, -\frac{w}{m})+ F(x,m)+ F^*(x,\lambda+H(x,D\phi))-\lambda \\
\qquad \qquad 
\ds\geq \; \int_{\T^d} m\left( H^*(x, -\frac{w}{m})+ \lambda+H(x,D\phi)\right)-\lambda.
\end{array}
$$
We now use the fact that $\int m=1$ and that  $D\phi\in L^{pr}$ while $w\in L^{(r'q)/(r'+q-1)}= (L^{pr})'$ to infer that 
$$
\begin{array}{l}
\ds \int_{\T^d} m\left( H^*(x, -\frac{w}{m})+ \lambda+H(x,D\phi)\right)-\lambda\geq 
\ds - \int_{\T^d} \lg w, D\phi\rg =0,
\end{array}
$$
where the last inequality comes from the constraint ${\rm div} (w)=0$. 
Since equality holds in the above string of inequalities, one must have 
$$
F(x,m)+ F^*(x,\lambda+H(x,D\phi)) =m(x)(\lambda+H(x,D\phi)) \qquad {\rm a.e.,}
$$
and 
$$
m(x)\left( H^*(x, -\frac{w(x)}{m(x)})+H(x,D\phi(x))\right)= -\lg w(x),D\phi(x)\rg \qquad {\rm a.e..}
$$
The first equality implies that $\lambda+H(x,D\phi(x)))=f(x,m(x))$ for almost every $x$ such that $m(x)>0$ since $F(x,\cdot)$ is strictly convex and smooth on $(0,+\infty)$. On $\{m=0\}$, one has $\lambda+H(x,Du(x)))\in \partial_mF(x,0)$. So, if $f(x,0)=-\infty$, then $m>0$ a.e. because $\lambda+H(x,Du(x)))$ is integrable. If $f(x,0)>-\infty$, then $\lambda+H(x,Du(x)))\leq f(x,0)$ in $\{m=0\}$ because $\partial_mF(x,0)= (-\infty, f(x,0)]$. So  \eqref{eq:distribergo} holds. 
The second inequality entails that $w(x)=-m(x)D_pH(x,D\phi(x))$ a.e. In particular, $mD_pH(\cdot,D\phi)\in L^{\frac{r'q}{r'+q-1}}(\T^d)$. So $(\lambda, \phi, m)$ is a solution to the ergodic problem. \\

Let us now assume that $(m,\phi)$ is a solution of  \eqref{MFGergo} and set $w=-mD_pH(x,D\phi)$. Then $(m,w)$ belongs to $\overline{\s{K}}_1$ and $(\lambda,\phi)\in \overline{ \s{K}}_0$. We first prove that $(m,w)$ is optimal for \eqref{Pb:mw2ergo}.  Let $(m',w')\in \overline{\s{K}}_1$ be an admissible pair. Without loss of generality we can assume that $m'H^*(x,-\frac{w'}{m'})\in L^1$ and $m'\in L^q$, because otherwise ${\mathcal B}(m',w')=+\infty$. Then, by convexity of $F$ with respect to the second variable, we have: 
$$
\begin{array}{rl}
\ds {\mathcal B}(m',w')\; =& \ds \int_{\T^d} m' H^*(x, -\frac{w'}{m'})+ F(x,m')\\
\geq &\ds \ds \int_{\T^d} m' H^*(x, -\frac{w'}{m'})+ F(x,m) +  f(x,m)(m'-m).
 \end{array}
 $$
By \eqref{eq:distribergo} and the fact that $\ds \int_{\T^d} (m'-m)=0$:
$$
\begin{array}{rl}
\ds {\mathcal B}(m',w') \;  \geq &  \ds \int_{\T^d} m' H^*(x, -\frac{w'}{m'})+ F(x,m) + (\lambda+H(x,D\phi))(m'-m) \\
 \geq & 
\ds \int_{\T^d} m' ( H^*(x, -\frac{w'}{m'})+H(x,D\phi)) + F(x,m)-H(x,D\phi)m  \\
 \geq & \ds \int_{\T^d}  -\lg w',D\phi\rg + F(x,m)-H(x,D\phi)m.
 \end{array}
 $$
Recall that ${\rm div} (w')=0$, so that $\ds \ds \int_{\T^d}  \lg w',D\phi\rg=0$. Moreover,  equality $w=-mD_pH(x,D\phi)$ implies that $H(x,D\phi)= \lg D_pH(x,D\phi),D\phi\rg-H^*(x,-w/m)$. So we finally  obtain 
$$
\begin{array}{rl}
\ds \int_{\T^d}  -\lg w',D\phi\rg + F(x,m)-H(x,D\phi)m\; =  & \ds  \int_{\T^d} F(x,m)-m \lg D_pH(x,D\phi),D\phi\rg+mH^*(x,-w/m)\\
= & \ds \int_{\T^d} F(x,m)+mH^*(x,-w/m),
 \end{array}
 $$
because $m D_pH(x,D\phi)=-w$ and  ${\rm div} (w)=0$. This proves the optimality of $(m,w)$. \\
 
The argument for proving the optimality of $(\lambda,\phi)$ is similar: let $(\lambda',\phi')\in {\mathcal K}_0$ be another admissible pair.  Then, since $m\in \partial_\alpha   F^*(x,\lambda+H(x,D\phi))$ a.e. because of \eqref{eq:aeergo} and \eqref{eq:distribergo}, 
$$
\begin{array}{rl}
\ds \mathcal A(\phi',\lambda')\; = & \ds \int_{\T^d}  F^*(x,\lambda'+H(x,D\phi'))-\lambda'  \\ \geq &
\ds \int_{\T^d}  F^*(x,\lambda+H(x,D\phi)) + m(\lambda'-\lambda+ \lg D_pH(x,D\phi), D(\phi'-\phi)\rg) -\lambda'.
 \end{array}
$$
We use the fact that $\int_{\T^d}m=1$ while  $m D_pH(x,D\phi)=-w$ and  ${\rm div} (w)=0$ to infer immediately that 
$$
\int_{\T^d}   m(\lambda'-\lambda+ \lg D_pH(x,D\phi), D(\phi'-\phi)\rg)= \lambda'-\lambda,
$$
which shows the optimality of $(\lambda, \phi)$. 
\end{proof}

\subsubsection{Uniqueness of the solution of the MFG ergodic system}

We conclude the section by proving the uniqueness part of Theorem \eqref{theo:mainexergo}. Following Proposition \ref{theo:mainergo}, we know that, if $(\lambda,\phi, m)$ is a solution to \eqref{MFGergo}, then $(m,w)$ (where $w=-mD_pH(\cdot,D\phi)$) is a solution of the minimization problem \eqref{Pb:mw2ergo}. By strict convexity of this latter problem, the pair $(m,w)$ is unique. Since, as explained in the proof of Proposition \ref{theo:mainergo}, 
 $mH(x,D\phi)= -\lg D_pH(x,D\phi),w\rg-mH^*(x,-w/m)$, we have by \eqref{eq:aeergo}:
 $$
 \lambda m- \lg w, D\phi\rg -mH^*(x,-w/m)= mf(x,m) \; {\rm a.e.}, 
 $$
 which, after integration over $\T^d$ gives, since ${\rm div}(w)=0$, 
 $$
 \lambda= \int_{\T^d} mH^*(x,-w/m)- mf(x,m).
$$
In particular, $\lambda$ is unique because the pair $(m,w)$ is unique.


\section{Asymptotic behavior}\label{sec:asymp}

In this section, we consider the asymptotic average of the solution of the finite time horizon mean field game system as time tends to infinity. For this we fix a terminal condition $\phi_f:\bb{T}^d\to \R$  for the mean field game system. We assume that $\phi_f$ is $C^1$ on $\bb{T}^d$. 

Let $(\phi^T, m^T)$ be a ``good" solution of the finite time horizon mean field game system on $(0,T)\times \T^d$
\begin{equation} \label{MFG}
\left\{\begin{array}{l}-\partial_t\phi + H(x,D\phi) = f(x,m)  \\
\partial_t m - \mathrm{div} \left(mD_p H(x,D\phi)\right) = 0 \\
\phi(T,x) = \phi_f(x), m(0,x) = m_0(x).
\end{array}\right.
\end{equation}
and $(\bar \lambda, \bar \phi,\bar m)$ be a solution of the ergodic mean field game system \eqref{MFGergo}. Let us define the rescaled functions 
$$
\psi^T(s,x)= \phi^T(sT,x), \qquad \mu^T(s,x)= m^T(sT,x)\qquad \forall (s,x)\in (0,1)\times \T^d. 
$$

Our main result is the following. \medskip
\begin{theorem}\label{thm:CvTheo} As $T\to+\infty$, 
\begin{itemize}
\item $(\mu^T)$ converges to $\bar m$ in $L^\theta((0,1)\times \T^d)$ for any $\theta\in [1,q)$, 
\item $\psi^T/T$ converges to the map $s\to \bar \lambda(1-s)$ in $L^\theta((\delta,1)\times \T^d)$ for any $\theta\geq 1$ and any $\delta\in (0,1)$. 
\end{itemize}
\end{theorem}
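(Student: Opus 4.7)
The plan is to follow the two-step strategy announced at the end of Section \ref{sec:introduction}. I will first deduce $\mu^T\to\bar m$ from the energy inequality of Proposition \ref{prop:energie}, and then pass to the limit in the optimization problem \ref{pr:relaxed} characterizing $\phi^T$ to upgrade this into the convergence of $\psi^T/T$.

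For the $m$-convergence, I apply Proposition \ref{prop:energie} with $(\phi^1,m^1):=(\phi^T,m^T)$ and $(\phi^2,m^2):=(\tilde\phi^T,\bar m)$, where $\tilde\phi^T(t,x):=\bar\phi(x)+\bar\lambda(T-t)$. A preliminary check (mollifying $\bar\phi$ in the $x$-variable to fit Definition \ref{def:good}) shows that $(\tilde\phi^T,\bar m)$ is a good weak solution of \eqref{MFG} on $[0,T]$ with initial datum $\bar m$ and terminal datum $\bar\phi$. Taking $t_1=0$, $t_2=T$, the right-hand side of the energy inequality reads
$$\int_{\T^d}(\bar m-m^T(T))(\bar\phi-\phi_f)\,dx \;-\; \int_{\T^d}(\bar m-m_0)\bigl(\bar\phi+\bar\lambda T-\phi^T(0,\cdot)\bigr)\,dx,$$
and the crucial point is that the linear-in-$T$ contribution $\bar\lambda T$ disappears because $\int(\bar m-m_0)=0$. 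What remains depends on $\phi^T(0,\cdot)$ only modulo a constant, and its oscillation is $o(T)$ by combining Lemma \ref{lem:upper_bound} with the estimate $\|\alpha^T_+\|_{L^p}=O(T^{1/q'})$ coming from $\|m^T\|_{L^q}=O(T^{1/q})$. Since the left-hand side of the energy inequality is nonnegative, the strict monotonicity of $f(x,\cdot)$ together with its growth \eqref{eq:coupling_growth} yields $\int_0^T\int_{\T^d}|m^T-\bar m|^q\,dx\,dt=o(T)$. Rescaling $s=t/T$ gives $\mu^T\to\bar m$ in $L^q((0,1)\times\T^d)$, hence in $L^\theta$ for every $\theta\in[1,q)$ after interpolation with the uniform $L^q$-bound on $\mu^T$.

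For the $\phi$-convergence I pass to the limit in the optimization. The first task is to prove $\frac{1}{T}\inf_{\s K}\s A\to\bar{\s A}_{\rm erg}$, where $\bar{\s A}_{\rm erg}$ denotes the value of \eqref{PB:dual2ergo}. The upper bound comes from evaluating $\s A$ on the competitor $\phi(t,x):=\bar\phi_n(x)+\bar\lambda(T-t)$ (with $\bar\phi_n\in C^1$ approximating $\bar\phi$), patched on $[T-1,T]$ to meet the terminal condition $\phi_f$; the lower bound follows from duality together with a competitor $(m^\star,w^\star)\in\s K_1$ for Problem \ref{pr:dual} that transitions $m_0\to\bar m$ on $[0,1]$ and then sits on $(\bar m,-\bar m D_pH(\cdot,D\bar\phi))$ on $[1,T]$, giving $\s B(m^\star,w^\star)=-(T-1)\bar{\s A}_{\rm erg}+O(1)$. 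Since $(\phi^T,\alpha^T)$ with $\alpha^T:=f(\cdot,m^T)$ is a minimizer, the convergence $\frac{1}{T}\int_0^T\int F^*(x,\alpha^T)\,dx\,dt\to\int F^*(x,f(x,\bar m))\,dx$ (from Step~1 and the convexity of $F^*$) identifies
$$\frac{1}{T}\int_{\T^d}\phi^T(0,x)m_0(x)\,dx \;\longrightarrow\; \int_{\T^d}F^*(x,f(x,\bar m))\,dx-\bar{\s A}_{\rm erg} \;=\; \bar\lambda.$$
The same analysis on the translated horizon $[sT,T]$ (whose initial datum $m^T(sT,\cdot)\to\bar m$ by Step~1) yields $\frac{1}{T}\int_{\T^d}\phi^T(sT,x)\bar m(x)\,dx\to(1-s)\bar\lambda$ for a.e.\ $s\in(0,1)$. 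To upgrade this to $L^\theta((\delta,1)\times\T^d)$ convergence, I note that the rescaled gradient satisfies $\|D_x(\psi^T/T)\|_{L^r((0,1)\times\T^d)}^r=T^{-r-1}\|D\phi^T\|_{L^r((0,T)\times\T^d)}^r=O(T^{-r})$, using the bound $\|D\phi^T\|_{L^r((0,T)\times\T^d)}^r=O(T)$ that follows from the HJ equation and the $L^1$ bound on $\alpha^T$. Combined with the $L^\infty$ bound on $\psi^T/T$ on $(\delta,1)\times\T^d$ for each $\delta>0$ provided by Lemma \ref{lem:upper_bound}, Poincar\'e's inequality on $\T^d$ gives that $\psi^T/T$ coincides, up to $L^r$-errors tending to $0$, with its spatial mean, and the previous integral limit identifies this spatial mean as $\bar\lambda(1-s)$; interpolation with the $L^\infty$ bound then yields convergence in $L^\theta$ for every $\theta\geq 1$.

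The main obstacle is the somewhat circular bookkeeping between the two steps: the $o(T)$ oscillation of $\phi^T(0,\cdot)$ used in Step~1 is not an immediate consequence of Lemma \ref{lem:upper_bound} alone, and the cleanest route is to establish it a posteriori using the value-function analysis of Step~2 (or alternatively via a modified energy inequality integrated over $[t,t+1]$ in the interior). A secondary difficulty lies in constructing the dual competitor $(m^\star,w^\star)$ on $[0,1]$ with transport cost uniformly bounded in $T$: this is where the standing hypothesis $m_0>0$ on $\T^d$ plays a decisive role, ensuring the transition from $m_0$ to $\bar m$ can be realized at finite $\s B$-cost.
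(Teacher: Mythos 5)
Your high-level strategy matches the paper's: use the energy inequality of Proposition~\ref{prop:energie} for the $m$-convergence and pass to the limit in the variational problem for the $\phi/T$-convergence. However, both steps as you have written them contain genuine gaps, and the places where you acknowledge difficulties are exactly the places where the paper introduces a mechanism you have not reproduced.

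For the first bullet, taking $t_1=0$, $t_2=T$ in the energy inequality does not work, and your attempted fix is not correct. First, Proposition~\ref{prop:energie} holds only for \emph{almost every} $t_1<t_2$, so one cannot literally insert $t_1=0$. More substantively, the boundary term $\int_{\T^d}(\bar m-m_0)\phi^T(0,\cdot)$ is not $o(T)$ under the available estimates: Lemma~\ref{lem:upper_bound} with $\|\alpha^T_+\|_p=O(T^{1/p})$ only gives an \emph{upper} bound $\phi^T(0,\cdot)\le \phi_f+O(T^{1/q+1/p})=O(T)$, and there is no corresponding lower oscillation bound at $t=0$, where $\phi^T$ need not even be in $L^\infty$ (Definition~\ref{def:relaxed_space} only gives $\phi\in L^\infty([t,T]\times\T^d)$ for $t>0$). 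The paper circumvents this precisely by never touching $t=0$ or $t=T$: from the $L^r$-bound $\int_0^T\int|D\phi^T|^r\le CT$ and the $L^q$-bound on $m^T$, it selects by Chebyshev two interior times $s_T\in[1,T^\beta]$ and $t_T\in[T-T^\beta,T]$ at which both $\|D\phi^T(s_T)\|_r$ and $\|m^T(s_T)\|_q$ are $O(T^{(1-\beta)/r})$ and $O(T^{(1-\beta)/q})$ respectively, and then combines the $L^\infty$-bound $\|\phi^T\|_{L^\infty([1,T]\times\T^d)}\le CT$ (Lemma~\ref{lem:phiTbd}) with Poincar\'e to control the boundary terms at $s_T,t_T$. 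Choosing $\beta$ so that $\frac{1-\beta}{q}+\frac{1-\beta}{r}+\frac{(p-r)_+}{p}<1$ makes these terms $o(1)$ after division by $T$. Your "modified energy inequality integrated over $[t,t+1]$" gestures at this, but the actual mechanism of the proof is the Chebyshev selection of a single good time near each endpoint, not an averaged version of the inequality.

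For the second bullet, two links in your chain are unsupported. First, the claim $\frac1T\int_0^T\int F^*(x,\alpha^T)\to\int F^*(x,f(x,\bar m))$ does not follow from "Step~1 and convexity of $F^*$": convexity gives the $\liminf$ inequality, but since $F^*(x,f(x,m))\sim |m|^q$ while Step~1 only produces convergence of $\mu^T$ in $L^\theta$ for $\theta<q$, you cannot conclude the $\limsup$; the paper never uses this convergence and instead derives the lower bound for $\gamma^T_s$ directly from $-\partial_t\phi^T+H(x,D\phi^T)\le\alpha^T$, the monotonicity of $F^*(x,\cdot)$, and convexity of $(a,b)\mapsto F^*(x,a+H(x,b))$ applied to the time-averages $\frac{1}{T-s_T}\int_{s_T}^T$. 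Second, the dual competitor $(m^\star,w^\star)$ transitioning $m_0\to\bar m$ on $[0,1]$ is not guaranteed to have finite $\s B$-cost: with the affine interpolation $m(t)=(1-t)m_0+t\bar m$ the transport term $\int_0^1\int |w|^{r'}/m^{r'-1}$ diverges when $\bar m$ vanishes on a set of positive measure and $r'\ge 2$, which is compatible with the standing assumption~\eqref{eq:conqr}; the hypothesis $m_0>0$ is not by itself sufficient. The paper avoids constructing such a dual competitor entirely: it only needs the \emph{upper} bound $\gamma^T_s\le(1-s)\gamma_\infty+o(1)$, which comes from a primal competitor $(\phi,\alpha)$ built from $\bar\phi$ and $\phi_f$ on $[sT,T]$, and the lower bound is obtained by extracting a weak limit of the time-averaged, normalized $\phi^T$ and invoking the uniqueness of the ergodic constant, not by matching the dual side.
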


The proof is based, for the convergence for $\mu^T$, on the energy estimate given in Proposition \ref{prop:energie} and, for the convergence for $\psi^T/T$, on a passage to the limit in the optimization problem \eqref{pr:relaxed}.

\subsection{Energy estimates and the convergence of $m^T$}

Let us first provide various estimates on the solution of the time dependent system: \medskip
\begin{lemma}\label{lem:phiTbd} There is a constant $C$ such that, for any $T\geq 1$,  
$$
\int_0^T\int_{\T^d} (m^T(t,x))^q+(\alpha^T(t,x))_+^p+ |D\phi^T(t,x)|^r\ dxdt+ \|\phi^T\|_{L^\infty([1,T]\times \T^d)} \leq CT,
$$
where $\alpha^T(t,x)=f(x,m^T(t,x))$. 
\end{lemma}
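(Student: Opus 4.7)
The plan is to exploit the variational characterisation of $(\phi^T,m^T)$: by Theorem \ref{thm:minimizers_weak}(ii), the pair $(\phi^T,\alpha^T)$ (with $\alpha^T := f(\cdot,m^T)$) minimises the relaxed problem \ref{pr:relaxed}. I would first produce a competitor built from the ergodic solution to show $\mathcal A(\phi^T,\alpha^T) \leq CT$, then re-run the estimates of the proof of Theorem \ref{thm:existence} while tracking the $T$-dependence. Concretely, let $(\bar\lambda,\bar\phi,\bar m)$ be the ergodic triple of Theorem \ref{theo:mainexergo}, and set
$$\tilde\phi(t,x) := \bar\phi(x) + \bar\lambda(T-t) + c_0, \qquad \tilde\alpha(t,x) := f(x,\bar m(x)),$$
with $c_0 := \min_{\T^d}(\phi_f - \bar\phi)$, finite because $\bar\phi \in W^{1,pr} \hookrightarrow C(\T^d)$ (since $pr > d$). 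The ergodic equation \eqref{eq:distribergo} ensures $-\partial_t\tilde\phi + H(x,D\tilde\phi) \leq \tilde\alpha$ a.e., and the growth of $f$ applied to $\bar m \in L^q$ places $\tilde\alpha$ in $L^p$, so $(\tilde\phi,\tilde\alpha) \in \mathcal K$. A direct computation gives $\mathcal A(\tilde\phi,\tilde\alpha) \leq CT$.

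Next I would replay Step 1 of the proof of Theorem \ref{thm:existence} verbatim, with the bound $\mathcal A \leq C$ replaced by $\mathcal A \leq CT$. Applying Lemma \ref{lem:upper_bound} at $(t_1,t_2)=(0,T)$ with $y=x$ gives $\phi^T(0,x) \leq \phi_f(x) + C\,T^{1/q}(\|(\alpha^T)_+\|_p+1)$, and combining this with the lower bound $F^*(x,a)\geq \frac{1}{pC}|a|^p-C$ for $a\geq 0$ (and the trick of testing $(\alpha^T)_-$ against $m_0 \geq 1/C$ to control the negative contribution) the inequality $\mathcal A(\phi^T,\alpha^T) \leq CT$ becomes
$$CT \;\geq\; \tfrac{1}{pC}\|(\alpha^T)_+\|_p^p - C\,T^{1/q}\|(\alpha^T)_+\|_p - CT.$$
Because the conjugate exponent of $p$ equals $q$, the identity $(T^{1/q})^{q}=T$ lets Young's inequality absorb the cross term, yielding $\|(\alpha^T)_+\|_p^p \leq CT$. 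To control $\|m^T\|_q$, I would invoke Remark \ref{remdef}: the equalities in \eqref{eq:integration_by_parts2}–\eqref{eq:integration_by_parts3} with $\alpha=f(\cdot,m^T)$, together with the Fenchel identity $F^*(x,f(x,m))+F(x,m)=m\,f(x,m)$, rewrite as
$$\iint m^T L(\cdot,w^T/m^T) + F(\cdot,m^T) + F^*(\cdot,f(\cdot,m^T))\,dxdt = \int \phi^T(0)\,m_0 - \int \phi_f\,m^T(T),$$
where $w^T = -m^T D_pH(\cdot,D\phi^T)$. The RHS is $\leq CT$ via the pointwise upper bound on $\phi^T(0)$ from Lemma \ref{lem:upper_bound} and the bound on $(\alpha^T)_+$ already obtained; coercivity \eqref{eq:cost_growth} of $F$ then yields $\|m^T\|_q^q \leq CT$. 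For $\|D\phi^T\|_r$, I would integrate $-\partial_t\phi^{T,n} + H(x,D\phi^{T,n}) \leq \alpha^{T,n}$ on the ``good'' approximation over $[0,T]\times\T^d$, pass to the limit, and estimate $\|\alpha^T\|_1 \leq CT$ using H\"older for $(\alpha^T)_+$ and \eqref{eq:alpha_minus} tested against $m_0$ for $(\alpha^T)_-$; the coercivity \eqref{eq:hamiltonian_bounds} of $H$ finally gives $\|D\phi^T\|_r^r \leq CT$.

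For the $L^\infty$ bound on $[1,T]\times\T^d$, the pointwise upper bound $\phi^T(t,x) \leq CT$ follows directly from Lemma \ref{lem:upper_bound} with $(t_1,t_2,y) = (t,T,x)$. For the lower bound I would apply the same lemma with $(t_1,t_2)=(0,t)$, $t\geq 1$, and integrate in $y$ against $m_0$: since $m_0$ is bounded and $t^{1-r'} \leq 1$ for $t\geq 1$,
$$\int \phi^T(0,y)m_0(y)\,dy \leq \phi^T(t,x) + C\,t^{1-r'} + C\,T^{1/q}(\|(\alpha^T)_+\|_p+1) \leq \phi^T(t,x) + CT.$$
Combined with the estimate $\int\phi^T(0)m_0 \geq -CT$ (which comes from $\mathcal A(\phi^T,\alpha^T)\leq CT$ and the lower bound on $F^*$), this yields $\phi^T(t,x) \geq -CT$. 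The main obstacle I expect is the exponent arithmetic in Step 2: the exact identity $(T^{1/q})^{q}=T$ is what keeps the bounds linear in $T$, and the conceptual input is the equality version of the integration-by-parts formula (Corollary \ref{cor:egalite}/Remark \ref{remdef}), which converts the one-sided Hamilton–Jacobi inequality into the identity needed to extract $\|m^T\|_q$.
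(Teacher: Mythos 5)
Your proof is correct, but it takes a genuinely different route than the paper's. Two structural differences stand out. First, to bound $\|m^T\|_q$ the paper simply plugs the competitor $(m_0,0)\in\overline{\mathcal K}_1$ into the dual problem \eqref{pr:dual}, which gives $\mathcal B(m^T,w^T)\leq\mathcal B(m_0,0)\leq CT$ and hence $\iint (m^T)^q\leq CT$ immediately, with no reference to $\phi^T$ at all; you instead reach this bound by invoking Remark \ref{remdef} (the equality in the integration-by-parts formula) together with the Fenchel identity $F^*(x,f(x,m))+F(x,m)=mf(x,m)$, which is correct but more circuitous and requires the bound on $(\alpha^T)_+$ before it can be closed, whereas the paper's $m^T$ bound is obtained up front and independently. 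Second, for the relaxed-problem side the paper uses the elementary competitor $(\phi_f,H(\cdot,D\phi_f))$, which keeps the lemma self-contained, while you build the competitor from the ergodic triple $(\bar\lambda,\bar\phi,\bar m)$; this is valid (and aesthetically anticipates the asymptotic result), but it imports the existence theory of Section \ref{sec:2opti} into a lemma that the paper proves without it. A smaller difference: to extract the bound on $(\alpha^T)_+$ the paper does not go through Lemma \ref{lem:upper_bound} at this stage, but rather integrates the pointwise HJ inequality in time (its inequality \eqref{lbznlegnl}, then \eqref{kabelmkjn}--\eqref{kbsdflknln}), and eliminates the $(\alpha^T)_-$ and $(\phi^T(0))_-$ terms by a careful linear combination; you instead feed Lemma \ref{lem:upper_bound} back into the $\mathcal A\leq CT$ inequality and absorb the cross term $T^{1/q}\|(\alpha^T)_+\|_p$ by Young's inequality. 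Both routes close, and you correctly identified the exponent arithmetic $(T^{1/q})^q=T$ as the reason the bound stays linear in $T$; the paper's variant of this same arithmetic lives in the cancellation scheme of \eqref{iykdczjhd}--\eqref{kbsdflknln}. Both approaches finish the $L^\infty$ estimate via Lemma \ref{lem:upper_bound} in essentially the same way.
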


\begin{proof} Let $w^T := -m^T D_pH(x,D\phi^T)$ and $\alpha^T:= f(x,m^T)$.
Using $(m,w):=(m_0, 0)\in {\mathcal K}_1$ as a competitor in the dual problem \eqref{pr:dual}, we see that, as $(m^T,w^T)$ is an optimum,
$$
\begin{array}{l}
\ds  \int_0^T\int_{\T^d} m^T(t,x) H^*\left(x, -\frac{w^T(t,x)}{m^T(t,x)}\right)+ F(x,m^T(t,x)) \ dxdt + \int_{\T^d} \phi_f(x)m^T(T,x)dx \\
\qquad \qquad \ds \leq  \int_0^T\int_{\T^d} m_0(x) H^*(x, 0)+ F(x,m_0(x)) \ dxdt + \int_{\T^d} \phi_f(x)m_0(x)dx\; \leq \; CT.
\end{array}
$$
As $\phi_f$ is bounded, $H^*$ satisfies (\ref{eq:hamiltonian_conjugate_bounds}), and $F$ satisfies \eqref{eq:cost_growth}, we infer the bound on $m^T$. 

Using the pair $(\phi,\alpha) := (\phi_f,H(\cdot,D\phi_f))\in \s{K}$ as a competitor in the relaxed problem \eqref{pr:relaxed}, we have, by optimality of $(\phi^T,\alpha^T)$, 
$$
\int_0^T\int_{\bb{T}^d} F^*(x,\alpha^T)-\int_{\bb{T}^d} m_0\phi^T(0) \leq 
\int_0^T\int_{\bb{T}^d} F^*(x,H(\cdot,D\phi_f))-\int_{\bb{T}^d} m_0\phi_f \leq CT.
$$
Fix $\ep>0$ to be chosen later and set $K_\ep= \sup_{\bb{T}^d} F(\cdot,\ep)$. Then $F^*(x,a)\geq \ep a -K_\ep$. So, in view of the growth of $F^*$, the above inequality implies
\be\label{iykdczjhd}
\int_0^T\int_{\bb{T}^d}  \frac{1}{C} |\alpha^T_+|^p -\ep |\alpha^T_-|-\int_{\bb{T}^d} m_0(\phi^T(0))_+ + \int_{\bb{T}^d} m_0(\phi^T(0))_-\leq CT.
\ee
Since $\phi^T$ satisfies the HJ inequality: $-\partial_t \phi^T+H(x,D\phi^T)\leq \alpha^T$ with $\phi^T(T)\leq \phi_f$, we have, for almost every $x\in \bb{T}^d$, 
\be\label{lbznlegnl}
\phi^T(0,x)+\int_0^T H(x,D\phi^T)dt \leq \int_0^T \alpha^T(t,x)dt+ \phi_f(x).
\ee
We integrate in space, using the growth of $H$ and the bound of $\phi_f$, and get
$$
\frac{1}{C}\int_0^T\int_{\bb{T}^d} |D\phi^T|^r\leq \int_0^T\int_{\bb{T}^d} \alpha^T- \int_{\T^d}\phi^T(0)+CT. 
$$
As  $m_0\geq 1/C_0$,  this inequality can be rewritten as: 
\be\label{kabelmkjn}
\frac{1}{C}\int_0^T\int_{\bb{T}^d} |D\phi^T|^r \leq \int_0^T\int_{\bb{T}^d} \alpha^T_+ -\int_0^T\int_{\bb{T}^d} \alpha^T_-+ C_0\int_{\T^d}m_0(\phi^T(0))_-+CT. 
\ee
Coming back to \eqref{lbznlegnl} that we integrate over $\phi^T(0)\geq 0$, we have from the lower bound on $H$, the regularity of $\phi_f$ and the assumption $m_0\leq C_0$, 
\be\label{kbsdflknln}
\frac{1}{C_0} \int_{\bb{T}^d} m_0(\phi^T(0))_+ \leq \int_{\bb{T}^d} (\phi^T(0))_+ \leq \int_0^T\int_{\bb{T}^d} \alpha^T_++CT.
\ee
 Putting together \eqref{iykdczjhd} and the above inequality: 
 $$
\int_0^T\int_{\bb{T}^d}  \frac{1}{C} |\alpha^T_+|^p -C_0 |\alpha^T_+| -\ep |\alpha^T_-| + \int_{\bb{T}^d} m_0(\phi^T(0))_-\leq CT.
$$
 Choosing $\ep=1/C_0$, we divide \eqref{kabelmkjn} by $C_0$ and add to the above inequality: 
 $$
\int_0^T\int_{\bb{T}^d} \frac{1}{C} \left(|D\phi^T|^r+  |\alpha^T_+|^p\right) -C |\alpha^T_+|  \leq  CT. 
 $$
This shows the desired bound on $D\phi^T$ and on $\alpha^T_+$. In view of \eqref{kbsdflknln}, these estimates imply that $m_0(\phi^T(0))_+$---and thus $(\phi^T(0))_+$---are bounded by $CT$ in $L^1$. Then we use \eqref{iykdczjhd} for $\ep>0$ small enough combined to \eqref{kabelmkjn} to get an $L^1$ bound for $m_0(\phi^T(0))_-$ and $(\phi^T(0))_-$. 

We now show the $L^\infty$ bound on $\phi^T$. Lemma \ref{lem:upper_bound} immediately implies that $\phi^T$ is bounded above by $CT$ 
(here as in the rest of the proof, we actually use the fact that we work with ``good" solutions, and we first apply the Lemma at the level of the regular approximating maps and then pass to the limit). For the lower bound, we first note that  for almost all $x,y\in \T^d$,
$$
-|\phi^T(0,x)|\leq \phi^T(0,x) \leq \phi^T(1,y) + C|x-y|^{r'} + C(\|\alpha^T_+\|_p+1). 
$$
We integrate this inequality over $x\in \bb{T}^d$ to obtain the uniform lower bound: $\phi^T(1,y)\geq -CT$.  Then using again Lemma \ref{lem:upper_bound}, we obtain, for almost all $x\in \T^d$, 
$$
-CT\leq  \phi^T(1,x) \leq  \phi^T(t,x) +C T,
$$
which shows the uniform estimate. 
\end{proof}

We are now ready to prove the convergence of $m^T$. 

\begin{proof}[Proof of Theorem \ref{thm:CvTheo}: convergence of $m^T$.] Let us fix $\beta\in (0,1)$, to be chosen later. Using Lemma \ref{lem:phiTbd} we find that there exists, for any $T>0$, two times $s_T\in [1,T^\beta]$ and $t_T\in [T-T^\beta, T]$ such that 
\be\label{khjsqbdfkjter}
\int_{\T^d} |D\phi^T(s_T,x)|^r+(m^T(s_T,x))^p+ |D\phi^T(t_T,x)|^r+(m^T(t_T,x))^p\ dx \leq C T^{1-\beta}
\ee
and for which the energy inequality given in Proposition \ref{prop:energie} and applied to the solutions $(\phi^T,m^T)$ and $(\bar \phi-\bar \lambda t, \bar m)$ holds: 
\be\label{ljkqbsdlgvj}
\frac{1}{T}\int_{s_T}^{t_T} \int_{\T^d} (m^T-\bar m)(f(m^T)-f(\bar m)) \leq \frac{1}{T}\left[\int_{\T^d} (m^T(t)-\overline m) (\phi^T(t)-\overline \phi)\ dx\right]_{s_T}^{t_T}.
\ee
where we have used  that $\int_{\T^d} \bar \lambda t (m^T(t)-\overline m)=0$ in the right-hand side of the equality. 
Let us set $\lg \phi^T(s_T)\rg=\int_{\bb{T}^d}\phi^T(s_T)$. At $t=s_T$, we have 
$$
\begin{array}{l}
\ds \frac{1}{T}\left|\int_{\T^d} (m^T(s_T)-\overline m) (\phi^T(s_T)-\overline \phi)\ dx\right| \\
\qquad \qquad = \ds \frac1T\left|\int_{\T^d} (m^T(s_T)-\overline m) (\phi^T(s_T)-\lg \phi^T(s_T)\rg-\overline \phi)\ dx\right| \\
\qquad \qquad \leq \ds \frac1T(\|m^T(s_T)\|_q+ \|\overline m\|_q) (\|\phi^T(s_T)-\lg \phi^T(s_T)\rg\|_p+ \|\bar \phi\|_p) .
\end{array}
$$
Using \eqref{khjsqbdfkjter}, the bound on $\|\phi^T\|_\infty$ in Lemma \ref{lem:phiTbd} and Poincar\'e inequality, we obtain
$$
\begin{array}{l}
\ds \frac1T (\|m^T(s_T)\|_q+ \|\overline m\|_q) (\|\phi^T(s_T)-\lg \phi^T(s_T)\rg\|_p+ \|\bar \phi\|_p) \\
\qquad \qquad \ds  \leq  \ds   CT^{\frac{1-\beta}{q}-1}  \left(\|\phi^T(s_T)-\lg \phi^T(s_T)\rg\|_\infty^{\frac{(p-r)_+}{p}}\|\phi^T(s_T)-\lg \phi^T(s_T)\rg\|_r+ C\right)\\
\qquad \qquad \ds   \leq   \ds   CT^{\frac{1-\beta}{q}-1} \left( T^{\frac{(p-r)_+}{p}}\|D\phi^T(s_T)\|_r+1\right) \;\leq \; CT^{\frac{1-\beta}{q}-1+\frac{(p-r)_+}{p}+\frac{1-\beta}{r}}
\end{array}
  $$
 We can argue in a similar way for the term $t=t_T$ to get
 $$
  \frac{1}{T}\left|\int_{\T^d} (m^T(t_T)-\overline m) (\phi^T(t_T)-\overline \phi)\ dx\right| \leq CT^{\frac{1-\beta}{q}-1+\frac{(p-r)_+}{p}+\frac{1-\beta}{r}}.
  $$ 
  Let us choose $\beta\in (0,1)$ such that $\ds \frac{1-\beta}{p}+\frac{1-\beta}{r}+\frac{(p-r)_+}{p}<1$. 
  Then \eqref{ljkqbsdlgvj} implies, after scaling and taking into account the above estimates:
  $$
  \limsup_{T\to+\infty}  \int_{T^{\beta-1}}^{1-T^{\beta-1}} \int_{\T^d} (\mu^T-\bar m)(f(x,\mu^T)-f(x,\bar m)) \ dxds =0.
  $$
We can then conclude the proof by using that $f$ is strictly increasing with respect to the second variable and that the $(\mu^T)$ are bounded in $L^q$. 
  \end{proof}

\subsection{Averaged limit of the variational problems and convergence of $\phi^T/T$} 

The convergence of $(\phi^T)$ is proved by letting $T\to \infty$ in the (averaged) optimization problem \eqref{pr:relaxed}. 

\begin{proof}[Proof of Theorem \ref{thm:CvTheo}: the convergence of $\phi^T/T$.]  Fix a subsequence $T\to+\infty$ (again denoted by $T$) and $s\in (0,1)$ such that $\mu^T(s)$ converges to $\overline m$ in $L^{\theta}(\T^d)$ for any $\theta\in [1, q)$. 

We first establish an estimate on the negative part $(\partial_t \phi^T)_-$ of the measure $\partial_t \phi^T$. Note that, as $- \partial_t \phi^T +H(x,D\phi^T) \leq \alpha^T(t,x)$, where $H(\cdot,D\phi^T)$ and $\alpha^T$ are integrable, $(\partial_t\phi^T)_-$ is absolutely continuous with respect to the Lebesgue measure, with 
$$
0\leq (\partial_t \phi^T)_- \leq \alpha^T(t,x)-H(x,D\phi^T) \leq  \alpha^T(t,x)+C\qquad {\rm a.e.},
$$
since $H$ is bounded below. We have therefore 
$$
\int_0^T\int_{\T^d} (\partial_t \phi^T)_-^p \leq C\int_0^T\int_{\T^d}(\alpha^T_+)^p + CT,
$$
so that,  by the bound on $\alpha^T$ given in Lemma \ref{lem:phiTbd}, 
\be\label{ineq:dspsi}
\int_0^T\int_{\T^d} (\partial_t \phi^T)_-^p\ dxdt \leq CT. 
\ee
Next we need to define a specific time $s_T$ near $sT$ at which $D\phi^T$ and $m^T$ are not too large. For this we use Lemma \ref{lem:phiTbd}, which says that 
$$
\int_0^T \int_{\T^d} |D\phi^T|^r+(m^T)^q\ dx \leq C T.
$$
Then, for any  $\beta\in (0,1)$ to be chosen below, there exists  a time $s_T\in [sT,sT+T^\beta]$ such that 
\be\label{khjsqbdfkjbis}
\int_{\T^d} |D\phi^T(s_T)|^r+(m^T(s_T,x))^q\ dx \leq C T^{1-\beta}.
\ee

By standard dynamic programming property the pair $(\phi^T, \alpha^T)$ is optimal for the (relaxed) problem of optimal control of HJ equation defined on the time horizon $[Ts,T]$ with initial measure $m^T(sT)$: 
$$
\begin{array}{l}
\ds  \inf_{(\phi,\alpha)\in {\mathcal K}} \int_{sT}^T\int_{\T^d}  F^*(x,\alpha(x,t))\ dxdt -\int_{\T^d} \phi(sT,x)m^T(sT,x)\ dx\\
\qquad \qquad \ds =  \int_{sT}^T\int_{\T^d}  F^*(x,\alpha^T(x,t))\ dxdt -\int_{\T^d} \phi^T(sT,x)m^T(sT,x)\ dx.
\end{array}
$$
Let $\gamma^T_s$ be the value of this problem divided by $T$. We claim that 
\be\label{lhbfidulh}
\gamma^T_s\leq (1-s)\gamma_\infty+ o(1),
\ee
where $\gamma_\infty$ is the value of the Problem \eqref{PB:dual2ergo} and  $o(1)\to0$ as $T\to+\infty$. Indeed, let us define 
$$
\phi(t,x)=\left\{\begin{array}{ll}
(T-t)\bar \phi(x)+ (t-T+1)\phi_f(x) & {\rm if }\; t\in [T-1,T]\\
\bar \phi(x)-\bar \lambda(t-T+1) & {\rm if } \; t\in [sT,T-1]
\end{array}\right.
$$
and
$$
\alpha(t,x)= 
\left\{\begin{array}{ll}
\left(\bar \phi(x)-\phi_f(x) + H(x, (T-t)D\bar \phi(x)+ (t-T+1)D\phi_f(x))\right)\vee 1 & {\rm if }\; t\in [T-1,T]\\
\bar \lambda + H(x,D\bar \phi(x))& {\rm if } \; t\in [sT,T-1]
\end{array}\right.
$$
Then the pair $(\phi,\alpha)$ belongs to ${\mathcal K}$ and, by definition of $\gamma^T_s$, 
$$
\gamma^T_s \leq \frac{1}{T} \int_{sT}^T\int_{\T^d} F^*(x,\alpha) - \frac{1}{T}\int_{\T^d} \phi(sT,x)m^T(sT,x)dx.
$$
As $\bar \phi$ and $\phi_f$ belong to $W^{1,rp}(\T^d)$, the right-hand side is bounded above by 
$$
(1-s) \int_{\T^d} F^*(x,\bar \lambda+H(x,\bar \phi))dx -(1-s)\bar \lambda+ \frac{C}{T}=(1-s)\gamma_\infty+ \frac{C}{T},
$$
since $(\bar \lambda,\bar \phi)$ is optimal in \eqref{DefmathcalA}. This implies \eqref{lhbfidulh}. In particular, if we set 
$$
\delta^T = \frac{1}{T}\int_{sT}^{s_T} \int_{\T^d}  F^*(x,\alpha^T(t,x))\ dxdt ,
$$
then 
\be\label{jhbfvoedln}
\begin{array}{l}
\ds (1-s)\gamma_\infty+ o(1) \\ 
\qquad \qquad \geq  \ds \frac1T\int_{s_T}^T\int_{\T^d}  F^*(x,\alpha^T(t,x))\ dxdt +\delta^T -\frac1T\int_{\T^d} \phi^T(sT,x)m^T(sT,x)\ dx \\
\qquad \qquad  \geq \ds (1-\frac{s_T}{T})\int_{\T^d}  F^*\left(x,\frac{1}{T-s_T}\int_{s_T}^T \alpha^T(t,x)\ dt\right)\ dx +\delta^T  -\frac1T\int_{\T^d} \phi^T(sT,x)\overline m(x)\ dx +o(1)
\end{array}
\ee
where the last inequality comes, for the first term, by convexity of $F^*$ and, for the last one, from the fact that $\|\phi^T\|_\infty$ is bounded by $CT$ on $[1,T]$ and $(m^T(sT))$ converges to $\overline m$ in $L^{1}$. 
Since $-\partial_t\phi^T+H(x,D\phi^T)\leq \alpha^T$ with $\phi^T(T,x)\leq\phi_f(x)$, we have for a.e. $x\in \T^d$, 
$$
\frac{1}{T-s_T}\int_{s_T}^T \alpha^T(t,x)\ dt \geq \frac{\phi^T(s_T,x)-\phi_f(x)}{T-s_T} +\frac{1}{T-s_T}\int_{s_T}^T H(x,D \phi^T)dt. 
$$
Let us now estimate $\delta^T$. We have, for any $\ep>0$ (recalling the inequality $F^*(x,a)\geq \ep a-K_\ep$),
$$
\begin{array}{rl}
\ds \delta^T \; \geq  & \ds   \frac{\ep}{T}\int_{sT}^{s_T} \int_{\T^d}  \alpha^T(t,x)\ dxdt -  \frac{K_\ep(s_T-sT)}{T})\\
\geq & \ds  \frac{\ep}{T}\left( \int_{\T^d} (\phi^T(sT,x)-\phi^T(s_T,x))dx + \int_{sT}^{s_T} \int_{\T^d}H(x,D\phi^T) dxdt\right) -K_\ep T^{\beta-1},
\end{array}
$$
where $\phi^T$ is bounded by $CT$ on $[1,T]$ and $H$ is bounded below. Thus
$$
\delta^T \geq -C\ep - (K_\ep+C\ep) T^{\beta-1},
$$
which implies that $\delta^T \geq o(1)$ as $T\to+\infty$. Collecting the above estimates and coming  back to \eqref{jhbfvoedln}, we obtain, as $F^*(x,\cdot)$ is nondecreasing,  
\be\label{kjhebflfn}
\begin{array}{l}
\ds(1-s)\gamma_\infty+o(1)\\ 
\qquad \geq  \ds (1-\frac{s_T}{T})
\int_{\T^d}  F^*\left(x,\frac{\phi^T(s_T,x)-\phi_f(x)}{T-s_T} +\frac{1}{T-s_T}\int_{s_T}^T H(x,D \phi^T)dt \right)\ dt\\
\qquad \qquad \ds  -\frac1T \int_{\T^d} \phi^T(sT,x)\bar m(x)\ dx.
\end{array}
\ee
We now estimate the last term: 
$$
\begin{array}{rl}
\ds -\frac1T \int_{\T^d} \phi^T(sT,x)\bar m(x)\ dx\;  = & \ds  -\frac1T \int_{\T^d} \phi^T(s_T,x)\bar m(x)\ dx +
\frac1T  \int_{sT}^{s_T} \int_{\T^d}  \partial_t \phi^T(s,x) \bar m(x)\ dxds \\
\geq &  \ds -\frac1T \lg \phi^T(s_T)\rg- \frac{\|\bar m\|_{q}}{T} \|\phi^T(s_T)-\lg \phi^T(s_T)\rg\|_p \\
& \ds \qquad  - \frac1T  \int_{sT}^{s_T} \int_{\T^d}  (\partial_t \phi^T(s,x))_- \bar m(x)\ dxds , 
\end{array}
$$
where, by the bound $\|\phi^T\|_\infty\leq CT$, Poincar\'e inequality and \eqref{khjsqbdfkjbis},  
$$
\begin{array}{rl}
\ds \frac{\|\bar m\|_{q}}{T} \|\phi^T(s_T)-\lg \phi^T(s_T)\rg\|_p \; \leq & \ds \frac{C}{T} \|\phi^T\|_\infty^{\frac{(p-r)_+}{p}}\|\phi^T(s_T)-\lg \phi^T(s_T)\rg\|_r\\ 
\leq & \ds CT^{\frac{(p-r)_+}{p}-1} \|D\phi^T(s_T)\|_r \; \leq\;  C T^{\frac{(p-r)_+}{p}+\frac{1-\beta}{r}-1},
\end{array}
$$
while, from H\"{o}lder inequality and \eqref{ineq:dspsi}:  
$$
\begin{array}{rl}
\ds \frac1T  \int_{sT}^{s_T} \int_{\T^d}  (\partial_t \phi^T(s,x))_-\bar m(x)\ dxds \;  \leq  & \ds \frac{\|\bar m\|_q}{T} (s_T-sT)^{1/q} \|  (\partial_t \phi^T)_-\|_p \leq C T^{\frac{\beta}{q}+\frac{1}{p}-1}.
\end{array} 
$$ 
So \eqref{kjhebflfn} becomes 
\be\label{MJLKJB}
\begin{array}{rl}
\ds(1-s)\gamma_\infty+o(1)\;  \geq   & \ds (1-\frac{s_T}{T})
\int_{\T^d}  F^*\left(x,-\frac{C}{T} +\frac{\phi^T(s_T,x)}{T-s_T}  +\frac{1}{T-s_T}\int_{s_T}^T H(x,D \phi^T)dt\right)\ dx\\
 & \qquad \qquad \ds   -\frac1T \lg \phi^T(s_T)\rg - C (T^{\frac{(p-r)_+}{p}+\frac{1-\beta}{r}-1}+T^{\frac{\beta}{q}+\frac{1}{p}-1}).
\end{array}
\ee
Note that, because of \eqref{khjsqbdfkjbis} and the $L^\infty$ bound on $\phi^T$, $\ds -\frac{1}{T-s_T} \phi^T(s_T,\cdot)$ converges, up to a subsequence, to a constant $\lambda$ in $L^r(\T^d)$.  In particular, $-\frac{1}{(1-s)T} \lg \phi^T(s_T)\rg$ converges also to $\lambda$. On the other hand, if we set 
$$
\overline \phi^T(x)= \frac{1}{T-s_T}\int_{s_T}^T (\phi^T(t,x)-\lg \phi^T(t)\rg) dt, 
$$
then we have by Lemma \ref{lem:phiTbd},
$$
\int_{\T^d} |D\overline \phi^T(x)|^rdx\leq C\qquad {\rm and }\qquad \int_{\T^d}\overline \phi^T(x)dx=0, 
$$
so that (again up to a subsequence), $\overline \phi^T$ converges weakly to a map $\phi$ in $W^{1,r}(\T^d)$. 
We rewrite \eqref{MJLKJB} by using the above notation and the  convexity of $H$:
$$
\begin{array}{rl}
\ds(1-s)\gamma_\infty+o(1)\;  \geq   & \ds (1-\frac{s_T}{T})
\int_{\T^d}  F^*\left(x,-\frac{C}{T} +\frac{\phi^T(s_T,x)}{T-s_T}  +H(x,D\overline \phi^T)\right)\ dx  \\
 & \qquad \qquad \ds -\frac1T \lg \phi^T(s_T)\rg - C (T^{\frac{(p-r)_+}{p}+\frac{1-\beta}{r}-1}+T^{\frac{\beta}{q}+\frac{1}{p}-1}).
\end{array}
$$
Letting $T\to+\infty$, we get by convexity of the map $(a,b)\to F^*(x, a+H(x,b))$ and choosing $\beta$ such that 
$\frac{(p-r)_+}{p}+\frac{1-\beta}{r}-1<0$, 
$$
\begin{array}{rl}
\ds(1-s)\gamma_\infty\;  \geq   & \ds (1-s)
\int_{\T^d}  F^*(x,\lambda +H(x,D\phi))\ dx -(1-s)\lambda. 
\end{array}
$$
Therefore the pair $(\lambda, \phi)$ is a solution to the optimization problem \eqref{PB:dual2ergo}. By uniqueness of the ergodic constant, we get $\lambda=\overline \lambda$. By definition of $\phi^T$ and \eqref{khjsqbdfkjbis}, we have therefore 
$$
\lim_{T\to+\infty} \frac{1}{T} \lg \phi^T(s_T,\cdot)\rg = -\overline \lambda. 
$$
Inequality $-\partial \phi^T+H(x,D\phi^T)\leq \alpha^T$ implies, since $H$ is bounded below: 
\be\label{jhgjqccjjx}
\lg \phi^T(s_T)\rg -\lg \phi^T(sT)\rg \leq C(s_T-sT) + (s_T-sT)^{1/q} \|\alpha^T_+\|_p\leq CT^{\beta/q+1/p} ,
\ee
where we have used the bound $\|\alpha^T_+\|_p\leq CT^{1/p}$ in the last inequality. 
This shows that 
$$
\liminf_{T\to+\infty} \frac{1}{T} \lg \phi^T(sT)\rg \geq \liminf_{T\to+\infty} \lg \phi^T(s_T)\rg -CT^{\beta/q+1/p-1} \geq 
 -(1-s)\overline \lambda.
$$
To obtain the opposite inequality, we just need to pick a time $s_T\in [sT-T^\beta,sT]$ for which \eqref{khjsqbdfkjbis} holds and prove as above  that 
$(\lg \phi^T(s_T)\rg/T)$ converges to $-(1-s)\overline \lambda$. Since, as for  \eqref{jhgjqccjjx},  we have 
$$
\lg \phi^T(sT)\rg -\lg \phi^T(s_T)\rg \leq CT^{\beta/q+1/p}, 
$$
we can then prove that 
$$
\limsup_{T\to+\infty} \frac{1}{T} \lg \phi^T(sT,\cdot)\rg \leq 
-(1-s)\overline \lambda.
$$
This shows the convergence of $(\lg \phi^T(\cdot)\rg/T)$ to $-(1-\cdot)\overline \lambda$  in $L^\theta(0,1)$ for any $\theta\geq 1$. Then,  by Poincar\'e inequality and the bound on $\|D\phi^T\|_r$,  
$$
\begin{array}{rl}
\ds \|\frac{\phi^T}{T}+(1-\cdot)\overline \lambda\|_{L^r((0,T)\times \T^d)}\; \leq & \ds \|\frac{\phi^T}{T}-\frac{\lg \phi^T\rg}{T}\|_r+\|\frac{\lg \phi^T\rg}{T}+(1-\cdot)\overline \lambda\|_r \\
\leq & \ds CT^{-1/r'}+ \|\lg \phi^T\rg/T+(1-\cdot)\overline \lambda\|_r
\end{array}
$$
 where the right-hand side vanishes as $T\to+\infty$. As $(\phi^T/T)$ is bounded in $[1,T]$, we conclude that the map $(\psi^T/T)$ converges to the map $s\to -(1-s)\bar \lambda$ in $L^\theta((\delta,1)\times \T^d)$ for any $\theta\geq 1$ and any $\delta\in (0,1)$. 
\end{proof}


\bibliographystyle{amsalpha}

\newcommand{\etalchar}[1]{$^{#1}$}
\providecommand{\bysame}{\leavevmode\hbox to3em{\hrulefill}\thinspace}
\providecommand{\MR}{\relax\ifhmode\unskip\space\fi MR }
\providecommand{\MRhref}[2]{%
  \href{http://www.ams.org/mathscinet-getitem?mr=#1}{#2}
}
\providecommand{\href}[2]{#2}

\end{document}